\newtheorem{theorem}{Theorem}
\newtheorem{corollary}[theorem]{Corollary}
\newtheorem{proposition}[theorem]{Proposition}
\newtheorem{lemma}[theorem]{Lemma}
\theoremstyle{remark}
\newtheorem{remark}[theorem]{Remark}
\newtheorem{example}[theorem]{Example}
\theoremstyle{definition}
\newtheorem{definition}[theorem]{Definition}
\numberwithin{equation}{subsection}
\DeclareMathOperator{\Symm}{\mathtt{S}}
\DeclareMathOperator{\Hom}{Hom}
\DeclareMathOperator{\CA}{\mathcal{CA}}
\DeclareMathOperator{\ECA}{\mathcal{ECA}}
\DeclareMathOperator{\DirStr}{Dir}
\DeclareMathOperator{\Desc}{Desc}
\DeclareMathOperator{\coker}{coker}
\DeclareMathOperator{\ann}{ann}
\DeclareMathOperator{\Cour}{\mathtt{Q}}
\DeclareMathOperator{\Man}{\mathfrak{Man}}
\DeclareMathOperator{\DGMan}{\mathrm{dg}\mathfrak{Man}}
\newcommand{\opdp}{\dotplus}
\newcommand{\op}{\mathtt{op}}
\newcommand\OExt[1]{{\mathcal{O}[#1]\mathrm{Ext}}}
\newcommand\CExt[1]{\mathrm{\mathcal{C}Ext}({#1})}
\newcommand{\id}{\mathrm{id}}
\newcommand\dual[1]{{#1}^{\vee}}
\newcommand{\pr}{{\mathtt{pr}}}
\newcommand{\ip}{{\langle\ ,\ \rangle}}
\newcommand{\Dirac}{{\mathcal{D}\!\!\!\!/}}
\newcommand{\ev}{\mathtt{ev}}
\newcommand{\ii}{{++}}
\newcommand\Mod[1]{{#1}\!-\!\mathtt{Mod}}
\newcommand\LA[1]{{#1}\!-\!\mathtt{LieAlgd}}
\newcommand*{\doublerightarrow}[2]{\mathrel{
  \settowidth{\@tempdima}{$\scriptstyle#1$}
  \settowidth{\@tempdimb}{$\scriptstyle#2$}
  \ifdim\@tempdimb>\@tempdima \@tempdima=\@tempdimb\fi
  \mathop{\vcenter{
    \offinterlineskip\ialign{\hbox to\dimexpr\@tempdima+1em{##}\cr
    \rightarrowfill\cr\noalign{\kern.5ex}
    \rightarrowfill\cr}}}\limits^{\!#1}_{\!#2}}}
\newcommand*{\triplerightarrow}[1]{\mathrel{
  \settowidth{\@tempdima}{$\scriptstyle#1$}
  \mathop{\vcenter{
    \offinterlineskip\ialign{\hbox to\dimexpr\@tempdima+1em{##}\cr
    \rightarrowfill\cr\noalign{\kern.5ex}
    \rightarrowfill\cr\noalign{\kern.5ex}
    \rightarrowfill\cr}}}\limits^{\!#1}}}
\begin{document}

\title{Odd transgression for Courant algebroids}

\author{Paul Bressler}
\address{Universidad de los Andes, Bogot\'a, Colombia}
\email{paul.bressler@gmail.com}

\author{Camilo Rengifo}
\address{Universidad de La Sabana, Ch\'ia, Colombia}
\email{camiloregu@unisabana.edu.co}

\maketitle

\begin{abstract}
The ``odd transgression" introduced in \citep{BR} is applied to construct and study the inverse image functor in the theory of Courant algebroids.
\end{abstract}

\section{Introduction}
The goal of this note is to demonstrate applications of ``odd transgression" introduced in \citep{BR} to the theory of Courant algebroids.

The ``odd transgression" functor associates to a Courant algebroid $\mathcal{Q}$ a differential-graded (DG) Lie algebroid $\tau\mathcal{Q}$ over the de Rham complex equipped with a central section of degree -2 which we refer to as a \emph{marking}. Conversely, a marked DG Lie algebroid $\mathcal{A}$ over the de Rham complex satisfying certain natural vanishing conditions gives rise, by way of taking its component in degree -1, to a Courant algebroid $\Cour(\mathcal{A})$. In particular, the Courant algebroid $\mathcal{Q}$ is recovered in this manner from its transgression $\tau\mathcal{Q}$. Using the functors $\tau$ and $\Cour$ one can project various standard constructions in Lie algebroids to the, perhaps less familiar, setting of Courant algebroids.

In this note we apply the above idea to the functor of inverse
image of a Lie (respectively, Courant) algebroid under a map
manifolds. The construction of inverse image for Courant
algebroids has appeared in the literature, at least in special
cases; see \citep{LM,SV,V}.  We study the
naturality properties of the inverse image functor as well as its
behavior with respect to some standard constructions. We formulate
and prove descent for Lie (respectively, Courant) algebroids along
a surjective submersion.

In addition, we take the opportunity to relate the inverse image for Courant algebroids to the notion of Dirac structure with support and of Courant morphisms of \citep{AX} (see also \citep{BIS}).

The paper is organized as follows. In Section \ref{section: DG manifolds} we briefly review the requisite notions from the theory of DG manifolds. In Section \ref{section: Transgression for Courant algebroids} we recall the ``odd transgression" for Courant algebroids of \citep{BR} and further develop its properties. In Section \ref{section: The inverse image functor} we construct the inverse image functor for $\mathcal{O}$-modules equipped with an ``anchor" map to the tangent bundle and study its behavior in compositions of maps. Section \ref{section: Localization and descent} is devoted to setting up the descent problem. In Section \ref{section: Inverse image for Lie algebroids} we apply previously obtained results to the setting of DG Lie algebroids. In Section \ref{section: Inverse image for Courant algebroids} we construct the inverse image functor for Courant algebroids and study its relationship to various notions of Courant algebroid theory.

\subsection{Notation}
In order to simplify notations in numerous signs we will write
``$a$" instead of ``$\deg(a)$" in expressions appearing in
exponents of $-1$. For example, $(-1)^{ab - 1}$ stands for
$(-1)^{\deg(a) \cdot \deg(b) - 1}$.

Throughout the paper ``manifold" means a $C^\infty$, real analytic or complex manifold.
For a manifold $X$ we denote by $\mathcal{O}_X$ the corresponding structure sheaf of \emph{real or complex valued} $C^\infty$, respectively analytic
or holomorphic functions. We denote by $\mathcal{T}_X$ (respectively, by $\Omega_X^k$) the sheaf of real or complex valued vector fields
(respectively, differential forms of degree $k$) on $X$.

For a sheaf of algebras $R$ on $X$, let $\Mod{R}$ denotes the (abelian) category of sheaves of $R$-modules on $X$.

For a map of manifolds $f \colon Y \to X$ and $\mathcal{E}\in\Mod{\mathcal{O}_X}$, we denote by
\[
f^*\mathcal{E} := \mathcal{O}_Y\otimes_{f^{-1}\mathcal{O}_X}f^{-1}\mathcal{E}
\]
the induced $\mathcal{O}_Y$-module, where $f^{-1}$ denotes the sheaf-theoretic inverse image. The assignment $\mathcal{E} \mapsto f^*\mathcal{E}$ extends to a functor
\[
f^* \colon \Mod{\mathcal{O}_X} \to \Mod{\mathcal{O}_Y}.
\]

For a sheaf $\mathcal{F}$ on $X$, by $a\in\mathcal{F}$ we mean that $a$ is a local section of $\mathcal{F}$, i.e. there is an open set $U$ of $X$ such that $a\in\Gamma(U;\mathcal{F})$.

For a $\mathbb{Z}$-graded object $A$ and $i\in\mathbb{Z}$ we denote by $A^i$ the graded component of $A$ of degree $i$.

\section{DG manifolds}\label{section: DG manifolds}
In what follows we use notation introduced in \citep{BR}.

\subsection{The category of DG manifolds}
For the purposes of the present note a \emph{differential-graded manifold (DG-manifold)} is a pair
$\mathfrak{X} := (X, \mathcal{O}_\mathfrak{X})$, where $X$ is a manifold and $\mathcal{O}_\mathfrak{X}$
is a sheaf of commutative differential-graded algebras (CDGA) on $X$ locally isomorphic to one of the form
$\mathcal{O}_X\otimes\Symm(E)$, where $\Symm(E)$ is the symmetric algebra of a finite-dimensional graded vector space $E$.

Let $\mathfrak{X} = (X, \mathcal{O}_\mathfrak{X})$ and $\mathfrak{Y} = (Y, \mathcal{O}_\mathfrak{Y})$ be DG-manifolds. A morphism $\phi\colon \mathfrak{X} \to \mathfrak{Y}$ is a morphism of ringed spaces, which is to say a map $\phi\colon X \to Y$ of manifolds together with a morphism of differential-graded algebras
$\phi^*\colon \phi^{-1}\mathcal{O}_\mathfrak{Y} \to \mathcal{O}_\mathfrak{X}$ compatible with the canoncical map
$\phi^{-1}\mathcal{O}_Y \to \mathcal{O}_X$.

We denote the category of DG-manifolds by $\DGMan$. Let $\DGMan^+$
denote the full subcategory of DG-manifolds $\mathfrak{X} = (X, \mathcal{O}_\mathfrak{X})$ such that $\mathcal{O}_\mathfrak{X}^i = 0$ if $i < N$ for some $N\in\mathbb{Z}$.

For $\mathfrak{X} \in \DGMan$ we denote by $\Mod{\mathcal{O}_\mathfrak{X}}$ the category of sheaves of differential-graded modules over the structure sheaf $\mathcal{O}_\mathfrak{X}$.

\begin{example}\label{example: shifted T}
An ordinary manifold is an example of a DG-manifold with the
structure sheaf concentrated in degree zero. Each ordinary
manifold $X$ determines a DG-manifold $X^\sharp \in \DGMan^+$
defined by $X^\sharp = (X,\Omega^\bullet_X, d)$ and frequently denoted by $T[1]X$ in the literature. There is a canonical morphism $X \to X^\sharp$ of DG-manifolds defined by the
canonical map $\Omega^\bullet_X \to \mathcal{O}_X$.
\end{example}
\begin{example}\label{example: odd line}
Let $\vec{\mathfrak{t}}$ denote the DG-manifold with the
underlying space  consisting of one point and the DG-algebra of
functions $\mathcal{O}_{\vec{\mathfrak{t}}} =
\mathbb{C}[\epsilon]$, the free graded commutative algebra with one
generator $\epsilon$ of degree $-1$ and the differential
$\partial_\epsilon \colon \epsilon \mapsto 1$. Note that
$\vec{\mathfrak{t}} \in \DGMan^+$.
\end{example}

The category $\DGMan^+$ has finite products (\citep{BR}, Lemma 2.1). For
$\mathfrak{X} = (X, \mathcal{O}_\mathfrak{X}), \mathfrak{Y} = (Y,
\mathcal{O}_\mathfrak{Y}) \in \DGMan^+$ the product is given by $(X\times Y,
\mathcal{O}_{\mathfrak{X}\times\mathfrak{Y}})$ with
\[
\mathcal{O}_{\mathfrak{X}\times\mathfrak{Y}} := \mathcal{O}_{X\times Y}\otimes_{\pr_X^{-1}\mathcal{O}_X\otimes\pr_Y^{-1}\mathcal{O}_Y} \pr_X^{-1}\mathcal{O}_\mathfrak{X}\otimes\pr_Y^{-1}\mathcal{O}_\mathfrak{Y} .
\]


\subsection{The odd path space}
For a manifold $X$ the mapping space $X^{\vec{\mathfrak{t}}}$ is represented by the DG manifold $X^\sharp$ of Example \ref{example: shifted T} (\citep{BR}, Theorem 2.1). The evaluation map
\[
\ev \colon X^\sharp \times \vec{\mathfrak{t}} \to X
\]
corresponds to the morphism of ``pull-back of functions''
\[
\ev^* \colon \mathcal{O}_X \to \mathcal{O}_{X^\sharp}[\epsilon] := \mathcal{O}_{X^\sharp\times \vec{\mathfrak{t}}}
\]
given by $f \mapsto f + df\cdot\epsilon$.

There is a short exact sequence of \emph{graded} $\mathcal{O}_{X^\sharp}$-modules
\[
0 \to \mathcal{O}_{X^\sharp}\otimes_{\mathcal{O}_X} \mathcal{T}_X[1] \to \mathcal{T}_{X^\sharp} \to \mathcal{O}_{X^\sharp}\otimes_{\mathcal{O}_X} \mathcal{T}_X \to 0 ,
\]
where $\mathcal{O}_{X^\sharp}\otimes_{\mathcal{O}_X} \mathcal{T}_X[1] \cong \mathcal{T}_{X^\sharp/X}$.

\subsection{Immersions and submersions}
\begin{proposition}
If a map of manifolds $f\colon Y \to X$ is an immersion (respectively, submersion), then the induced map $f^\sharp\colon Y^\sharp \to X^\sharp$ is an immersion (respectively, submersion).
\end{proposition}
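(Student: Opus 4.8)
The plan is to deduce the statement from the behaviour of the tangent sheaves recorded in the short exact sequence above, together with the fact that both conditions are local on $Y$ and that $(-)^\sharp$ carries products to products. I would work with the characterization that a morphism of DG-manifolds is an immersion precisely when the induced tangent map is fibrewise injective with locally free cokernel, and a submersion precisely when it is surjective with locally free kernel. Applying this to $f^\sharp\colon Y^\sharp\to X^\sharp$, the first task is to identify the tangent map $df^\sharp\colon\mathcal{T}_{Y^\sharp}\to(f^\sharp)^*\mathcal{T}_{X^\sharp}$. Since $f^\sharp$ covers $f\colon Y\to X$ and both $Y^\sharp\to Y$ and $X^\sharp\to X$ are the canonical projections, $df^\sharp$ is a morphism of the two tangent sequences of the excerpt (the one for $Y^\sharp$ and the $(f^\sharp)^*$ of the one for $X^\sharp$), whose induced map on the quotients $\mathcal{O}_{Y^\sharp}\otimes_{\mathcal{O}_Y}\mathcal{T}_Y\to\mathcal{O}_{Y^\sharp}\otimes_{\mathcal{O}_Y}f^*\mathcal{T}_X$ is $\id\otimes df$, and on the sub-objects $\mathcal{T}_{Y^\sharp/Y}\cong\mathcal{O}_{Y^\sharp}\otimes_{\mathcal{O}_Y}\mathcal{T}_Y[1]$ is the shift $\id\otimes df[1]$.

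Granting this, the proof is a diagram chase. If $f$ is an immersion then $df\colon\mathcal{T}_Y\to f^*\mathcal{T}_X$ is a split injection of vector bundles, so both $\id\otimes df$ and $\id\otimes df[1]$ are degreewise split injections of graded locally free modules; the snake lemma applied to the morphism of short exact sequences then forces $df^\sharp$ to be injective with locally free cokernel, whence $f^\sharp$ is an immersion. Dually, if $f$ is a submersion then $df$ is a split surjection, both outer vertical arrows are surjective, and $df^\sharp$ is surjective with locally free kernel, so $f^\sharp$ is a submersion. Alternatively, and perhaps more transparently, I would reduce to local normal form: by the constant rank theorem (valid in the $C^\infty$, real analytic and holomorphic settings) $f$ is, in suitable charts, the projection $U\times V\to U$ in the submersion case and the inclusion $U\hookrightarrow U\times W$, $y\mapsto(y,0)$, in the immersion case. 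Because $\Omega^\bullet_{U\times V}\cong\mathcal{O}_{U\times V}\otimes(\pr_U^{-1}\Omega^\bullet_U\otimes\pr_V^{-1}\Omega^\bullet_V)$, the functor $(-)^\sharp$ satisfies $(U\times V)^\sharp\cong U^\sharp\times V^\sharp$, which is exactly the product formula for $\DGMan^+$ recalled above; under this identification $f^\sharp$ becomes either the product projection $U^\sharp\times V^\sharp\to U^\sharp$ or $\id_{U^\sharp}\times\iota^\sharp$ with $\iota\colon\{0\}\hookrightarrow W$, each manifestly a submersion or an immersion.

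The main obstacle is the bookkeeping required to justify that $df^\sharp$ really is the morphism of tangent sequences built from the two indicated shifted copies of $df$; that is, that $f^\sharp$ acts on the odd (relative) directions $\mathcal{T}_{Y^\sharp/Y}$ by the shift of $df$ and compatibly with the internal differential. This is where the signs and the CDGA structure of $\mathcal{O}_{X^\sharp}=\Omega^\bullet_X$ enter, and it is cleanest to verify it in a chart on $X$, where the formula $\ev^*(x_i)=x_i+dx_i\cdot\epsilon$ makes the action of $f^\sharp$ on the generators explicit. A secondary point, needed for the immersion case in particular, is to confirm that the ambient notions of immersion and submersion are local on the source and stable under taking the product with an identity morphism, so that the normal-form reduction is legitimate; granting the tangent-map characterization above, both stability properties are immediate.
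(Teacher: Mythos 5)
Your primary argument is essentially the paper's own proof: the paper likewise views $df^\sharp$ as a morphism from the tangent short exact sequence of $Y^\sharp$ to the $f^\sharp$-pullback of that of $X^\sharp$, with outer vertical arrows $\id\otimes df$ and its shift, and concludes by a diagram chase (the paper invokes the five-lemma where you use the snake lemma --- the same chase in this situation). Your chart verification that $df^\sharp$ really restricts to $\id\otimes df[1]$ on the relative directions, and your alternative reduction to constant-rank normal form via $(U\times V)^\sharp\cong U^\sharp\times V^\sharp$, are correct added detail beyond what the paper records, but the route is the same.
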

\begin{proof}
The derivative $df^\sharp\colon \mathcal{T}_{Y^\sharp} \to f^{\sharp *}\mathcal{T}_{X^\sharp}$ gives rise to the map of short exact sequences
\[
\begin{CD}
0 @>>> \mathcal{O}_{Y^\sharp}\otimes_{\mathcal{O}_Y} \mathcal{T}_Y[1] @>>> \mathcal{T}_{Y^\sharp} @>>> \mathcal{O}_{Y^\sharp}\otimes_{\mathcal{O}_Y} \mathcal{T}_Y @>>> 0 \\
& & @V{\id\otimes df}VV @V{df^\sharp}VV @VV{\id\otimes df}V \\
0 @>>> \mathcal{O}_{Y^\sharp}\otimes_{f^{-1}\mathcal{O}_X} \mathcal{T}_X[1] @>>> \mathcal{T}_{X^\sharp} @>>> \mathcal{O}_{Y^\sharp}\otimes_{f^{-1}\mathcal{O}_X} \mathcal{T}_X @>>> 0
\end{CD}
\]
and the claim follows from the five-lemma.
\end{proof}

\subsection{Transgression for $\mathcal{O}$-modules}
We denote by $\pr \colon X^\sharp \times \vec{\mathfrak{t}} \to X^\sharp$ the canonical projection. The diagram
\[
\begin{CD}
X^\sharp\times\vec{\mathfrak{t}} @>{\ev}>> X \\
@V{\pr}VV \\
X^\sharp
\end{CD}
\]
gives rise to the functor
\begin{equation}\label{pr-ev for O-mod}
\pr_*\ev^* \colon \Mod{\mathcal{O}_X} \to \Mod{\mathcal{O}_{X^\sharp}}.
\end{equation}
Since the underlying space of both $X^\sharp$ and $X^\sharp\times\vec{\mathfrak{t}}$ is equal to $X$, the functor $\ev^* \colon \Mod{\mathcal{O}_X} \to \Mod{\mathcal{O}_{X^\sharp\times\vec{\mathfrak{t}}}}$ is given by $\ev^*\mathcal{E} = \mathcal{O}_{X^\sharp}[\epsilon]\otimes_{\mathcal{O}_X}\mathcal{E}$ and the effect of the functor $\pr_*$ amounts to restriction of scalars along the unit map $\mathcal{O}_{X^\sharp} \to \mathcal{O}_{X^\sharp}[\epsilon]$.

\subsection{Lie algebroids}\label{subsection: Lie algebroids}
An $\mathcal{O}_X$-Lie algebroid structure on an $\mathcal{O}_X$-module $\mathcal{A}$ consists of
\begin{enumerate}
\item a structure of a sheaf of $\mathbb{C}$-Lie algebras $[\ ,\ ] \colon \mathcal{A}\otimes_\mathbb{C}\mathcal{A} \to \mathcal{A}$;

\item an $\mathcal{O}_X$-linear map $\sigma \colon \mathcal{A} \to
\mathcal{T}_X$ of Lie algebras called the \emph{anchor map}.
\end{enumerate}
These data are required to satisfy the compatibility condition (Leibniz rule)
\[
[a,f\cdot b] = \sigma(a)(f)\cdot b + f\cdot [a,b]
\]
for $a,b \in \mathcal{A}$ and $f \in \mathcal{O}_X$.

A morphism of $\mathcal{O}_X$-Lie algebroids $\phi \colon
\mathcal{A}_1 \to \mathcal{A}_2$ is an $\mathcal{O}_X$-linear map
of Lie algebras which commutes with respective anchor maps.

With the above definition of morphisms $\mathcal{O}_X$-Lie algebroids form a category denoted $\LA{\mathcal{O}_X}$.

The notion of Lie algebroid generalizes readily to the DG context.

\subsection{Transgression for Lie algebroids}\label{subsection: Transgression for Lie algebroids}
Suppose that $\mathcal{A}$ is an $\mathcal{O}_X$-Lie algebroid as in \ref{subsection: Lie algebroids}. It is shown in \citep{BR}, 3.5, that the $\mathcal{O}_{X^\sharp}$-module $\pr_*\ev^*\mathcal{A}$ admits a canonical structure of a $\mathcal{O}_{X^\sharp}$-Lie algebroid, denoted henceforth by $\mathcal{A}^\sharp$. Moreover, the assignment $\mathcal{A} \mapsto \mathcal{A}^\sharp$ extends to a functor
\[
\left(\ \right)^\sharp \colon \LA{\mathcal{O}_X} \to \LA{\mathcal{O}_{X^\sharp}}
\]
which preserves terminal objects, i.e. the canonical map $\mathcal{T}_X^\sharp \to \mathcal{T}_{X^\sharp}$ is an isomorphism, and products.

\section{Transgression for Courant algebroids}\label{section: Transgression for Courant algebroids}
\subsection{Marked Lie algebroids}
Suppose that $\mathfrak{X} = (X, \mathcal{O}_\mathfrak{X})$ is a DG-manifold.

A \emph{marked $\mathcal{O}_\mathfrak{X}$-Lie algebroid} is a pair
$(\mathcal{A}, \mathfrak{c})$, where $\mathcal{A}$ is a
$\mathcal{O}_\mathfrak{X}$-Lie algebroid and
$\mathfrak{c}\in\Gamma(X;\mathcal{A})$ is a homogeneous central
section (i.e. $[\mathfrak{c},\mathcal{A}] = 0$). The section $\mathfrak{c}$ is called a marking.

It is easy to see (\citep{BR}, Lemma 3.6) that any (homogeneous) central section belongs to the kernel of the anchor map.

A morphism $\phi\colon (\mathcal{A}_1, \mathfrak{c}_1) \to
(\mathcal{A}_2, \mathfrak{c}_2)$ is a morphism of Lie algebroids
$\phi\colon \mathcal{A}_1 \to \mathcal{A}_2$ such that
$\phi(\mathfrak{c}_1) = \mathfrak{c}_2$. In particular,
$\mathfrak{c}_1$ and $\mathfrak{c}_2$ have the same degree.

With the above definitions marked $\mathcal{O}_\mathfrak{X}$-Lie
algebroids and morphisms thereof form a category denoted
$\LA{\mathcal{O}_\mathfrak{X}}^\star$. The full subcategory of
marked $\mathcal{O}_\mathfrak{X}$-Lie algebroids $(\mathcal{A},
\mathfrak{c})$ with $\deg\mathfrak{c} = n$ is denoted
$\LA{\mathcal{O}_\mathfrak{X}}^\star_n$.

For a marked Lie algebroid $(\mathcal{A}, \mathfrak{c})$ with $\deg\mathfrak{c} = n$ the Lie algebroid structure on $\mathcal{A}$ descends to
\[
\overline{\mathcal{A}} := \coker(\mathcal{O}_\mathfrak{X}[n] \xrightarrow{\cdot\mathfrak{c}} \mathcal{A})
\]
(\citep{BR}, Lemma 3.7). The assignment $(\mathcal{A}, \mathfrak{c}) \mapsto \overline{\mathcal{A}}$ extends to a functor
\[
\overline{(\ )} \colon \LA{\mathcal{O}_\mathfrak{X}}^\star \to \LA{\mathcal{O}_\mathfrak{X}}.
\]

\begin{example}\label{trivial-marked-Lie-alg}
The structure sheaf $\mathcal{O}_\mathfrak{X}[n]$ has a canonical
structure of a marked $\mathcal{O}_\mathfrak{X}$-Lie algebroid,
whose non trivial part is the marking $id \colon
\mathcal{O}_\mathfrak{X}[n] \to \mathcal{O}_\mathfrak{X}[n]$. In
particular, the marking is a morphism of marked Lie algebroids and the $\mathcal{O}_\mathfrak{X}$-Lie algebroid
$\overline{\mathcal{O}_\mathfrak{X}[n]}$ is zero.
\end{example}

\subsection{$\mathcal{O}_\mathfrak{X}[n]$-extensions}
Suppose that $\mathcal{B}$ is a $\mathcal{O}_\mathfrak{X}$-Lie algebroid.

An $\mathcal{O}_\mathfrak{X}[n]$-extension of $\mathcal{B}$ is a
marked $\mathcal{O}_\mathfrak{X}$-Lie algebroid $(\mathcal{A},
\mathfrak{c})$ with $\deg\mathfrak{c} = n$ \emph{together with} a morphism $\mathcal{A} \to \mathcal{B}$ such that the sequence
\[
0 \to \mathcal{O}_\mathfrak{X}[n] \xrightarrow{\cdot\mathfrak{c}} \mathcal{A} \to \mathcal{B} \to 0
\]
is exact. A morphism of $\mathcal{O}_\mathfrak{X}[n]$-extensions of $\mathcal{B}$ is a morphism of marked Lie algebroids which induces the identity map on $\mathcal{B}$. Such a map is necessarily an isomorphism. We denote the category (groupoid) of $\mathcal{O}_\mathfrak{X}[n]$-extensions of $\mathcal{B}$ by $\OExt{n}(\mathcal{B})$.

The category $\OExt{n}(\mathcal{B})$ has a canonical structure of a `$\mathbb{C}$-vector space in categories' (hence, in particular, that of a Picard groupoid). Namely, given extensions $\mathcal{A}_1,\ldots,\mathcal{A}_m$ and complex numbers $\lambda_1,\ldots,\lambda_m$ the `linear combination' $\lambda_1\mathcal{A}\dotplus\cdots\dotplus\lambda_m\mathcal{A}_m$ is defined by the push-out diagram
\[
\begin{CD}
\mathcal{O}_\mathfrak{X}[n]\times\cdots\times\mathcal{O}_\mathfrak{X}[n] @>>> \mathcal{A}_1\times_\mathcal{B}\cdots\times_\mathcal{B}\mathcal{A}_m \\
@VVV @VVV \\
\mathcal{O}_\mathfrak{X}[n] @>>> \lambda_1\mathcal{A}_1\dotplus\cdots\dotplus\lambda_m\mathcal{A}_m
\end{CD}
\]
where the left vertical arrow is given by $(\alpha_1,\ldots,\alpha_m) \mapsto \sum\limits_i\lambda_i\alpha_i$. The bracket on $\lambda_1\mathcal{A}\dotplus\cdots\dotplus\lambda_m\mathcal{A}_m$ is characterized by the fact that the right vertical map is a morphism of Lie algebras.


\subsection{Courant algebroids}
Courant algebroids were introduced in \citep{LWX}, \citep{R} and \citep{BCh}. For comparison of the following definition with the one encountered in the literature see Remark \ref{remark: comparison of def}.

A Courant algebroid is an $\mathcal{O}_X$-module $\mathcal{Q}$ equipped with
\begin{enumerate}
\item a structure of a Leibniz $\mathbb{C}$-algebra
\[
\{\ ,\ \} \colon \mathcal{Q}\otimes_\mathbb{C}\mathcal{Q} \to \mathcal{Q} ;
\]

\item an $\mathcal{O}_X$-linear map of Leibniz algebras (the anchor map)
\[
\pi \colon \mathcal{Q} \to \mathcal{T}_X ;
\]

\item a symmetric $\mathcal{O}_X$-bilinear pairing\footnote{not assumed to be non-degenerate}
\[
\ip\colon \mathcal{Q}\otimes_{\mathcal{O}_X}\mathcal{Q} \to \mathcal{O}_X;
\]

\item an $\mathcal{O}_X$-linear map (the co-anchor map)
\[
\pi^\dagger \colon \Omega_X^1 \to \mathcal{Q} .
\]
\end{enumerate}
These data are required to satisfy
\begin{eqnarray}
\pi\circ\pi^\dagger & = & 0 \label{complex}\\
\{q_1, fq_2\} & = & f\{q_1,q_2\} + \pi(q_1)(f)q_2 \label{leibniz}\\
\langle \{q,q_1\}, q_2 \rangle + \langle q_1, \{q, q_2\}\rangle & = & L_{\pi(q)}\langle q_1,q_2 \rangle \label{ip invariance}\\
\{q, \pi^\dagger(\alpha)\} & = & \pi^\dagger(L_{\pi(q)}(\alpha)) \label{forms left ideal}\\
\langle q, \pi^\dagger(\alpha) \rangle & = & \iota_{\pi(q)}\alpha \label{adjunction}\\
\{q_1, q_2\} + \{q_2, q_1\} & = & \pi^\dagger(d\langle q_1, q_2\rangle) \label{symmetrizer}
\end{eqnarray}
for $f\in \mathcal{O}_X$ and $q, q_1, q_2 \in \mathcal{Q}$.

A morphism $\phi \colon \mathcal{Q}_1 \to \mathcal{Q}_2$ of Courant algebroids on $X$ is an
$\mathcal{O}_X$-linear map of Leibniz $\mathbb{C}$-algebras such
that the diagram
\[
\begin{CD}
\Omega^1_X @>{\pi^\dagger_1}>> \mathcal{Q}_1 @>{\pi_1}>> \mathcal{T}_X \\
@| @V{\phi}VV @| \\
\Omega^1_X @>{\pi^\dagger_2}>> \mathcal{Q}_2 @>{\pi_2}>> \mathcal{T}_X
\end{CD}
\]
is commutative.

With the above definitions Courant algebroids on $X$ and morphisms thereof form a category henceforth denoted $\CA(X)$.

\begin{remark}\label{remark: comparison of def}
The orginal definition of Courant algebroid is due to \citep{LWX}, based on \citep{C} and \citep{D}, where the additional assumption of non-degeneracy of the pairing is made. Under the latter assumption the co-anchor map is uniquely determined by the anchor map and, hence, does not appear as a separate item. The broader concept of Courant algebroid as described by the definition was advanced in \citep{B} and permits, for example, regarding the kernel of the anchor map as a Courant algebroid with trivial anchor.
\end{remark}

Suppose that $\mathcal{Q}$ is a Courant algebroid on $X$. Let
\[
\overline{\mathcal{Q}} = \coker(\pi^\dagger) .
\]
The Courant algebroid structure on $\mathcal{Q}$ descends to a structure on a Lie algebroid on $\overline{\mathcal{Q}}$. In what follows we refer to the Lie algebroid $\overline{\mathcal{Q}}$ as \emph{the Lie algebroid associated to the Courant algebroid $\mathcal{Q}$}.

The assignment $\mathcal{Q} \mapsto \overline{\mathcal{Q}}$ extends to a functor
\[
\overline{(\ )} \colon \CA(X) \longrightarrow \LA{\mathcal{O}_X} .
\]

For a Courant algebroid $\mathcal{Q}$ the \emph{opposite} Courant algebroid, denoted $\mathcal{Q}^\op$, has the same underlying $\mathcal{O}_X$-module as $\mathcal{Q}$, the same Leibniz bracket (i.e. $\{\ ,\ \}^\op = \{\ ,\ \})$, same anchor map $\pi^\op = \pi$, the symmetric pairing $\ip^\op = - \ip$ and the co-anchor map $\pi^{\op\dagger} = - \pi^\dagger$.

Suppose that $\mathcal{Q}_1$ and $\mathcal{Q}_2$ are Courant algebroids. The Courant algebroid $\mathcal{Q}_1\dotplus\mathcal{Q}_2$ is defined by the push-out square
\[
\begin{CD}
\Omega^1_X\times\Omega^1_X @>>> \mathcal{Q}_1\times_{\mathcal{T}_X}\mathcal{Q}_2 \\
@V{+}VV @VVV \\
\Omega^1_X @>>> \mathcal{Q}_1\dotplus\mathcal{Q}_2
\end{CD}
\]
and equipped with the component-wise operations.

\subsection{Courant extensions}
Suppose that $\mathcal{A}$ is a $\mathcal{O}_X$-Lie algebroid. A Courant extension of $\mathcal{A}$ is a Courant algebroid $\mathcal{Q}$ together with the identification $\overline{\mathcal{Q}} \cong \mathcal{A}$ such that the sequence
\begin{equation}\label{CExt transitive exact sequence}
0 \to \Omega^1_X \xrightarrow{\pi^\dagger} \mathcal{Q} \to \mathcal{A} \to 0
\end{equation}
is exact.

A morphism $\phi\colon \mathcal{Q}_1 \to \mathcal{Q}_2$ of Courant extensions of $\mathcal{A}$ is a morphism of Courant algebroids which is compatible with the identifications $\overline{\mathcal{Q}_i} \cong \mathcal{A}$. We denote the category of Courant extension of $\mathcal{A}$ by $\CExt{\mathcal{A}}$. A morphism in $\CExt{\mathcal{A}}$ induces a morphism of
associated short exact sequences \eqref{CExt transitive exact
sequence}, hence is an isomorphism of  underlying
$\mathcal{O}_X$-modules. It is easy to see that the inverse map
is, in fact, a morphism of Courant algebroids. Consequently,
$\CExt{\mathcal{A}}$ is a groupoid.

\subsection{Linear algebra}
The category $\CExt{\mathcal{A}}$ has a canonical structure of a `$\mathbb{C}$-vector space in categories' (hence, in particular, that of a Picard groupoid). Namely, given extensions $\mathcal{Q}_1,\ldots,\mathcal{Q}_n$ and complex numbers $\lambda_1,\ldots,\lambda_n$ the `linear combination' $\lambda_1\mathcal{Q}\dotplus\cdots\dotplus\lambda_n\mathcal{Q}_n$ is defined by the push-out diagram
\[
\begin{CD}
\Omega^1_X\times\cdots\times\Omega^1_X @>>> \mathcal{Q}_1\times_\mathcal{A}\cdots\times_\mathcal{A}\mathcal{Q}_n \\
@VVV @VVV \\
\Omega^1_X @>>> \lambda_1\mathcal{Q}\dotplus\cdots\dotplus\lambda_n\mathcal{Q}_n
\end{CD}
\]
where the left vertical arrow is given by $(\alpha_1,\ldots,\alpha_n) \mapsto \sum\limits_i\lambda_i\alpha_i$. The Leibniz bracket on $\lambda_1\mathcal{Q}\dotplus\cdots\dotplus\lambda_n\mathcal{Q}_n$ is characterized by the fact that the right vertical map is a morphism of Leibniz algebras.

\subsection{Exact Courant algebroids}
A Courant algebroid $\mathcal{Q}$ is called \emph{exact} if the map $\overline{\mathcal{Q}} \to \mathcal{T}_X$ is an isomorphism. Thus, an exact Courant algebroid is a Courant extension of $\mathcal{T}_X$. We denote the category of exact Courant algebroids by $\ECA(X) := \CExt{\mathcal{T}_X}$.

\subsection{Transitive Courant algebroids}
A Courant algebroid is called \emph{transitive} if the associated Lie algebroid is, which is to say, the anchor map is an epimorphism. If $\mathcal{Q}$ is a transitive Courant algebroid, the sequence
\[
0 \to \Omega^1_X \to \mathcal{Q} \to \overline{\mathcal{Q}} \to 0
\]
is exact.

\subsection{Transgression for Courant algebroids}\label{subsection: Transgression for Courant algebroids}
We denote by
\begin{equation}\label{integration map}
{\textstyle\int}\colon \pr_*\ev^*\Omega_X^1 = \mathcal{O}_{X^\sharp}[\epsilon]\otimes_{\mathcal{O}_X}\Omega_X^1 \to \mathcal{O}_{X^\sharp}[2] .
\end{equation}
the map of $\mathcal{O}_{X^\sharp}$-modules whose component of degree $-1$ is the identity map.

For a Courant algebroid $\mathcal{Q}$ the marked Lie algebroid $(\tau\mathcal{Q},\mathfrak{c}) \in \LA{\mathcal{O}_\mathfrak{X}}^\star_2$ is given by the $\mathcal{O}_{X^\sharp}$-module
\[
\tau\mathcal{Q} := \coker(\pr_*\ev^*\Omega_X^1
\xrightarrow{(\int,-\pr_*\ev^*(\pi^\dagger))}
\mathcal{O}_{X^\sharp}[2]\oplus \pr_*\ev^*\mathcal{Q}) \ ,
\]
where $\int$ is the map \eqref{integration map}. In other words, the square
\begin{equation}\label{defn tauQ coCart square}
\begin{CD}
\pr_*\ev^*\Omega_X^1 @>{\pr_*\ev^*(\pi^\dagger)}>> \pr_*\ev^*\mathcal{Q} \\
@V{\int}VV @VVV \\
\mathcal{O}_{X^\sharp}[2] @>>> \tau\mathcal{Q}
\end{CD}
\end{equation}
is cocartesian.

The anchor map is induced by $\pr_*\ev^*(\pi)$ and the marking $\mathfrak{c}\in\Gamma(X;\tau\mathcal{Q}^{-2})$ is the image of $1\in\Gamma(X;(\mathcal{O}_{X^\sharp}[2])^{-2})$ under the bottom horizontal map in \eqref{defn tauQ coCart square}.

The bracket is the extension by Leibniz rule of
\begin{enumerate}
\item $[\mathfrak{c},\tau\mathcal{Q}] = 0$

\item $[q\cdot\epsilon, \beta] = -(-1)^{|\beta|}[\beta,q\cdot\epsilon] = \iota_{\pi(q)}\beta$

\item $[q,\beta] = -[\beta, q] = L_{\pi(q)}\beta$

\item
$[q_1\cdot\epsilon,q_2\cdot\epsilon]^{-1,-1} = \langle
q_1,q_2\rangle \in (\mathcal{O}_{X^\sharp}[2])^{-2}=\mathcal{O}_X$

\item
$[q_1,q_2\cdot\epsilon]^{0,-1} = \{q_1,q_2\}\cdot\epsilon$

\item $[q_1\cdot\epsilon, q_2]^{-1,0} = -d\langle
q_1,q_2\rangle + \{q_1,q_2\}\cdot\epsilon$
\end{enumerate}

The marked $\mathcal{O}_{X^\sharp}$-Lie algebroid $\tau\mathcal{Q}$ enjoys the following properties:
\begin{enumerate}
\item The natural map $\overline{\mathcal{Q}}^\sharp \to \overline{\tau\mathcal{Q}}$ is an isomorphism.

\item The canonical map $\mathcal{Q} \to \tau\mathcal{Q}^{-1}$ is an isomorphism.

\item $\mathcal{Q}$ is a Courant extension of $\mathcal{A}$ if and only if $\tau\mathcal{Q}$ is a $\mathcal{O}_{X^\sharp}[2]$-extension of $\mathcal{A}^\sharp$.
\end{enumerate}

The assignment $\mathcal{Q} \mapsto \tau\mathcal{Q}$ extends to a functor
\[
\tau \colon \CA(X) \longrightarrow \LA{\mathcal{O}_{X^\sharp}}^\star_2 .
\]

Suppose that $(\mathcal{B},\mathfrak{c}) \in \LA{\mathcal{O}_{X^\sharp}}^\star_2$ satisfies $\overline{\mathcal{B}}^i = 0$ for $i \leqslant -2$. Then, the derived bracket and the $(-1,-1)$-component of the Lie bracket together with the components of degree -1 of the anchor map and of $\mathcal{O}_{X^\sharp}[2] \xrightarrow{\cdot\mathfrak{c}} \mathcal{B}$ endow $\mathcal{B}^{-1}$ with a structure of a Courant algebroid (see \citep{BR}, Lemma 6.1). We denote this Courant algebroid structure on $\mathcal{B}^{-1}$  by $\Cour(\mathcal{B},\mathfrak{c})$.

For any Courant algebroid $\mathcal{Q}$ the marked Lie algebroid $\tau\mathcal{Q}$ satisfies the above requirements. Moreover, $\Cour(\tau\mathcal{Q}) = \mathcal{Q}$.

\begin{proposition}\label{lemma: OExt and CExt}
Suppose that $\mathcal{A}$ is a $\mathcal{O}_X$-Lie algebroid $\mathcal{A}$. The functor $\tau$ restricts to an equivalence of categories
\[
\tau \colon \CExt{\mathcal{A}}  \longrightarrow \OExt{2}(\mathcal{\mathcal{A}^\sharp})
\]
whose quasi-inverse is given by the functor
\[
\Cour \colon \OExt{2}(\mathcal{A}^\sharp) \to \CExt{\mathcal{A}} .
\]
\end{proposition}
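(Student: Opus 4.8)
The plan is to establish the equivalence by verifying that the two functors $\tau$ and $\Cour$, already constructed in the excerpt, restrict correctly to the indicated subcategories and are mutually quasi-inverse. The essential input is property (3) in the discussion of transgression: $\mathcal{Q}$ is a Courant extension of $\mathcal{A}$ if and only if $\tau\mathcal{Q}$ is an $\mathcal{O}_{X^\sharp}[2]$-extension of $\mathcal{A}^\sharp$. This already shows that $\tau$ carries objects of $\CExt{\mathcal{A}}$ into $\OExt{2}(\mathcal{A}^\sharp)$, so the first step is simply to record that $\tau$ is well-defined on these subcategories, observing that a morphism of Courant extensions inducing the identity on $\mathcal{A}$ transgresses to a morphism of marked Lie algebroids inducing the identity on $\mathcal{A}^\sharp$ (using the functoriality of $(\ )^\sharp$ and the fact that $\overline{\tau\mathcal{Q}} \cong \overline{\mathcal{Q}}^\sharp$).

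Next I would check that $\Cour$ lands in $\CExt{\mathcal{A}}$ when restricted to $\OExt{2}(\mathcal{A}^\sharp)$. Given $(\mathcal{B}, \mathfrak{c}) \in \OExt{2}(\mathcal{A}^\sharp)$, the defining exact sequence $0 \to \mathcal{O}_{X^\sharp}[2] \to \mathcal{B} \to \mathcal{A}^\sharp \to 0$ forces $\overline{\mathcal{B}} \cong \mathcal{A}^\sharp$, and since $\mathcal{A}^\sharp$ is concentrated in degrees $\geqslant -1$ (as $\mathcal{A}$ is an ordinary Lie algebroid and $(\ )^\sharp = \pr_*\ev^*$ produces a two-step filtration), the vanishing hypothesis $\overline{\mathcal{B}}^i = 0$ for $i \leqslant -2$ is met, so $\Cour(\mathcal{B},\mathfrak{c})$ is defined. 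I would then extract the exact sequence in degree $-1$: taking the degree $-1$ component of the extension sequence, together with the identification of $(\mathcal{A}^\sharp)^{-1}$ and of the image of $\mathfrak{c}$, yields $0 \to \Omega^1_X \xrightarrow{\pi^\dagger} \mathcal{B}^{-1} \to \mathcal{A} \to 0$, exhibiting $\Cour(\mathcal{B},\mathfrak{c})$ as a Courant extension of $\mathcal{A}$. Here one uses that $(\mathcal{O}_{X^\sharp}[2])^{-1} = \Omega^1_X$ and $(\mathcal{A}^\sharp)^{-1} = \mathcal{A}$.

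It then remains to produce natural isomorphisms $\Cour \circ \tau \cong \id$ and $\tau \circ \Cour \cong \id$. The first is essentially given for free: the excerpt already asserts $\Cour(\tau\mathcal{Q}) = \mathcal{Q}$, and I would promote this identity to a natural isomorphism of functors on $\CExt{\mathcal{A}}$, checking compatibility with the identifications $\overline{\mathcal{Q}} \cong \mathcal{A}$. For the other direction I would construct, for each $(\mathcal{B},\mathfrak{c}) \in \OExt{2}(\mathcal{A}^\sharp)$, a canonical comparison morphism $\tau(\Cour(\mathcal{B},\mathfrak{c})) \to \mathcal{B}$ of marked Lie algebroids; since both sides are $\mathcal{O}_{X^\sharp}[2]$-extensions of $\mathcal{A}^\sharp$ and the map induces the identity on $\mathcal{A}^\sharp$ and on the central $\mathcal{O}_{X^\sharp}[2]$, it is automatically an isomorphism by the five-lemma applied to the two extension sequences.

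The main obstacle I expect is the construction of this last comparison map $\tau(\Cour(\mathcal{B},\mathfrak{c})) \to \mathcal{B}$, and in particular verifying that it is a morphism of Lie algebroids rather than merely of $\mathcal{O}_{X^\sharp}$-modules. The subtlety is that $\tau\mathcal{Q}$ is built by reconstituting a full marked Lie algebroid out of the degree $-1$ Courant data, whereas $\mathcal{B}$ carries its bracket in all degrees; one must show that the brackets on $\mathcal{B}$ in degrees other than $(-1,-1)$ and $(0,-1)$ are \emph{determined} by the Courant structure $\Cour(\mathcal{B},\mathfrak{c})$ together with the Leibniz rule and the vanishing condition $\overline{\mathcal{B}}^i = 0$ for $i \leqslant -2$. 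Concretely this amounts to matching, term by term, the six bracket formulas (1)--(6) defining $\tau\mathcal{Q}$ against the derived-bracket expressions recovering the Courant operations in Lemma 6.1 of \citep{BR}; once these agree, the universal (cocartesian) property of the square \eqref{defn tauQ coCart square} furnishes the map and the five-lemma finishes the argument.
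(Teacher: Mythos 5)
Your proposal is correct and follows the same route the paper (implicitly) takes: the paper states this proposition without a written proof, treating it as a consequence of the three listed properties of $\tau\mathcal{Q}$ (in particular property (3) and $\Cour(\tau\mathcal{Q})=\mathcal{Q}$) with the substance --- the construction of $\Cour$ and the comparison $\tau(\Cour(\mathcal{B},\mathfrak{c}))\cong\mathcal{B}$ --- deferred to Lemma 6.1 of \citep{BR}. Your sketch correctly identifies the only nontrivial direction (the comparison map $\tau(\Cour(\mathcal{B},\mathfrak{c}))\to\mathcal{B}$, forced by the cocartesian square and the DG-module structure, then automatically an isomorphism since morphisms of $\mathcal{O}_{\mathfrak{X}}[n]$-extensions are isomorphisms) and thus supplies exactly the details the paper omits.
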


\subsection{Transgression and linear algebra}
Suppose that $\mathcal{A}$ is a $\mathcal{O}_X$-Lie algebroid locally free of finite rank over $\mathcal{O}_X$. Then, so is any Courant extension of $\mathcal{A}$. Suppose that $\mathcal{Q}_1,\ldots,\mathcal{Q}_n \in\CExt{\mathcal{A}}$, $\lambda_1,\ldots,\lambda_n \in \mathbb{C}$. For $\alpha_1,\ldots\alpha_n \in\Omega^1_X$ let $\ell(\alpha_1,\ldots\alpha_n) = \sum\limits_i \lambda_i\alpha_i$.

The diagram
\[
\begin{CD}
\pr_*\ev^*\left(\Omega^1_X\times\cdots\times\Omega^1_X\right) @>>> \pr_*\ev^*\Omega^1_X\times\cdots\times\pr_*\ev^*\Omega^1_X \\
@VVV @VVV \\
\pr_*\ev^*\left(\mathcal{Q}_1\times_\mathcal{A}\cdots\times_\mathcal{A}\mathcal{Q}_n\right) @>>> \pr_*\ev^*\mathcal{Q}_1\times_{\mathcal{A}^\sharp}\cdots\times_{\mathcal{A}^\sharp}\pr_*\ev^*\mathcal{Q}_n
\end{CD}
\]
is commutative with horizontal maps isomorphisms. Both squares in the diagram
\[
\begin{CD}
\pr_*\ev^*\left(\Omega^1_X\times\cdots\times\Omega^1_X\right) @>>> \pr_*\ev^*\left(\mathcal{Q}_1\times_\mathcal{A}\cdots\times_\mathcal{A}\mathcal{Q}_n\right) \\
@V{\pr_*\ev^*(\ell)}VV @VVV \\
\pr_*\ev^*\Omega^1_X @>>> \pr_*\ev^*\left(\opdp_{i=1}^n\lambda_i\mathcal{Q}_i\right) \\
@V{\int}VV @VVV \\
\mathcal{O}_{X^\sharp}[2] @>>> \tau\left(\opdp_{i=1}^n\lambda_i\mathcal{Q}_i\right)
\end{CD}
\]
are co-Cartesian, hence so is the outer one and the same holds for the diagram
\[
\begin{CD}
\pr_*\ev^*\Omega^1_X\times\cdots\times\pr_*\ev^*\Omega^1_X @>>> \pr_*\ev^*\mathcal{Q}_1\times_{\mathcal{A}^\sharp}\cdots\times_{\mathcal{A}^\sharp}\pr_*\ev^*\mathcal{Q}_n \\
@V{\int\times\cdots\times\int}VV @VVV \\
\mathcal{O}_{X^\sharp}[2]\times\cdots\times\mathcal{O}_{X^\sharp}[2] @>>> \tau\mathcal{Q}_1\times_{\mathcal{A}^\sharp}\cdots\times_{\mathcal{A}^\sharp}\tau\mathcal{Q}_n \\
@V{\ell}VV @VVV \\
\mathcal{O}_{X^\sharp}[2] @>>> \opdp_{i=1}^n\lambda_i\tau\mathcal{Q}_i
\end{CD}
\]
Since the diagram
\[
\begin{CD}
\pr_*\ev^*\left(\Omega^1_X\times\cdots\times\Omega^1_X\right) @>>> \pr_*\ev^*\Omega^1_X  @>>> \mathcal{O}_{X^\sharp}[2] \\
@VVV @VVV @VVV \\
\pr_*\ev^*\Omega^1_X\times\cdots\times\pr_*\ev^*\Omega^1_X @>>> \mathcal{O}_{X^\sharp}[2]\times\cdots\times\mathcal{O}_{X^\sharp}[2] @>>> \mathcal{O}_{X^\sharp}[2]
\end{CD}
\]
is commutative with vertical maps isomorphisms it follows that there is a canonical isomorphism
\[
\tau\left(\opdp_{i=1}^n\lambda_i\mathcal{Q}_i\right) \cong \opdp_{i=1}^n\lambda_i\tau\mathcal{Q}_i .
\]
We leave the details of the proof of the following lemma to the reader.
\begin{proposition}\label{lemma: tau and Q are linear}
The functors
\[
\tau \colon \CExt{\mathcal{A}}  \longrightarrow \OExt{2}(\mathcal{\mathcal{A}^\sharp})\ \colon\! \mathtt{Q}
\]
are morphisms of $\mathbb{C}$-vector spaces in categories (and, in particular, of Picard groupoids).
\end{proposition}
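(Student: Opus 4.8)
The plan is to first reduce the assertion to a single statement about $\tau$. By Proposition \ref{lemma: OExt and CExt} the functors $\tau$ and $\mathtt{Q} = \Cour$ are mutually quasi-inverse equivalences of categories; since a quasi-inverse of a strong symmetric-monoidal equivalence carries a canonical monoidal structure making it again such an equivalence, and since the $\mathbb{C}$-scaling is part of this structure, it suffices to equip $\tau$ with the data of a morphism of $\mathbb{C}$-vector spaces in categories and to verify the requisite coherence for $\tau$ alone. The structure on $\Cour$ is then inherited formally.

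The first piece of data is the family of comparison isomorphisms
\[
\tau\left(\opdp_{i=1}^n \lambda_i \mathcal{Q}_i\right) \xrightarrow{\ \sim\ } \opdp_{i=1}^n \lambda_i \tau\mathcal{Q}_i,
\]
which has already been produced from the nested co-Cartesian squares preceding the statement. The essential inputs there are that $\pr_*\ev^*$ carries the defining fiber product $\mathcal{Q}_1\times_\mathcal{A}\cdots\times_\mathcal{A}\mathcal{Q}_n$ to the corresponding fiber product over $\mathcal{A}^\sharp$ — this uses that $\mathcal{A}$ and the $\mathcal{Q}_i$ are locally free of finite rank, so the relevant sequences split locally and are preserved by $\mathcal{O}_{X^\sharp}[\epsilon]\otimes_{\mathcal{O}_X}(-)$ — together with the fact that the integration map $\int$ and the linear form $\ell$ fit into the displayed commuting diagram of push-out squares. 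I would next record naturality of the comparison isomorphism in the arguments $\mathcal{Q}_i$, which is immediate: $\pr_*\ev^*$ is a functor, $\int$ and $\ell$ depend only on the fixed data $(\lambda_i)$, and the formation of push-outs is functorial, so a morphism $\mathcal{Q}_i \to \mathcal{Q}_i'$ induces compatible maps on all corners of each square and hence on the push-outs.

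The remaining, and in my view the only genuinely delicate, point is compatibility of the comparison isomorphisms with the full algebraic structure of $\mathbb{C}$-vector spaces in categories: with iterated and reindexed linear combinations, with the zero object and the unit, and with the associativity and commutativity constraints for $\opdp$. Here one confronts a family of coherence diagrams rather than a single isomorphism. The approach I would take is to exploit that in \emph{both} $\CExt{\mathcal{A}}$ and $\OExt{2}(\mathcal{A}^\sharp)$ every linear combination is defined by a \emph{single} push-out along a map of the shape $\ell$, so that any composite of linear operations is again presented as one such push-out of an iterated fiber product. Since $\tau$ is built from $\pr_*\ev^*$, which commutes with the fiber products occurring at each stage, and since push-out along $\ell$ is intertwined with the map $\int$ on the nose, each coherence diagram for $\tau$ reduces to the uniqueness clause in the universal property of the corresponding push-out. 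Thus the coherences hold not by separate term-by-term verification but because all the objects in sight are characterized by the same universal properties, which $\tau$ preserves. I expect the bookkeeping of which fiber products and push-outs must be compared — rather than any conceptual obstruction — to be the main labor, and for precisely this reason the authors are justified in leaving the details to the reader.
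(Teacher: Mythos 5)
Your proposal is correct and takes essentially the same route as the paper: the paper's proof consists precisely of the comparison isomorphism $\tau\left(\opdp_{i=1}^n\lambda_i\mathcal{Q}_i\right) \cong \opdp_{i=1}^n\lambda_i\tau\mathcal{Q}_i$ built from the nested co-Cartesian squares displayed before the statement, with naturality and the coherence checks explicitly left to the reader, which is exactly what your plan organizes and fills in via the universal properties of the defining push-outs and fiber products. Your one addition --- obtaining the structure on $\mathtt{Q} = \Cour$ formally, as the quasi-inverse (by Proposition \ref{lemma: OExt and CExt}) of an equivalence already equipped with the structure, rather than verifying it directly --- is a clean and legitimate way to discharge that half of the statement.
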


\section{The inverse image functor}\label{section: The inverse image functor}

\subsection{Inverse image and fiber product for $\mathcal{O}$-modules}\label{subsection: condition C}
Suppose that $\phi \colon \mathfrak{Y} \to \mathfrak{X}$ is a morphism of DG manifolds and $A \xrightarrow{s} C \xleftarrow{t} B$ are morphisms in $\Mod{\mathcal{O}_\mathfrak{X}}$.
The sequence
\[
0 \to A\times_C B \to A\times B \xrightarrow{s-t} C
\]
is exact. Assume furthermore that $A$, $B$ and $C$ are locally free of finite rank. In what follows we will consider the following condition on morphisms $A \xrightarrow{s} C \xleftarrow{t} B$:
\begin{equation}\label{condition C}\tag{C}
\text{$A\times_C B = \ker(s-t)$ and $\coker(s-t)$ are locally free}
\end{equation}
Condition \eqref{condition C} is trivially fulfilled if at least one of the two maps $s$ and $t$ is an epimorphism.

\begin{lemma}\label{lemma: condition C}
Suppose that the morphisms $A \xrightarrow{s} C \xleftarrow{t} B$ satisfy the condition \eqref{condition C}. Then, the canonical morphism $\phi^*(A\times_C B) \to \phi^*A\times_{\phi^*C}\phi^*B$ is an isomorphism and the maps $\phi^*A \xrightarrow{\phi^*(s)} \phi^*C \xleftarrow{\phi^*(t)} \phi^*B$ satisfy the condition \eqref{condition C}.
\end{lemma}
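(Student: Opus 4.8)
The plan is to apply the functor $\phi^*$ to the left-exact sequence
\[
0 \to A\times_C B \to A\times B \xrightarrow{s-t} C
\]
that defines the fiber product, and to show that exactness on the left survives. The one genuine issue is that $\phi^*$ is only right exact, so I must extract from condition \eqref{condition C} enough local splitting to push the kernel through. Since both assertions to be proved---that the canonical map is an isomorphism, and that the pulled-back maps again satisfy \eqref{condition C}---are local on $\mathfrak{Y}$ and may be checked after restricting to opens over which the relevant modules are free, I would reduce to this local situation from the outset.

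First I would factor $s-t$ through its image. Writing $I := \im(s-t) \subseteq C$, I obtain two short exact sequences
\[
0 \to A\times_C B \to A\times B \xrightarrow{s-t} I \to 0, \qquad 0 \to I \to C \to \coker(s-t) \to 0 .
\]
Because condition \eqref{condition C} asserts that $\coker(s-t)$ is locally free, the second sequence splits locally; hence $I$ is locally a direct summand of the locally free module $C$ and is therefore itself locally free. Local freeness (projectivity) of $I$ then forces the first sequence to split locally as well.

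Then I would use that $\phi^*$, being additive, carries locally split short exact sequences to exact sequences. Applying $\phi^*$ to the two sequences above, and using $\im(\phi^*(s-t)) \cong \phi^*I$ together with the injectivity of $\phi^*I \to \phi^*C$, I conclude that $\ker(\phi^*(s-t)) = \phi^*(A\times_C B)$. Since $\phi^* = \mathcal{O}_\mathfrak{Y}\otimes_{\phi^{-1}\mathcal{O}_\mathfrak{X}}\phi^{-1}(-)$ is additive and commutes with finite products, one has $\phi^*(A\times B) = \phi^*A\times\phi^*B$ and $\phi^*(s-t) = \phi^*(s) - \phi^*(t)$; comparing with the defining sequence of $\phi^*A\times_{\phi^*C}\phi^*B$ identifies the canonical map $\phi^*(A\times_C B) \to \phi^*A\times_{\phi^*C}\phi^*B$ with the identity on $\ker(\phi^*(s)-\phi^*(t))$, whence it is an isomorphism.

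Finally, for condition \eqref{condition C} on the pulled-back maps: right exactness of $\phi^*$ gives $\coker(\phi^*(s)-\phi^*(t)) \cong \phi^*\coker(s-t)$, which is locally free because $\phi^*$ preserves local freeness; and $\phi^*A\times_{\phi^*C}\phi^*B \cong \phi^*(A\times_C B)$ is locally free for the same reason. I expect the main obstacle to be precisely this passage through right exactness: everything hinges on converting the hypothesis that $\coker(s-t)$ is locally free into local splittings, so that the non-exact functor $\phi^*$ nonetheless preserves the kernel. Once the local splittings are in hand, the remainder is formal.
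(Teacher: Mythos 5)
Your proof is correct and follows essentially the same route as the paper's: apply $\phi^*$ to the four-term exact sequence $0 \to A\times_C B \to A\times B \xrightarrow{s-t} C \to \coker(s-t) \to 0$ of locally free modules and compare with the sequence defining $\phi^*A\times_{\phi^*C}\phi^*B$. The paper simply asserts that the rows of the resulting diagram remain exact; your passage through local splittings---using local freeness of $\coker(s-t)$ to split off $\im(s-t)$ inside $C$, and then local freeness of $\im(s-t)$ to split the kernel sequence---supplies precisely the justification the paper leaves implicit, and your computation $\coker(\phi^*(s)-\phi^*(t)) \cong \phi^*\coker(s-t)$ makes explicit the second assertion (that the pulled-back maps again satisfy condition \eqref{condition C}), which the paper's proof does not spell out.
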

\begin{proof}
Applying the functor $\phi^*$ to the exact sequence
\[
0 \to A\times_C B \to A\times B \xrightarrow{s-t} C \to \coker(s-t) \to 0
\]
or locally free $\mathcal{O}_\mathfrak{X}$-modules we obtain the commutative diagram with exact rows
\[
\begin{CD}
0 @>>> \phi^*(A\times_C B) @>>> \phi^*(A\times B) @>>{\phi^*(s-t)}> \phi^*C @>>> \phi^*\coker(s-t) @>>> 0 \\
& & @VVV @VVV @VV{\id}V @VVV \\
0 @>>> \phi^*A\times_{\phi^*C} \phi^*B @>>> \phi^*A\times \phi^*B @>>{\phi^*s-\phi^*t}> \phi^*C @>>> \coker(\phi^*s-\phi^*t) @>>> 0
\end{CD}
\]
Since the map $\phi^*(A\times B) \to \phi^*A\times \phi^*B$ is an isomorphism it follows that so is the map $\phi^*(A\times_C B) \to \phi^*A\times_{\phi^*C} \phi^*B$.
\end{proof}

\subsection{$\mathcal{O}$-modules over $\mathcal{T}$}
Suppose that $\mathfrak{X} = (X,\mathcal{O}_\mathfrak{X})$ is a DG manifold.

We denote by $\Mod{\mathcal{O}_\mathfrak{X}}/\mathcal{T}_\mathfrak{X}$ the category of $\mathcal{O}_\mathfrak{X}$-modules over $\mathcal{T}_\mathfrak{X}$, i.e. the category of pairs $(\mathcal{E},\pi)$, where $\mathcal{E}\in \Mod{\mathcal{O}_\mathfrak{X}}$ and $\pi\colon \mathcal{E} \to \mathcal{T}_\mathfrak{X}$ is a morphism $\Mod{\mathcal{O}_\mathfrak{X}}$ in referred to as \emph{the anchor}.

A morphism $t \colon (\mathcal{E}_1,\pi_1) \to (\mathcal{E}_2,\pi_2)$ in $\Mod{\mathcal{O}_\mathfrak{X}}/\mathcal{T}_\mathfrak{X}$ is a morphism of $\mathcal{O}_\mathfrak{X}$-modules $t \colon \mathcal{E}_1 \to \mathcal{E}_2$ such that $\pi_2\circ t = \pi_1$.

The category $\Mod{\mathcal{O}_\mathfrak{X}}/\mathcal{T}_\mathfrak{X}$ has a terminal object, namely $\mathcal{T}_\mathfrak{X} := (\mathcal{T}_\mathfrak{X},\id)$. The product $(\mathcal{E}_1,\pi_1)\times(\mathcal{E}_2,\pi_2)$ is represented by $\mathcal{E}_1\times_{\mathcal{T}_\mathfrak{X}}\mathcal{E}_2 \to \mathcal{T}_\mathfrak{X}$.

We shall call $(\mathcal{E},\pi)$ \emph{transitive} if the anchor map is surjective.

\subsection{Inverse image}\label{subsection: inverse image}
Suppose that $\phi \colon \mathfrak{Y} \to \mathfrak{X}$ is a morphism of DG manifolds.

For $\mathcal{E} := (\mathcal{E},\pi) \in \Mod{\mathcal{O}_\mathfrak{X}}/\mathcal{T}_\mathfrak{X}$ the object $\phi^+\mathcal{E} \in  \Mod{\mathcal{O}_\mathfrak{Y}}/\mathcal{T}_\mathfrak{Y}$ is defined as the left vertical arrow  in the pull-back square
\begin{equation}\label{diag: inverse image over T}
\begin{CD}
\phi^+\mathcal{E} @>{\widetilde{d\phi}}>> \phi^*\mathcal{E} \\
@V{\phi^+(\pi)}VV @VV{\phi^*(\pi)}V \\
\mathcal{T}_\mathfrak{Y} @>{d\phi}>> \phi^*\mathcal{T}_\mathfrak{X}
\end{CD}
\end{equation}
which is to say $\phi^+\mathcal{E} = \mathcal{T}_\mathfrak{Y}\times_{\phi^*\mathcal{T}_\mathfrak{X}}\phi^*\mathcal{E}$.

A morphism $t \colon (\mathcal{E}_1,\pi_1) \to (\mathcal{E}_2,\pi_2)$ in $\Mod{\mathcal{O}_\mathfrak{X}}/\mathcal{T}_\mathfrak{X}$ induces the morphism $\phi^+(t) \colon \phi^+\mathcal{E}_1 \to \phi^+\mathcal{E}_2$ in $\Mod{\mathcal{O}_\mathfrak{Y}}/\mathcal{T}_\mathfrak{Y}$ and the above construction extends to a functor, called \emph{pull-back} or \emph{inverse image under $\phi$}
\[
\phi^+ \colon \Mod{\mathcal{O}_\mathfrak{X}}/\mathcal{T}_\mathfrak{X} \to \Mod{\mathcal{O}_\mathfrak{Y}}/\mathcal{T}_\mathfrak{Y} .
\]

The functor of inverse image preserves terminal objects, that is to say $\phi^+\mathcal{T}_\mathfrak{X} = \mathcal{T}_\mathfrak{Y}$.

The functor of inverse image preserves transitivity: if $\mathcal{E} := (\mathcal{E},\pi)$ is transitive then so is $\phi^+\mathcal{E}$.

By the universal property of $\phi^{*}$ there is a unique map $\phi^{*}(\mathcal{E}_1\times_{\phi^{*}\mathcal{T}_{\mathfrak{X}}}\mathcal{E}_{2}) \to \phi^{*}\mathcal{E}_1 \times_{\mathcal{T}_{\mathfrak{Y}}} \phi^{*}\mathcal{E}_2$. Since fiber products commute with products there is a unique isomorphism $\mathcal{T}_{\mathfrak{Y}}\times_{\phi^{*}\mathcal{T}_{\mathfrak{X}}} (\phi^{*}\mathcal{E}_1\times \phi^{*}\mathcal{E}_2) \to (\mathcal{T}_{\mathfrak{Y}}\times_{\phi^{*}\mathcal{T}_{\mathfrak{X}}} \phi^{*}\mathcal{E}_2) \times_{\mathcal{T}_{\mathfrak{Y}}} (\mathcal{T}_{\mathfrak{Y}}\times_{\phi^{*}\mathcal{T}_{\mathfrak{X}}} \phi^{*}\mathcal{E}_2)$.

The composition
\[
\mathcal{T}_{\mathfrak{Y}}\times_{\phi^{*}\mathcal{T}_{\mathfrak{X}}}\phi^{*}(\mathcal{E}_1\times_{\mathcal{T}_{\mathfrak{X}}}\mathcal{E}_2) \to \mathcal{T}_{\mathfrak{Y}}\times_{\phi^{*}\mathcal{T}_{\mathfrak{X}}} (\phi^{*}\mathcal{E}_1\times_{\phi^{*}\mathcal{T}_{\mathfrak{X}}} \phi^{*}\mathcal{E}_2) \to (\mathcal{T}_{\mathfrak{Y}}\times_{\phi^{*}\mathcal{T}_{\mathfrak{X}}} \phi^{*}\mathcal{E}_2) \times_{\mathcal{T}_{\mathfrak{Y}}} (\mathcal{T}_{\mathfrak{Y}}\times_{\phi^{*}\mathcal{T}_{\mathfrak{X}}} \phi^{*}\mathcal{E}_2)
\]
gives rise to the map
\begin{equation}\label{canonical-map-and-plus}
\phi^{+}(\mathcal{E}_1 \times_{\mathcal{T}_{\mathfrak{X}}}\mathcal{E}_2) \to \phi^{+}\mathcal{E}_1 \times_{\mathcal{T}_{\mathfrak{Y}}} \phi^{+}\mathcal{E}_2
\end{equation}

\begin{lemma}\label{lemma: pull-back prod}
Suppose that $(\mathcal{E}_i,\pi_i) \in \Mod{\mathcal{O}_\mathfrak{X}}/\mathcal{T}_\mathfrak{X}$, $i = 1,2$, are locally free. Assume that the maps $\mathcal{E}_1 \xrightarrow{\pi_1} \mathcal{T}_\mathfrak{X} \xleftarrow{\pi_2} \mathcal{E}_2$ satisfy condition \eqref{condition C}. Then, the canonical map \eqref{canonical-map-and-plus} is an isomorphism.
\end{lemma}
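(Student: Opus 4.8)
The plan is to recognize the canonical map \eqref{canonical-map-and-plus} as a composite of two maps and to show that each is an isomorphism: the first by appeal to Lemma \ref{lemma: condition C}, the second by the formal associativity of fiber products. Recall from the construction preceding the statement that, after substituting $\phi^+\mathcal{E}_i = \mathcal{T}_\mathfrak{Y}\times_{\phi^*\mathcal{T}_\mathfrak{X}}\phi^*\mathcal{E}_i$, the map \eqref{canonical-map-and-plus} is by definition the composite
\[
\mathcal{T}_\mathfrak{Y}\times_{\phi^*\mathcal{T}_\mathfrak{X}}\phi^*(\mathcal{E}_1\times_{\mathcal{T}_\mathfrak{X}}\mathcal{E}_2) \xrightarrow{\ u\ } \mathcal{T}_\mathfrak{Y}\times_{\phi^*\mathcal{T}_\mathfrak{X}}(\phi^*\mathcal{E}_1\times_{\phi^*\mathcal{T}_\mathfrak{X}}\phi^*\mathcal{E}_2) \xrightarrow{\ v\ } \phi^+\mathcal{E}_1\times_{\mathcal{T}_\mathfrak{Y}}\phi^+\mathcal{E}_2 ,
\]
where $u$ is obtained by applying the functor $\mathcal{T}_\mathfrak{Y}\times_{\phi^*\mathcal{T}_\mathfrak{X}}(-)$ to the canonical comparison map $\phi^*(\mathcal{E}_1\times_{\mathcal{T}_\mathfrak{X}}\mathcal{E}_2)\to\phi^*\mathcal{E}_1\times_{\phi^*\mathcal{T}_\mathfrak{X}}\phi^*\mathcal{E}_2$ and $v$ is the reassociation of the iterated fiber product.

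For the factor $u$, I would apply Lemma \ref{lemma: condition C} with $A = \mathcal{E}_1$, $B = \mathcal{E}_2$, $C = \mathcal{T}_\mathfrak{X}$, $s = \pi_1$ and $t = \pi_2$; the hypotheses that the $(\mathcal{E}_i,\pi_i)$ are locally free and that $\pi_1, \pi_2$ satisfy condition \eqref{condition C} are exactly what is assumed. The lemma then yields that the comparison map $\phi^*(\mathcal{E}_1\times_{\mathcal{T}_\mathfrak{X}}\mathcal{E}_2)\to\phi^*\mathcal{E}_1\times_{\phi^*\mathcal{T}_\mathfrak{X}}\phi^*\mathcal{E}_2$ is an isomorphism. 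Since the functor $\mathcal{T}_\mathfrak{Y}\times_{\phi^*\mathcal{T}_\mathfrak{X}}(-)$ carries isomorphisms to isomorphisms, it follows that $u$ is an isomorphism.

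For the factor $v$, I would observe that it is an instance of the standard isomorphism expressing the associativity and commutativity of fiber products: both its source and its target are canonically the limit of the single diagram $\phi^*\mathcal{E}_1\xrightarrow{\phi^*\pi_1}\phi^*\mathcal{T}_\mathfrak{X}\xleftarrow{d\phi}\mathcal{T}_\mathfrak{Y}\xrightarrow{d\phi}\phi^*\mathcal{T}_\mathfrak{X}\xleftarrow{\phi^*\pi_2}\phi^*\mathcal{E}_2$, so $v$ is an isomorphism by the uniqueness of limits. As a composite of two isomorphisms, \eqref{canonical-map-and-plus} is then itself an isomorphism. The only step carrying genuine content is the first one: local freeness together with condition \eqref{condition C} is precisely what is needed to guarantee that $\phi^*$ commutes with the fiber product $\mathcal{E}_1\times_{\mathcal{T}_\mathfrak{X}}\mathcal{E}_2$, and everything else is a formal rearrangement of limits.
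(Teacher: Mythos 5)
Your proof is correct and follows essentially the same route as the paper's: the canonical map \eqref{canonical-map-and-plus} is by construction the composite you describe, the nontrivial factor is handled by Lemma \ref{lemma: condition C} exactly as the paper does, and the remaining factor is the formal reassociation of fiber products already noted in the construction. Your explicit factorization into $u$ and $v$ is in fact a cleaner write-up of the paper's argument, which instead states the comparison isomorphism and verifies compatibility through three commutative diagrams.
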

\begin{proof}
Lemma \ref{lemma: condition C} implies that the canonical morphism
$\phi^*(\mathcal{E}_1\times_{\mathcal{T}_{\mathfrak{X}}}
\mathcal{E}_2) \to
\phi^*\mathcal{E}_1\times_{\mathcal{T}_{\mathfrak{Y}}}\phi^*\mathcal{E}_2$
is an isomorphism. Then the map \eqref{canonical-map-and-plus} is an isomorphism. Moreover, the diagrams
\[
\xymatrix{ \phi^{+}(\mathcal{E}_1\times_{\mathcal{T}_{\mathfrak{X}}}\mathcal{E}_2) \ar[r]\ar[d] & \phi^{*}(\mathcal{E}_1\times_{\mathcal{T}_{\mathfrak{X}}}\mathcal{E}_2)\ar[d] \\ \phi^{+}\mathcal{E}_1\times_{\mathcal{T}_{\mathfrak{Y}}}\phi^{+}\mathcal{E}_2 \ar[r] & \phi^{*}\mathcal{E}_1\times_{\phi^{*}\mathcal{T}_{\mathfrak{X}}}\phi^{*}\mathcal{E}_2 }
\]
\[
\xymatrix{ \phi^{+}(\mathcal{E}_1\times_{\mathcal{T}_{\mathfrak{X}}}\mathcal{E}_2) \ar[rr]\ar[rd] & & \phi^{*}\mathcal{E}_1\times_{\phi^{*}\mathcal{T}_{\mathfrak{X}}}\phi^{*}\mathcal{E}_2 \ar[ld] \\ & \mathcal{T}_{\mathfrak{Y}} &  }
\]
\[
\xymatrix{ \phi^{*}(\mathcal{E}_1\times_{\mathcal{T}_{\mathfrak{X}}}\mathcal{E}_2) \ar[rr]\ar[rd] & & \phi^{*}\mathcal{E}_1\times_{\mathcal{T}_{\mathfrak{Y}}}\phi^{*}\mathcal{E}_2 \ar[ld] \\ & \phi^{*}\mathcal{T}_{\mathfrak{X}} & }
\]
are commutative and the result follows.
\end{proof}

\subsection{Inverse image and composition}\label{Inverse and composition}
Suppose given a map $\psi \colon \mathfrak{Z} \to \mathfrak{Y}$. Applying $\psi^*$ to \eqref{diag: inverse image over T} we obtain the commutative diagram
\begin{equation}\label{diag: pull-back of inverse image}
\begin{CD}
\psi^*\phi^+\mathcal{E} @>{\psi^*(\widetilde{d\phi})}>> (\phi\circ\psi)^*\mathcal{E} \\
@V{\psi^{*}\psi^+(\pi)}VV @VV{(\phi\circ\psi)^*(\pi)}V \\
\psi^*\mathcal{T}_\mathfrak{Y} @>{\psi^*(d\phi)}>> (\phi\circ\psi)^*\mathcal{T}_\mathfrak{X}
\end{CD}
\end{equation}
which is not Cartesian in general. Combining the definition of $\psi^+$ with \eqref{diag: pull-back of inverse image} we obtain the commutative diagram
\begin{equation}\label{diag: inverse image of inverse image}
\begin{CD}
\psi^+\phi^+\mathcal{E} @>{\widetilde{d\psi}}>> \psi^*\phi^+\mathcal{E} @>{\psi^*(\widetilde{d\phi})}>> (\phi\circ\psi)^*\mathcal{E} \\
@V{\psi^+\phi^+(\pi)}VV @V{\psi^*\phi^+(\pi)}VV @VV{\psi^*\phi^*(\pi)}V \\
\mathcal{T}_\mathfrak{Z} @>{d\psi}>> \psi^*\mathcal{T}_\mathfrak{Y} @>{\psi^*(d\phi)}>> (\phi\circ\psi)^*\mathcal{T}_\mathfrak{X}
\end{CD}
\end{equation}

On the other hand, the inverse image functor under the map $\phi\circ\psi$ is defined by the pullback diagram
\[
\begin{CD}
(\phi\circ\psi)^+\mathcal{E} @>{\widetilde{d(\phi\circ\psi)}}>> (\phi\circ\psi)^*\mathcal{E} \\
@V{(\phi\circ\psi)^+(\pi)}VV @VV{(\phi\circ\psi)^*(\pi)}V \\
\mathcal{T}_\mathfrak{Z} @>{d(\phi\circ\psi)}>> (\phi\circ\psi)^*\mathcal{T}_\mathfrak{X}
\end{CD}
\]
Since $\psi^*(d\phi)\circ d\psi = d(\phi\circ\psi)$ the universal property of pull-back provides the canonical map
\begin{equation}\label{transitivity of inverse image}
c^+_{\phi,\psi} \colon \psi^+\phi^+\mathcal{E} \to (\phi\circ\psi)^+\mathcal{E}
\end{equation}
which satisfies
\[
\widetilde{d(\phi\circ\psi)}\circ c^{+}_{\phi,\psi}=\psi^*(\widetilde{d\phi})\circ\widetilde{d\psi}
\]
\[
(\phi\circ\psi)^*(\pi)\circ c^+_{\phi,\psi}=\psi^+\phi^+(\pi).
\]
In general the canonical map $c^{+}_{\phi,\psi}$ is not an isomorphism.

\begin{lemma}\label{lemma: one submersion iso}
Suppose that $(\mathcal{E},\pi) \in \Mod{\mathcal{O}_\mathfrak{X}}/\mathcal{T}_\mathfrak{X}$ is locally free and the maps $\mathcal{T}_\mathfrak{Y} \xrightarrow{d\phi} \phi^* \mathcal{T}_\mathfrak{X} \xleftarrow{\phi^*(\pi)} \phi^*\mathcal{E}$ satisfy condition \eqref{condition C}. Then, the map \eqref{transitivity of inverse image} is an isomorphism.
\end{lemma}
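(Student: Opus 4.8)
The plan is to realize $c^+_{\phi,\psi}$ as the comparison map produced by the pasting law for pullback squares, after upgrading the right-hand square of \eqref{diag: inverse image of inverse image} to a genuine pullback by means of condition \eqref{condition C}. The only nontrivial input is Lemma \ref{lemma: condition C}; everything else is a formal manipulation of Cartesian squares.

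First I would record that the defining square \eqref{diag: inverse image over T} exhibits $\phi^+\mathcal{E}$ as the fiber product $\mathcal{T}_\mathfrak{Y}\times_{\phi^*\mathcal{T}_\mathfrak{X}}\phi^*\mathcal{E}$ of the two maps $\mathcal{T}_\mathfrak{Y}\xrightarrow{d\phi}\phi^*\mathcal{T}_\mathfrak{X}\xleftarrow{\phi^*(\pi)}\phi^*\mathcal{E}$. These are exactly the maps assumed to satisfy \eqref{condition C}, and their three terms are locally free of finite rank (as $\mathcal{E}$ is locally free and $\phi^*$ preserves local freeness, while $\mathcal{T}_\mathfrak{Y}$ and $\phi^*\mathcal{T}_\mathfrak{X}$ are locally free). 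Applying Lemma \ref{lemma: condition C} to the morphism $\psi$ and using the functoriality $\psi^*\phi^*\cong(\phi\circ\psi)^*$ then shows that the canonical map
\[
\psi^*(\phi^+\mathcal{E}) \to \psi^*\mathcal{T}_\mathfrak{Y}\times_{(\phi\circ\psi)^*\mathcal{T}_\mathfrak{X}}(\phi\circ\psi)^*\mathcal{E}
\]
is an isomorphism. In other words, the right-hand square of \eqref{diag: inverse image of inverse image}, namely \eqref{diag: pull-back of inverse image}, which is not Cartesian in general, \emph{becomes} Cartesian under \eqref{condition C}.

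Next I would invoke the pasting law. In diagram \eqref{diag: inverse image of inverse image} the left-hand square is Cartesian by the very definition of $\psi^+$ applied to $\phi^+\mathcal{E}$, and the right-hand square is Cartesian by the previous step; hence the outer rectangle is Cartesian. Its bottom composite is $\psi^*(d\phi)\circ d\psi = d(\phi\circ\psi)$, so it exhibits $\psi^+\phi^+\mathcal{E}$ as $\mathcal{T}_\mathfrak{Z}\times_{(\phi\circ\psi)^*\mathcal{T}_\mathfrak{X}}(\phi\circ\psi)^*\mathcal{E}$, which is precisely the pullback defining $(\phi\circ\psi)^+\mathcal{E}$.

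Finally I would identify the resulting isomorphism with $c^+_{\phi,\psi}$. Both are morphisms $\psi^+\phi^+\mathcal{E}\to(\phi\circ\psi)^+\mathcal{E}$, and both are characterized by the same pair of compositions $\widetilde{d(\phi\circ\psi)}\circ(-)=\psi^*(\widetilde{d\phi})\circ\widetilde{d\psi}$ and $(\phi\circ\psi)^*(\pi)\circ(-)=\psi^+\phi^+(\pi)$; by the uniqueness clause in the universal property of the pullback $(\phi\circ\psi)^+\mathcal{E}$ they coincide, whence $c^+_{\phi,\psi}$ is an isomorphism. The crux of the argument is the first step: without \eqref{condition C} the functor $\psi^*$ need not commute with the fiber product defining $\phi^+\mathcal{E}$, the square \eqref{diag: pull-back of inverse image} fails to be Cartesian, and the pasting law is unavailable.
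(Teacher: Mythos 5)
Your proposal is correct and follows essentially the same route as the paper's proof: apply Lemma \ref{lemma: condition C} to make the square \eqref{diag: pull-back of inverse image} Cartesian, then paste it with the defining square of $\psi^+$ to conclude the outer rectangle in \eqref{diag: inverse image of inverse image} is Cartesian. Your final step identifying the resulting comparison map with $c^+_{\phi,\psi}$ via the uniqueness clause of the universal property is left implicit in the paper, and spelling it out is a reasonable refinement rather than a departure.
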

\begin{proof}
The assumptions and Lemma \ref{lemma: condition C} imply that the diagram \eqref{diag: pull-back of inverse image} is Cartesian. Thus, both small squares in \eqref{diag: inverse image of inverse image} are Cartesian hence so is their composition.
\end{proof}

Suppose given yet another map $\xi \colon \mathfrak{W} \to \mathfrak{Z}$. In this case both compositions
\[
\mathfrak{W}\xrightarrow{\psi\circ\xi}\mathfrak{Y}\xrightarrow{\phi}\mathfrak{X},\;\;
\mathfrak{W}\xrightarrow{\xi}\mathfrak{Z}\xrightarrow{\phi\circ\psi}\mathfrak{X}
\]
provide respectively the canonical maps

\begin{equation}\label{composition-1}
c^{+}_{\phi,\psi\circ\xi} \colon (\psi\circ\phi)^{+}\phi^+\mathcal{E} \to (\phi\circ\psi\circ\xi)^{+}\mathcal{E}
\end{equation}

\begin{equation}\label{composition-2}
c^{+}_{\phi\circ\psi,\xi} \colon \xi^+(\phi\circ\psi)^+\mathcal{E} \to (\phi\circ\psi\circ\xi)^+\mathcal{E}
\end{equation}
These two maps satisfy similar compatibility conditions as the map \eqref{transitivity of inverse image}, namely
\[
\widetilde{d(\phi\circ\psi\circ\xi)}\circ c^{+}_{\phi,\psi\circ\xi} = (\psi\circ\xi)^{*}(\widetilde{d\phi})\circ\widetilde{d(\psi\circ\xi)}
\]
\[
(\phi\circ\psi\circ\xi)^+(\pi)\circ c^{+}_{\phi,\psi\circ\xi} = (\psi\circ\xi)^+\phi^+(\pi)
\]
and
\[
\widetilde{d(\phi\circ\psi\circ\xi)}\circ c^{+}_{\phi\circ\psi,\xi} = \xi^{*}(\widetilde{d(\phi\circ\psi)})\circ\widetilde{d(\xi)}
\]
\[
(\phi\circ\psi\circ\xi)^+(\pi)\circ c^{+}_{\phi\circ\psi,\xi} = \xi^+(\phi\circ\psi)^+(\pi)
\]
Applying the functor $\xi^{+}$ to the map \eqref{transitivity of inverse image} we obtain the map

\begin{equation}\label{triple composition of inverse image}
\xi^+(c^+_{\phi,\psi}) \colon \xi^+\psi^+\phi^+\mathcal{E} \to \xi^+(\phi\circ\psi)^+\mathcal{E}
\end{equation}
which satisfies
\[
\xi^+\psi^+\phi^+(\pi)=\xi^+(\phi\circ\psi)^+(\pi)\circ\xi^+(c^+_{\phi,\psi}).
\]
The inverse image functor $\xi^+$ on the object $\left(\psi^+\phi^+\mathcal{E},\psi^+\phi^+(\pi)\right)\in\Mod{\mathcal{O}_\mathfrak{Z}}/\mathcal{T}_\mathfrak{Z}$ is defined by the pullback square
\[
\begin{CD}
\xi^+\psi^+\phi^+\mathcal{E} @>{\widetilde{d\xi}}>> \xi^*\psi^+\phi^+\mathcal{E} \\
@V{\xi^+\psi^+\phi^+(\pi)}VV @VV{\xi^*\psi^+\phi^+(\pi)}V \\
\mathcal{T}_{\mathfrak{W}} @>{d\xi}>> \xi^*\mathcal{T}_{\mathfrak{Z}}
\end{CD}
\]
The commutative diagram,
\[
\begin{tikzcd}
\xi^+\psi^+\phi^+\mathcal{E}\arrow[r, "\widetilde{d\xi}"]\arrow[d, "\xi^+\psi^+\phi^+(\pi)"'] & \xi^*\psi^+\phi^+\mathcal{E} \arrow[d, "\xi^*\psi^+\phi^+(\pi)"] \arrow[r, "\xi^{*}(\widetilde{d\psi})"] & \xi^*\psi^*\phi^+\mathcal{E}\arrow[r, "\xi^*\psi^*\widetilde{(d\phi)}"]\arrow[d, "\xi^*\psi^*\phi^+(\pi)"] & \xi^*\psi^*\phi^*\mathcal{E}\cong(\phi\circ\psi\circ\xi)^*\mathcal{E} \arrow[d, "(\phi\circ\psi\circ\xi)^{*}(\pi)"] \\
\mathcal{T}_{\mathfrak{W}}\arrow[r, "d\xi"'] & \xi^*\mathcal{T}_{\mathfrak{Z}} \arrow[r, "\xi^*(d\psi)"'] & \xi^*\psi^*\mathcal{T}_{\mathfrak{Y}}\arrow[r, "\xi^*\psi^*(d\phi)"'] & \xi^*\psi^*\phi^*\mathcal{T}_{\mathfrak{X}}\cong(\phi\circ\psi\circ\xi)^*\mathcal{T}_{\mathfrak{X}}
\end{tikzcd}
\]
and the universal property of pullback provide the canonical map
\[
\xi^+\psi^+\phi^+\mathcal{E} \to (\phi\circ\psi\circ\xi)^+\mathcal{E}.
\]

\begin{lemma}
The identity
\[
c^+_{\phi\circ\psi,\xi}\circ \xi^+(c^+_{\phi,\psi}) = c^+_{\phi,\psi\circ\xi}\circ c^+_{\psi,\xi} \colon \xi^+\psi^+\phi^+\mathcal{E} \to (\phi\circ\psi\circ\xi)^+\mathcal{E}
\]
holds.
\end{lemma}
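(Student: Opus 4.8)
The plan is to invoke the universal property of the fiber product defining the target. Since
\[
(\phi\circ\psi\circ\xi)^+\mathcal{E} = \mathcal{T}_\mathfrak{W}\times_{(\phi\circ\psi\circ\xi)^*\mathcal{T}_\mathfrak{X}}(\phi\circ\psi\circ\xi)^*\mathcal{E},
\]
a morphism with target $(\phi\circ\psi\circ\xi)^+\mathcal{E}$ is determined by its two composites: with the anchor $(\phi\circ\psi\circ\xi)^+(\pi)$ to $\mathcal{T}_\mathfrak{W}$, and with $\widetilde{d(\phi\circ\psi\circ\xi)}$ to $(\phi\circ\psi\circ\xi)^*\mathcal{E}$. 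I would therefore show that the two composites in the statement agree after applying each of these two projections.

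The projection to $\mathcal{T}_\mathfrak{W}$ is automatic. Both $c^+_{\phi\circ\psi,\xi}\circ \xi^+(c^+_{\phi,\psi})$ and $c^+_{\phi,\psi\circ\xi}\circ c^+_{\psi,\xi}$ are composites of morphisms in $\Mod{\mathcal{O}_\mathfrak{W}}/\mathcal{T}_\mathfrak{W}$ (each $c^+$ is such a morphism by construction, and $\xi^+$ preserves morphisms over $\mathcal{T}$), hence are themselves morphisms over $\mathcal{T}_\mathfrak{W}$; composing either with $(\phi\circ\psi\circ\xi)^+(\pi)$ returns the anchor $\xi^+\psi^+\phi^+(\pi)$ of the common source. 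So this projection carries no information.

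For the projection $\widetilde{d(\phi\circ\psi\circ\xi)}$ I would compute both composites and check each equals $\xi^*\psi^*(\widetilde{d\phi})\circ\xi^*(\widetilde{d\psi})\circ\widetilde{d\xi}$, i.e. the composite along the top row of the commutative diagram immediately preceding the lemma. For the left-hand side I apply, in order, the compatibility $\widetilde{d(\phi\circ\psi\circ\xi)}\circ c^+_{\phi\circ\psi,\xi}=\xi^*(\widetilde{d(\phi\circ\psi)})\circ\widetilde{d\xi}$; then the naturality of the transformation $\widetilde{d\xi}$ evaluated on the morphism $c^+_{\phi,\psi}$, namely $\widetilde{d\xi}\circ\xi^+(c^+_{\phi,\psi})=\xi^*(c^+_{\phi,\psi})\circ\widetilde{d\xi}$ (this is precisely the naturality square of the top arrow of the pullback square defining $\xi^+$); and finally the $\xi^*$-pullback of $\widetilde{d(\phi\circ\psi)}\circ c^+_{\phi,\psi}=\psi^*(\widetilde{d\phi})\circ\widetilde{d\psi}$. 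For the right-hand side I apply the compatibility for $c^+_{\phi,\psi\circ\xi}$ followed by that for $c^+_{\psi,\xi}$, arriving at $(\psi\circ\xi)^*(\widetilde{d\phi})\circ\xi^*(\widetilde{d\psi})\circ\widetilde{d\xi}$.

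The two resulting expressions coincide once $(\psi\circ\xi)^*$ is identified with $\xi^*\psi^*$. The main obstacle is exactly this bookkeeping: keeping the canonical coherence isomorphisms $\xi^*\psi^*\cong(\psi\circ\xi)^*$ and $\xi^*\psi^*\phi^*\cong(\phi\circ\psi\circ\xi)^*$ straight throughout the reductions, and confirming that the naturality of $\widetilde{d\xi}$ is applied to the correct morphism $c^+_{\phi,\psi}\in\Mod{\mathcal{O}_\mathfrak{Z}}/\mathcal{T}_\mathfrak{Z}$. With both projections shown to agree, uniqueness in the universal property of the pullback forces the two composites to be equal, which is the asserted identity.
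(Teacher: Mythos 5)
Your proposal is correct and follows essentially the same route as the paper: both reduce the identity to the universal property of the pullback defining $(\phi\circ\psi\circ\xi)^+\mathcal{E}$ and verify that the two composites agree after postcomposition with $\widetilde{d(\phi\circ\psi\circ\xi)}$ and with the anchor, using the stated compatibilities of the maps $c^+$ together with the naturality of $\widetilde{d\xi}$ (which the paper uses implicitly in its second displayed computation). The only cosmetic difference is that you dispatch the anchor component by the soft observation that all maps involved live over $\mathcal{T}_\mathfrak{W}$, where the paper spells out the same fact by explicit computation.
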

\begin{proof}
It is enough to check that both compositions make the following diagram
\[
\begin{tikzcd}
\xi^+\psi^+\phi^+\mathcal{E}\arrow[drr, bend left,"\xi^*\psi^*(\widetilde{d\phi})\circ\xi^*(\widetilde{d\psi})\circ\widetilde{d\xi}"]\arrow[ddr, bend right, "\xi^+\psi^+\phi^+(\pi)"']\arrow[dr, dotted] & & \\
 & (\phi\circ\psi\circ\xi)^+\mathcal{E}\arrow[r,"\widetilde{d(\phi\circ\psi\circ\xi)}"]\arrow[d,"(\phi\circ\psi\circ\xi)^+(\pi)"] & (\phi\circ\psi\circ\xi)^*\mathcal{E}\arrow[d, "(\phi\circ\psi\circ\xi)^*(\pi)"] \\
 & \mathcal{T}_{\mathfrak{W}}\arrow[r, "d(\phi\circ\psi\circ\xi)"'] & (\phi\circ\psi\circ\xi)^*\mathcal{T}_{\mathfrak{X}}
\end{tikzcd}
\]
commutative. The computations
\begin{eqnarray*}
\widetilde{d(\phi\circ\psi\circ\xi)}\circ c^+_{\phi,\psi\circ\xi}\circ c^+_{\psi,\xi} & = & \xi^*\psi^*(\widetilde{d\phi})\circ\widetilde{d(\psi\circ\xi)}\circ c^+_{\psi,\xi} \\
 & = & (\psi\circ\xi)^{*}(\widetilde{d\phi})\circ\xi^{*}\widetilde{d\xi}
\end{eqnarray*}
and
\begin{eqnarray*}
\widetilde{d(\phi\circ\psi\circ\xi)}\circ c^+_{\phi\circ\psi,\xi}\circ \xi^+(c^+_{\phi,\psi}) & = & \xi^*(\widetilde{d(\phi\circ\psi)})\circ\widetilde{d\xi}\circ \xi^+(c^+_{\phi,\psi}) \\
& = & \xi^*\psi^*(\widetilde{d\phi})\circ\xi^*(\widetilde{d\psi})\circ\widetilde{d\xi}
\end{eqnarray*}
show that $\widetilde{d(\phi\circ\psi\circ\xi)}\circ c^+_{\phi,\psi\circ\xi}\circ c^+_{\psi,\xi} =\widetilde{d(\phi\circ\psi\circ\xi)}\circ c^+_{\phi\circ\psi,\xi}\circ \xi^+(c^+_{\phi,\psi})$. Similarly, the calculations
\begin{eqnarray*}
(\phi\circ\psi\circ\xi)^+(\pi)\circ c^+_{\phi,\psi\circ\xi}\circ c^+_{\psi,\xi} & = & (\psi\circ\xi)^+\circ\phi^+(\pi)\circ c^+_{\psi,\xi} \\
 & = & \xi^+\psi^+\phi^+(\pi)
\end{eqnarray*}
and
\begin{eqnarray*}
(\phi\circ\psi\circ\xi)^+(\pi)\circ c^+_{\phi\circ\psi,\xi}\circ \xi^+(c^+_{\phi,\psi} & = &  \xi^+(\phi\circ\psi)^+(\pi)\circ c^+_{\psi,\xi} \\
 & = & \xi^+\psi^+\phi^+(\pi)
\end{eqnarray*}
show that $(\phi\circ\psi\circ\xi)^+(\pi)\circ c^+_{\phi,\psi\circ\xi}\circ c^+_{\psi,\xi} = (\phi\circ\psi\circ\xi)^+(\pi)\circ c^+_{\phi\circ\psi,\xi}\circ \xi^+(c^+_{\phi,\psi})$. Therefore $c^+_{\phi\circ\psi,\xi}\circ \xi^+(c^+_{\phi,\psi}) = c^+_{\phi,\psi\circ\xi}\circ c^+_{\psi,\xi}$.
\end{proof}

\section{Localization and descent}\label{section: Localization and descent}
Our exposition is inspired by \citep{BB}.

\subsection{General framework}\label{subsection: general framework}
We denote by $\Man$ the category of manifolds. For $X \in \Man$ we denote by $X_{sm}$ the full subcategory of $\Man/X$ with objects $P \xrightarrow{\pi_P} X$, where $\pi_P$ is a submersion. The category $X_{sm}$ is equipped with the Grothendieck topology (the ``smooth" topology) with covers given by surjective submersions.

We assume given a category $\mathcal{P}$ equipped with a functor $\mathtt{p} \colon \mathcal{P} \to \Man$;  for a manifold $X$ we denote by $\mathcal{P}_X$ the corresponding fiber, i.e. the full subcategory with objects $\mathcal{F}\in\mathcal{P}$ such that $\mathtt{p}(\mathcal{F}) = X$. We assume that the functor $\mathtt{p}$ makes $\mathcal{P}$ a category prefibered over $\Man$. This means, by definition, that
\begin{itemize}
\item For a morphism $Y \xrightarrow{f} X$ in $\Man$ there is a functor $f^\blacktriangledown \colon \mathcal{P}_X \to \mathcal{P}_Y$ of inverse image along $f$

\item For a pair of composable morphisms $Z \xrightarrow{g} Y \xrightarrow{f} X$ there is a morphism of functors $c^\blacktriangledown_{f,g} \colon g^\blacktriangledown f^\blacktriangledown \to (f\circ g)^\blacktriangledown$
\end{itemize}
which satisfy the associativity constraint
\[
c^\blacktriangledown_{f\circ g,h}\circ h^\blacktriangledown(c^\blacktriangledown_{f,g}) = c^\blacktriangledown_{f,g\circ h}\circ c^\blacktriangledown_{g,h} \colon h^\blacktriangledown g^\blacktriangledown f^\blacktriangledown\mathcal{F} \to (f\circ g\circ h)^\blacktriangledown\mathcal{F} \ ,
\]
for any triple of composable morphisms $W \xrightarrow{h} Z \xrightarrow{g} Y \xrightarrow{f} X$ and any $\mathcal{F}\in\mathcal{P}_X$.

In addition, for each manifold $X$ we assume given a full subcategory $\mathcal{P}^\flat_X$ of $\mathcal{P}_X$. We assume that, if $Y \xrightarrow{f} X$ is a submersion and $\mathcal{F} \in \mathcal{P}^\flat_X$ then $f^\blacktriangledown\mathcal{F} \in \mathcal{P}^\flat_Y$, and for $g$ arbitrary the morphism  $c^\blacktriangledown_{f,g} \colon g^\blacktriangledown f^\blacktriangledown\mathcal{F} \to (f\circ g)^\blacktriangledown\mathcal{F}$ is an isomorphism.

\begin{example}{\ }
\begin{enumerate}
\item Let $\left(\Mod{\mathcal{O}}/\mathcal{T}\right)^\sharp$ denote the category with objects pairs $(X,\mathcal{E})$, where $X$ is a manifold and $\mathcal{E} \in \Mod{\mathcal{O}_{X^\sharp}}/\mathcal{T}_{X^\sharp}$. A morphism $u \colon (Y,\mathcal{F}) \to (X,\mathcal{E})$ is a pair $u=(f, t)$, where $f \colon Y \to X$ is a map of manifolds and $t \colon \mathcal{F} \to f^{\sharp +}\mathcal{E}$ is a morphism in $\Mod{\mathcal{O}_{X^\sharp}}/\mathcal{T}_{X^\sharp}$.

The functor
\begin{equation}\label{mods over T to man}
\left(\Mod{\mathcal{O}}/\mathcal{T}\right)^\sharp \to \Man \colon
(X,\mathcal{E}) \mapsto X, (f, t) \mapsto f
\end{equation}
makes $\left(\Mod{\mathcal{O}}/\mathcal{T}\right)^\sharp$ a prefibered category over the category $\Man$ of manifolds.

Let $\left(\Mod{\mathcal{O}}^\mathtt{lf}/\mathcal{T}\right)^\sharp_X$ denote the full subcategory of $\left(\Mod{\mathcal{O}}/\mathcal{T}\right)^\sharp_X$ with objects locally free of finite rank over $\mathcal{O}_{X^\sharp}$.

This is an example of the framework of \ref{subsection: general framework} with $\mathcal{P} = \left(\Mod{\mathcal{O}}/\mathcal{T}\right)^\sharp$, $\mathcal{P}^\flat_X = \left(\Mod{\mathcal{O}}^\mathtt{lf}/\mathcal{T}\right)^\sharp_X$ and the functor of inverse image defined in \ref{subsection: inverse image}.

\item Lie algebroids, see \ref{subsection: Smooth localization for Lie algebroids}

\item Marked Lie algebroids, see \ref{subsection: Smooth localization for marked Lie algebroids}

\item Courant algebroids, see \ref{subsection: Smooth localization for Courant algebroids}
\end{enumerate}

\end{example}

\subsection{The category of descent data}
For $P \xrightarrow{\pi_P} X$ in $X_{sm}$ consider the diagram
\[
P\times_X P\times_X P \triplerightarrow{\pi_{ij}}{} P\times_X P  \doublerightarrow{\pi_i}{}
P \xrightarrow{\pi_P}  X
\]
where the maps $\pi_i$, $i=1,2$ (respectively, $\pi_{ij}$, $1\leqslant i < j \leqslant 3$) denote projections onto the $i^{\text{th}}$  (respectively, $i^{\text{th}}$ and $j^{\text{th}}$) factor.

We denote by $\Desc(P;\mathcal{P}^\flat) = \Desc(P \xrightarrow{\pi_P} X;\mathcal{P}^\flat)$ the category with objects pairs $(\mathcal{F}, g_\mathcal{F})$, where
\begin{itemize}
\item $\mathcal{F} \in \mathcal{P}^\flat_P$;

\item $g_\mathcal{F} \colon \pi_2^\blacktriangledown\mathcal{F} \to \pi_1^\blacktriangledown\mathcal{F}$ is an isomorphism which satisfies the cocycle condition $\pi_{12}^\blacktriangledown(g_\mathcal{F})\circ\pi_{23}^\blacktriangledown(g_\mathcal{F}) = \pi_{13}^\blacktriangledown(g_\mathcal{F})$.
\end{itemize}
A morphism $t \colon (\mathcal{F}, g_\mathcal{F}) \to (\mathcal{F}^\prime, g_{\mathcal{F}^\prime})$ in $\Desc(P;\mathcal{P}^\flat)$ is a morphism $t \colon \mathcal{F} \to \mathcal{F}^\prime$ in $\mathcal{P}^\flat_P$ which satisfies $\pi_1^{\prime\blacktriangledown}(t) \circ g_{\mathcal{F}^\prime} = g_{\mathcal{F}} \circ \pi_2^\blacktriangledown(t)$.

The assignment $(\mathcal{F}, g_\mathcal{F}) \mapsto \mathcal{F}$ defines a functor $\Desc(P;\mathcal{P}^\flat) \to \mathcal{P}^\flat_P$.

\subsection{Localization}
For $\mathcal{E}\in\mathcal{P}^\flat_X$ and $P \xrightarrow{\pi_P} X$ in $X_{sm}$ let $\mathcal{E}_P := \pi_P^\blacktriangledown\mathcal{E} \in \mathcal{P}^\flat_P$. If $Q \xrightarrow{f} P$ is a morphism in $X_{sm}$, the map $c_{\pi_P,f} \colon \mathcal{E}_Q \to \mathcal{E}_P$ is an isomorphism. Therefore, the assignment $X_{sm} \ni P \mapsto \mathcal{E}_P$ defines a Cartesian section of $\mathcal{P}/X_{sm}$.

The functor $\pi_P^\blacktriangledown \colon \mathcal{P}^\flat_X \to \mathcal{P}^\flat_P$ lifts to the functor
\begin{equation}\label{localization to descent data}
\widetilde{\pi_P^\blacktriangledown} \colon \mathcal{P}_X \to \Desc(P;\mathcal{P}^\flat)
\end{equation}
defined by $\mathcal{E} \mapsto (\mathcal{E}_P, g_{\mathcal{E}_P})$, where $g_{\mathcal{E}_P}$ is the composition of the canonical isomorphisms $\pi_2^\blacktriangledown\mathcal{E}_P \cong \mathcal{E}_{P\times_X P} \cong \pi_1^\blacktriangledown\mathcal{E}_P$.

A morphism $f \colon Q \to P$ in $X_{sm}$ induces the functor
\[
\widetilde{f^\blacktriangledown} \colon \Desc(P;\mathcal{P}^\flat) \to \Desc(Q;\mathcal{P}^\flat)
\]
defined by $(\mathcal{F}, g_\mathcal{F}) \mapsto (f^\blacktriangledown\mathcal{F}, (f\times_X f)^\blacktriangledown g_\mathcal{F})$.

\subsection{Descent}
We shall say that $P\xrightarrow{\pi_P} X \in X_{sm}$ is \emph{classical} if $\pi_P$ is a disjoint union of open embeddings.

For any smooth cover $P\xrightarrow{\pi_P} X$ there exists a classical cover $T\xrightarrow{\pi_T} X$ such that $\Hom_{X_{sm}}(T,P) \neq \varnothing$, i.e. there exists a morphism $f \colon T \to P$ such that $\pi_T = f\circ\pi_P$. In other words, every smooth cover admits a classical refinement.

We shall say that $\mathcal{P}^\flat$ has the smooth (respectively, classical) descent property if for any smooth (respectively, classical) cover $P \xrightarrow{\pi_P} X$ the functor \eqref{localization to descent data} is an equivalence.

\begin{theorem}\label{thm: smooth descent}
If $\mathcal{P}^\flat$ has the classical descent property then it has the smooth descent property.
\end{theorem}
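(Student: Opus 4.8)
The plan is to fix a smooth cover $\pi_P\colon P\to X$ and prove that the localization functor $\widetilde{\pi_P^\blacktriangledown}\colon\mathcal{P}_X\to\Desc(P;\mathcal{P}^\flat)$ is an equivalence, the classical descent property being the only input. Using the refinement statement recorded just above, I would choose a classical cover $\pi_T\colon T\to X$ together with a morphism $f\colon T\to P$ in $X_{sm}$, so that $\pi_P\circ f=\pi_T$. The observation that drives everything is that the base change $S:=P\times_X T\xrightarrow{p}P$ of $\pi_T$ along $\pi_P$ is again a \emph{classical} cover of $P$: writing $T=\coprod_i U_i$ with $U_i\hookrightarrow X$ open embeddings, one has $S=\coprod_i\pi_P^{-1}(U_i)$, a disjoint union of open embeddings into $P$. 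Thus the classical descent property is available both for $\pi_T\colon T\to X$ and for $p\colon S\to P$, while the projections $p\colon S\to P$, $q\colon S\to T$ and the structure maps $\pi_P,\pi_T$ are all submersions, so that the coherence isomorphisms $c^\blacktriangledown$ between their various composites are invertible.

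First I would produce a candidate quasi-inverse. Let $D_T$ be a quasi-inverse of the equivalence $\widetilde{\pi_T^\blacktriangledown}\colon\mathcal{P}_X\to\Desc(T;\mathcal{P}^\flat)$ supplied by classical descent, and set $G:=D_T\circ\widetilde{f^\blacktriangledown}$, where $\widetilde{f^\blacktriangledown}\colon\Desc(P;\mathcal{P}^\flat)\to\Desc(T;\mathcal{P}^\flat)$ is the refinement functor. Because $\pi_P\circ f=\pi_T$ and the coherences along the submersion $\pi_P$ are isomorphisms, there is a natural isomorphism $\widetilde{f^\blacktriangledown}\circ\widetilde{\pi_P^\blacktriangledown}\cong\widetilde{\pi_T^\blacktriangledown}$, whence $G\circ\widetilde{\pi_P^\blacktriangledown}\cong D_T\circ\widetilde{\pi_T^\blacktriangledown}\cong\id_{\mathcal{P}_X}$ immediately. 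The entire content of the theorem is therefore the construction of a natural isomorphism $\widetilde{\pi_P^\blacktriangledown}\circ G\cong\id$ on $\Desc(P;\mathcal{P}^\flat)$.

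So I would fix $(\mathcal{F},g_\mathcal{F})\in\Desc(P;\mathcal{P}^\flat)$ and put $\mathcal{E}:=G(\mathcal{F},g_\mathcal{F})$; by construction classical descent along $T$ furnishes an isomorphism $\phi\colon\mathcal{E}_T\xrightarrow{\ \sim\ }f^\blacktriangledown\mathcal{F}$ compatible with the gluing data on $T\times_X T$. I must manufacture an isomorphism $\mathcal{E}_P\xrightarrow{\ \sim\ }\mathcal{F}$ in $\mathcal{P}^\flat_P$ compatible with $g_\mathcal{F}$, and the strategy is to build it over the classical cover $p\colon S\to P$ and then descend it. The maps $p$ and $f\circ q$ are two morphisms $S\to P$ over $X$, so they assemble into $\rho:=(p,f\circ q)\colon S\to P\times_X P$, with $\pi_1\circ\rho=p$ and $\pi_2\circ\rho=f\circ q$. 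Pulling $g_\mathcal{F}$ back along $\rho$ and invoking the submersion coherences for $\pi_1,\pi_2$ compares $p^\blacktriangledown\mathcal{F}$ with $(f\circ q)^\blacktriangledown\mathcal{F}$; combining this with $q^\blacktriangledown(\phi)$ and the canonical submersion identifications $\mathcal{E}_S\cong q^\blacktriangledown\mathcal{E}_T$ produces an isomorphism $\psi\colon\mathcal{E}_S\xrightarrow{\ \sim\ }p^\blacktriangledown\mathcal{F}$ over $S$. Descending $\psi$ along the classical cover $p$ then yields the desired $\overline{\psi}\colon\mathcal{E}_P\xrightarrow{\ \sim\ }\mathcal{F}$, and a parallel computation shows $\overline{\psi}$ intertwines $g_{\mathcal{E}_P}$ with $g_\mathcal{F}$, giving the required isomorphism in $\Desc(P;\mathcal{P}^\flat)$.

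The main obstacle is exactly these compatibility verifications. To descend $\psi$ along $p$ one must check that it satisfies the gluing condition for the cover $S\to P$, i.e. that it agrees with the canonical descent isomorphism on $S\times_P S$; the requisite identity is gotten by pulling the cocycle condition for $g_\mathcal{F}$ back along the natural map $S\times_P S\to P\times_X P\times_X P$ and coupling it with the $T\times_X T$-compatibility of $\phi$. The delicate point running through all of this is that the inverse image is only pseudo-functorial along submersions (this is the content of Lemma~\ref{lemma: condition C} and Lemma~\ref{lemma: one submersion iso}, and is built into the framework of \ref{subsection: general framework}); consequently every identification above must be threaded through coherence isomorphisms whose outer map is a submersion, and the cross-section comparison that is \emph{not} of this form is supplied instead by the gluing datum $g_\mathcal{F}$ through $\rho$. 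This is precisely why the classical cover $p\colon S\to P$ and the submersion projections of the fibre products, rather than the non-submersive $f$ by itself, must carry the argument. Naturality of the resulting isomorphism $\widetilde{\pi_P^\blacktriangledown}\circ G\cong\id$ follows from the same bookkeeping, and independence of the construction from the auxiliary choice of $(T,f)$ need not be addressed, as it affects $G$ only up to natural isomorphism.
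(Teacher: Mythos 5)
Your proposal is correct and follows essentially the same route as the paper: refine the smooth cover by a classical one $f\colon T\to P$, observe that the base change $P\times_X T\to P$ is a classical cover of $P$, transport the descent datum across your map $\rho=(p,f\circ q)$ (which is precisely the paper's $\id\times f\colon P\times_X T\to P\times_X P$) using $g_\mathcal{F}$, and conclude by classical descent along $\pr_P$. The only difference is organizational — you package the argument as an explicit quasi-inverse $G=D_T\circ\widetilde{f^\blacktriangledown}$ with two natural isomorphisms, while the paper verifies essential surjectivity, faithfulness and fullness separately — and your explicit check of the gluing condition for $\psi$ over $S\times_P S$ is the same cocycle manipulation the paper leaves implicit in its use of the equivalence $\widetilde{\pr_P^\blacktriangledown}$.
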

\begin{proof}
Suppose that $f \colon T \to P$ is a morphism in $X_{sm}$ with $T\xrightarrow{\pi_T} X$ a classical cover. For $(\mathcal{F}, g_\mathcal{F}) \in \Desc(P;\mathcal{P}^\flat)$ there exist $\mathcal{E} \in \mathcal{P}^\flat_X$ and an isomorphism $\widetilde{\pi_T^\blacktriangledown}\mathcal{E} \cong \widetilde{f^\blacktriangledown}(\mathcal{F}, g_\mathcal{F})$.

Consider the diagram
\begin{equation}\label{diag: pull-back cover}
\begin{CD}
P\times_X T @>{\pr_T}>> T \\
@V{\pr_P}VV @VV{\pi_T}V \\
P @>{\pi_P}>> X
\end{CD}
\end{equation}
Note that $\pr_P \colon P\times_X T \to P$ a classical cover of $P$. Since $\pi_P\circ\pr_P = \pi_T\circ\pr_T$, $\pr_2\circ(\id\times f) = f\circ\pr_T$ and $\pr_1\circ(\id\times f) = \pr_P$ there are isomorphisms
\begin{multline}
\widetilde{\pr_P^\blacktriangledown}\widetilde{\pi_P^\blacktriangledown}\mathcal{E} \cong \widetilde{\pr_T^\blacktriangledown}\widetilde{\pi_T^\blacktriangledown}\mathcal{E} \cong \widetilde{\pr_T^\blacktriangledown}\widetilde{f^\blacktriangledown}(\mathcal{F}, g_\mathcal{F})\cong \\ \cong\widetilde{(\id\times f)^\blacktriangledown}\widetilde{\pr_2^\blacktriangledown}(\mathcal{F}, g_\mathcal{F}) \xrightarrow{\widetilde{(\id\times f)^\blacktriangledown}(g_\mathcal{F})} \widetilde{(\id\times f)^\blacktriangledown}\widetilde{\pr_1^\blacktriangledown}(\mathcal{F}, g_\mathcal{F})
\cong \widetilde{\pr_P^\blacktriangledown}(\mathcal{F}, g_\mathcal{F})
\end{multline}
Since $\pr_P \colon P\times_X T \to P$ is a classical cover of $P$ the functor $\widetilde{\pr_P^\blacktriangledown}$ is an equivalence by assumption. Therefore, an isomorphism $\widetilde{\pr_P^\blacktriangledown}(\mathcal{F}, g_\mathcal{F}) \cong \widetilde{\pr_P^\blacktriangledown}\widetilde{\pi_P^\blacktriangledown}\mathcal{E}$ is equivalent to an isomorphism $(\mathcal{F}, g_\mathcal{F}) \cong \widetilde{\pi_P^\blacktriangledown}\mathcal{E}$. Thus, the functor $\widetilde{\pi_P^\blacktriangledown}$ is essentially surjective.

Since $\widetilde{f^\blacktriangledown}\circ\widetilde{\pi_P^\blacktriangledown} \cong \widetilde{\pi_T^\blacktriangledown}$ is an equivalence, it follows that $\widetilde{\pi_P^\blacktriangledown}$ is faithful.

Suppose that $(\mathcal{F}_1, g_{\mathcal{F}_1}) \xrightarrow{\phi}(\mathcal{F}, g_{\mathcal{F}_2})$ is a morphism in $\Desc(P;\mathcal{P}^\flat)$. Then, there exits a morphism $\mathcal{E}_1 \xrightarrow{\psi}\mathcal{E}_2$ in $\mathcal{P}^\flat_X$ and isomorphisms $\widetilde{\pi_T^\blacktriangledown}\mathcal{E}_i \cong \widetilde{f^\blacktriangledown}(\mathcal{F}_i, g_{\mathcal{F}_i})$ which intertwine $\phi$ and $\psi$. Proceeding as in the proof of essential subjectivity one obtains the commutative diagram
\[
\begin{CD}
\widetilde{\pr_P^\blacktriangledown}\widetilde{\pi_P^\blacktriangledown}\mathcal{E}_1 @>{\cong}>> \widetilde{\pr_P^\blacktriangledown}(\mathcal{F}_1, g_{\mathcal{F}_1}) \\
@V{\widetilde{\pr_P^\blacktriangledown}\widetilde{\pi_P^\blacktriangledown}(\psi)}VV @VV{\widetilde{\pr_P^\blacktriangledown}(\phi)}V \\
\widetilde{\pr_P^\blacktriangledown}\widetilde{\pi_P^\blacktriangledown}\mathcal{E}_2 @>{\cong}>> \widetilde{\pr_P^\blacktriangledown}(\mathcal{F}_1, g_{\mathcal{F}_2})
\end{CD}
\]
Since the functor $\widetilde{\pr_P^\blacktriangledown}$ is an equivalence by assumption it follows that $\phi = \widetilde{\pi_P^\blacktriangledown}(\psi)$. Therefore the functor $\widetilde{\pi_P^\blacktriangledown}$ is full.
\end{proof}
%
%
%
%

\begin{corollary}[of Theorem \ref{thm: smooth descent}]
$\left(\Mod{\mathcal{O}}^\mathtt{lf}/\mathcal{T}\right)^\sharp$ has the smooth descent property.
\end{corollary}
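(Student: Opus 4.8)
The plan is to reduce to the classical case via Theorem~\ref{thm: smooth descent}: it suffices to show that $\left(\Mod{\mathcal{O}}^\mathtt{lf}/\mathcal{T}\right)^\sharp$ has the \emph{classical} descent property. So I may assume that $P \xrightarrow{\pi_P} X$ is a classical cover, i.e. $P = \bigsqcup_i U_i$ is a disjoint union of open embeddings $j_i\colon U_i \hookrightarrow X$ which together cover $X$, and I must show that the localization functor \eqref{localization to descent data} is an equivalence.

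First I would identify the inverse image functor along an open embedding. If $j\colon U \hookrightarrow X$ is an open embedding, then $\mathcal{O}_{U^\sharp} = \mathcal{O}_{X^\sharp}|_U$, so $j^{\sharp*}\mathcal{E}$ is the restriction $\mathcal{E}|_U$, and the derivative $dj^\sharp\colon \mathcal{T}_{U^\sharp} \to j^{\sharp*}\mathcal{T}_{X^\sharp}$ is an isomorphism. Consequently, in the defining pull-back square \eqref{diag: inverse image over T} the horizontal map $\widetilde{dj^\sharp}$ is an isomorphism and $j^{\sharp+}\mathcal{E} \cong j^{\sharp*}\mathcal{E} = \mathcal{E}|_U$, equipped with the restricted anchor. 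Thus, on the subcategory of open embeddings, the prefibration $\left(\Mod{\mathcal{O}}/\mathcal{T}\right)^\sharp$ is nothing but the usual restriction of anchored $\mathcal{O}_{X^\sharp}$-modules to open subsets.

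With this identification the descent datum $(\mathcal{F}, g_\mathcal{F})$ for the cover $P$ is exactly a family $(\mathcal{E}_i, \pi_i)$ of locally free anchored $\mathcal{O}_{U_i^\sharp}$-modules together with gluing isomorphisms on the overlaps satisfying the cocycle condition on triple overlaps. The underlying graded sheaves $\mathcal{E}_i$, together with their differentials (a local operator glues), glue to a single object $\mathcal{E} \in \Mod{\mathcal{O}_{X^\sharp}}$ by the standard stack property of sheaves of modules over the sheaf of rings $\mathcal{O}_{X^\sharp}$. Since the inverse image functor preserves the terminal object, $\phi^{\sharp+}\mathcal{T}_{X^\sharp} = \mathcal{T}_{Y^\sharp}$, the assignment $U \mapsto \mathcal{T}_{U^\sharp}$ is a Cartesian section; hence the anchors $\pi_i$, being morphisms into this Cartesian section compatible with $g_\mathcal{F}$, glue to a global anchor $\pi\colon \mathcal{E} \to \mathcal{T}_{X^\sharp}$. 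Local freeness of finite rank is a local condition, so $(\mathcal{E}, \pi) \in \left(\Mod{\mathcal{O}}^\mathtt{lf}/\mathcal{T}\right)^\sharp_X$, which gives essential surjectivity. Fullness and faithfulness follow in the same manner: a morphism of descent data is a compatible family of morphisms of restricted anchored sheaves, and such families glue uniquely to a global morphism because $\Hom$-sheaves satisfy descent for open covers.

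The only genuine work is the identification $j^{\sharp+} = j^{\sharp*} = \mathrm{restriction}$ for open embeddings and the bookkeeping that the descent datum for a pair $(\mathcal{E}, \pi)$ decomposes into a descent datum for the sheaf $\mathcal{E}$ together with a compatible morphism into the Cartesian section $\mathcal{T}$; once these are in place the statement reduces to the classical fact that $\mathcal{O}_{X^\sharp}$-modules form a stack on the small site of open subsets of $X$, which presents no real difficulty.
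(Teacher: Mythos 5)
Your proof is correct and takes essentially the same route as the paper: the paper gives no separate argument for this corollary, presenting it as an immediate consequence of Theorem \ref{thm: smooth descent} together with the (unstated but standard) classical descent property, which is exactly what you verify. Your identification $j^{\sharp+}\mathcal{E} \cong \mathcal{E}|_U$ for an open embedding $j$ (valid because $dj^{\sharp}$ is an isomorphism, collapsing the pull-back square \eqref{diag: inverse image over T}) and the subsequent reduction to the stack property of anchored locally free $\mathcal{O}_{X^\sharp}$-modules over open covers supply precisely the details the paper leaves implicit.
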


\section{Inverse image for Lie algebroids}\label{section: Inverse image for Lie algebroids}
The anchor map of an $\mathcal{O}_{\mathfrak{X}}$-Lie algebroid
$\mathcal{A}$ renders the latter as an object of
$\Mod{\mathcal{O}_\mathfrak{X}}/\mathcal{T}_\mathfrak{X}$. This
correspondence extends to a functor
$\LA{\mathcal{O}_{\mathfrak{X}}} \to
\Mod{\mathcal{O}_{\mathfrak{X}}}/\mathcal{T}_{\mathfrak{X}}$.

\subsection{Inverse image for Lie algebroids}\label{subsection: inverse image Lie algd}
For an $\mathcal{O}_{\mathfrak{X}}$-Lie algebroid $\mathcal{A}$
and a map of DG-manifolds $\phi \colon \mathfrak{Y} \to \mathfrak{X}$ the inverse
image $\phi^+\mathcal{A}$ has a canonical structure of a Lie
algebroid on $\mathfrak{Y}$. Namely, the bracket on
$\phi^+\mathcal{A}$ is given by
\[
[(f\otimes a,\xi), (g\otimes b, \eta)] = ((-1)^{ag}fg\otimes [a,b] +
\xi(g)\otimes b - (-1)^{\eta\xi}\eta(f)\otimes a, [\xi,\eta]) ,
\]
where $f,g\in\mathcal{O}_{\mathfrak{Y}}$, $a,b\in\mathcal{A}$,
$\xi, \eta\in \mathcal{T}_{\mathfrak{Y}}$ are homogeneous elements respectively. Since $(f\otimes a,\xi),\ (g\otimes b, \eta)$ are homogeneous elements, $a+f=\xi$ and $g+b=\eta$.

For any composition of maps of DG-manifolds
$$\mathfrak{Z}\xrightarrow{\psi} \mathfrak{Y}\xrightarrow{\phi}\mathfrak{X},$$
a general element in $\psi^+\phi^+\mathcal{A}$ is a finite sum of homogeneous
elements of the form $(h\otimes(f\otimes a,\xi),\rho)$, where
$h\in\mathcal{O}_{\mathfrak{Z}}$,
$f\in\mathcal{O}_{\mathfrak{Y}}$, $a\in\mathcal{A}$, $\xi \in
\mathcal{T}_{\mathfrak{Y}}$ and $\rho\in
\mathcal{T}_{\mathfrak{Z}}$. In these terms, the canonical map
$c^+_{\phi,\psi}:\psi^+\phi^+\mathcal{A}\to(\phi\circ\psi)^+\mathcal{A}$
in the category
$\Mod{\mathcal{O}_\mathfrak{Z}}/\mathcal{T}_\mathfrak{Z}$ is equal
to
$$c^+_{\phi,\psi}(h\otimes(f\otimes a,\xi),\rho)=(h\psi^{*}(f)\otimes a
,\rho),$$ where,
\begin{equation}\label{rel-pull}
(d\psi)(\rho)=h\otimes\xi,
\end{equation}
see Subsection  \ref{Inverse and composition}.
\begin{lemma}\label{lemma: cfg lie algd}
The map $c^+_{\phi,\psi}$ is a morphism of
$\mathcal{O}_{\mathfrak{Z}}$-Lie algebroids.
\end{lemma}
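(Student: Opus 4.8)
The plan is to reduce the statement to the single assertion that $c^+_{\phi,\psi}$ preserves the bracket. Indeed, the compatibility relations recorded in Subsection \ref{Inverse and composition} already exhibit $c^+_{\phi,\psi}$ as a morphism in $\Mod{\mathcal{O}_{\mathfrak{Z}}}/\mathcal{T}_{\mathfrak{Z}}$: it is the identity on the $\mathcal{T}_{\mathfrak{Z}}$-component and it intertwines the two anchors, via $(\phi\circ\psi)^*(\pi)\circ c^+_{\phi,\psi}=\psi^+\phi^+(\pi)$. Moreover the component of $c^+_{\phi,\psi}$ landing in $(\phi\circ\psi)^*\mathcal{A}$ is $\psi^*(\widetilde{d\phi})\circ\widetilde{d\psi}$, an $\mathcal{O}_{\mathfrak{Z}}$-linear map, which makes the formula $c^+_{\phi,\psi}(h\otimes(f\otimes a,\xi),\rho)=(h\psi^*(f)\otimes a,\rho)$ manifestly additive and $\mathcal{O}_{\mathfrak{Z}}$-linear. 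Since a morphism of $\mathcal{O}_{\mathfrak{Z}}$-Lie algebroids is exactly a bracket-preserving morphism in $\Mod{\mathcal{O}_{\mathfrak{Z}}}/\mathcal{T}_{\mathfrak{Z}}$, only bracket preservation remains to be checked.

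The computational engine is a chain rule for $d\psi$: if $\rho\in\mathcal{T}_{\mathfrak{Z}}$ satisfies $(d\psi)(\rho)=h\otimes\xi$ as in \eqref{rel-pull}, then for every $f\in\mathcal{O}_{\mathfrak{Y}}$ one has $\rho(\psi^*f)=h\cdot\psi^*(\xi(f))$. I would derive this from compatibility of the de Rham differential with $\psi^*$ together with the definition of $d\psi$ as dual to the pullback of one-forms, noting that the pairing $\langle h\otimes\xi,\,1\otimes df\rangle$ carries no Koszul sign because its second tensor factor has degree zero. This is precisely the identity that converts the internal derivative terms $\xi_i(f_j)$ appearing in the bracket of $\phi^+\mathcal{A}$ into the $\rho_i$-derivative terms appearing in the bracket of $(\phi\circ\psi)^+\mathcal{A}$.

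With these in hand I would take homogeneous elements $u=(h_1\otimes(f_1\otimes a_1,\xi_1),\rho_1)$ and $v=(h_2\otimes(f_2\otimes a_2,\xi_2),\rho_2)$, apply the bracket of $\psi^+(\phi^+\mathcal{A})$ (whose internal term involves the bracket of $\phi^+\mathcal{A}$), and then apply $c^+_{\phi,\psi}$, i.e.\ $\psi^*(\widetilde{d\phi})$ on the first component and the identity on the second. On the other side I would compute $[c^+_{\phi,\psi}u,\,c^+_{\phi,\psi}v]$ directly in $(\phi\circ\psi)^+\mathcal{A}$ with $g_i:=h_i\psi^*(f_i)$, expanding each $\rho_i(g_j)$ by the graded Leibniz rule and rewriting $\rho_i(\psi^*f_j)$ via the chain rule. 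The matching is then term by term: the $[a_1,a_2]$ term, the two surviving derivative terms, and the second ($\mathcal{T}_{\mathfrak{Z}}$) component, which agrees on the nose since $[\rho_1,\rho_2]$ is untouched by $c^+_{\phi,\psi}$.

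I expect the sign bookkeeping to be the only genuine content, and hence the main obstacle. The source signs are powers of $-1$ in the degrees of $\alpha_i=(f_i\otimes a_i,\xi_i)$, whereas the target signs are powers in the degrees of $\rho_i$; the bridge is the homogeneity relation $\deg\alpha_i=\deg\xi_i=\deg\rho_i-\deg h_i$ forced by $(d\psi)(\rho_i)=h_i\otimes\xi_i$, combined with graded-commutativity $h_1h_2=(-1)^{h_1h_2}h_2h_1$ in $\mathcal{O}_{\mathfrak{Z}}$. The representative identity $(-1)^{\alpha_1 h_2}h_1h_2=(-1)^{\rho_1 h_2}h_2h_1$ already illustrates the pattern, and I would carry out the analogous reconciliation for each of the three terms. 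Throughout, the computation is done on homogeneous simple tensors and extended by additivity and $\mathcal{O}_{\mathfrak{Z}}$-linearity, which is legitimate precisely because $c^+_{\phi,\psi}$ and both brackets are already well defined as maps of sheaves.
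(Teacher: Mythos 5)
Your proposal is correct and takes essentially the same route as the paper's proof: a direct term-by-term verification of bracket preservation on homogeneous simple tensors, with the chain-rule identity $\rho(\psi^*(f)) = h\,\psi^*(\xi(f))$ extracted from \eqref{rel-pull} doing exactly the work the paper assigns to it, and your sign bookkeeping via $\deg\rho = \deg h + \deg\xi$ reproducing the paper's step $(-1)^{lh+l\xi} = (-1)^{l\rho}$. The only cosmetic difference is that you isolate the chain rule as a standalone lemma and state explicitly the reduction to bracket preservation (anchor compatibility and $\mathcal{O}_{\mathfrak{Z}}$-linearity being already recorded in \ref{Inverse and composition}), both of which the paper leaves implicit.
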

\begin{proof}
\begin{multline*}
[c^+_{\phi,\psi}(h\otimes(f\otimes a,\xi),\rho),
c^+_{\phi,\psi}(l\otimes(g\otimes b,\eta),\nu)]
\\
= [(h\psi^{*}(f)\otimes a ,\rho),(l\psi^{*}(g)\otimes
b ,\nu)]
\\
= (\rho(l)\psi^*(g)\otimes b +(-1)^{\rho l} l\rho(\psi^*(g))\otimes
b-(-1)^{\nu\rho}\nu(h)\psi^*(f)\otimes a
\\
 -(-1)^{\nu\rho+\nu h} h\nu(\psi^*(f))\otimes a + (-1)^{lf+a(f+g)}
hl\psi^*(fg)\otimes[a,b] ,[\rho,\nu]).
\end{multline*}
On the other hand,
\begin{multline*}
c^+_{\phi,\psi}[(h\otimes(f\otimes a,\xi),\rho),
(l\otimes(g\otimes b,\eta),\nu)]
\\
 = c^+_{\phi,\psi}(\rho(l)\otimes(g\otimes
b,\eta)-(-1)^{\nu\rho}\nu(h)\otimes(f\otimes a,\xi)+(-1)^{l\xi}hl\otimes[(f\otimes
a,\xi),(g\otimes b, \eta)],[\rho,\nu])
\\
 = c^+_{\phi,\psi}(\rho(l)\otimes(g\otimes
b,\eta)-(-1)^{\nu\rho}\nu(h)\otimes(f\otimes a,\xi) \\
+(-1)^{l\xi}hl\otimes((-1)^{ag}fg\otimes [a,b] +
\xi(g)\otimes b - (-1)^{\eta\xi}\eta(f)\otimes a, [\xi,\eta]),[\rho,\nu])
\\
= (\rho(l)\psi^*(g)\otimes b -(-1)^{\nu\rho}\nu(h)\psi^*(f)\otimes a+(-1)^{l\xi}
hl(\psi^*(\xi(g))\otimes b \\
-(-1)^{l\xi+\eta\xi}\psi^*(\nu(f))\otimes
a+(-1)^{l\xi+ag}\psi^*(fg)\otimes[a,b]),[\rho,\nu]).
\end{multline*}
Comparing term by term the calculations of
$[c^+_{\phi,\psi}(h\otimes(f\otimes a,\xi),\rho),
c^+_{\phi,\psi}(l\otimes(g\otimes b,\eta),\nu)]$ and
$c^+_{\phi,\psi}[(h\otimes(f\otimes a,\xi),\rho),
(l\otimes(g\otimes b,\eta),\nu)]$, it remains to show that
\[
hl(\psi^*(\xi (g)))\otimes b = l\rho(\psi^*(g))\otimes b,
\]
\[
hl(\psi^*(\eta (f)))\otimes a = h\nu(\psi^*(f))\otimes a.
\]
Applying the formula \eqref{rel-pull} in the first case one gets,
\[
(-1)^{l\xi}hl(\psi^*(\xi (g)))\otimes b=(-1)^{lh+l\xi}l(h\psi^*(\xi (g)))\otimes
b=(-1)^{l\rho}l(h\otimes\xi(g)\otimes b)=l\rho(\psi^*(g))\otimes b.
\]
The second case is analogous.
\end{proof}

\subsection{Inverse image for marked Lie algebroids}\label{subsection: Inverse image for marked Lie algebroids}
Suppose that $\mathfrak{Y}\xrightarrow{\phi}\mathfrak{X}$ is a map of DG-manifolds. Let $(\mathcal{A},\mathfrak{c})\in\LA{\mathcal{O}_{\mathfrak{X}}}^\star_n$ be a marked Lie algebroid on $\mathfrak{X}$, i.e. there is a map of marked Lie algebroids
\[
\mathcal{O}_{\mathfrak{X}}[n]\xrightarrow{\cdot \mathfrak{c}}\mathcal{A}. 
\]
Since $\phi^+\mathcal{O}_{\mathfrak{X}}[n]=\ker(d\phi)\oplus\mathcal{O}_{\mathfrak{Y}}[n]$, there is a canonical map
\begin{equation}\label{marking map}
\mathcal{O}_{\mathfrak{Y}}[n]\to\ker(d\phi)\oplus\mathcal{O}_{\mathfrak{Y}}[n] = \phi^+\mathcal{O}_{\mathfrak{X}}[n]
\end{equation}
Let $\phi^{+}(\mathfrak{c}) \in \Gamma(Y;\phi^+\mathcal{A})^{-n}$ denote the image of $1 \in \Gamma(Y;\mathcal{O}_{\mathfrak{Y}})$ under the composition
\[
\mathcal{O}_{\mathfrak{Y}}[n] \xrightarrow{\eqref{marking map}} \phi^+\mathcal{O}_{\mathfrak{X}}[n] \xrightarrow{\phi^+(\cdot \mathfrak{c})} \phi^+\mathcal{A}\ .
\]
The pair $(\phi^+\mathcal{A}, \phi^{+}(\mathfrak{c}))$ is a marked Lie algebroid denoted $\phi^+(\mathcal{A},\mathfrak{c})$. The map
\[
\mathcal{O}_{\mathfrak{Y}}[n]\xrightarrow{\cdot \phi^{+}(\mathfrak{c})}\phi^+\mathcal{A}\ .
\]
is given by $g\mapsto (g\otimes \mathfrak{c},0)$ for $g\in\mathcal{O}_{\mathfrak{Y}}[n]$.

The assignment $(\mathcal{A},\mathfrak{c}) \mapsto \phi^+(\mathcal{A},\mathfrak{c})$ extends to a functor
\[
\phi^+ \colon \LA{\mathcal{O}_{\mathfrak{X}}}^\star_n \to \LA{\mathcal{O}_{\mathfrak{Y}}}^\star_n\ .
\]

For any composition of maps of DG-manifolds, $\mathfrak{Z}\xrightarrow{\psi} \mathfrak{Y}\xrightarrow{\phi}\mathfrak{X}$, the morphism $c^{+}_{\phi,\psi}$ makes the diagram
\[
\xymatrix{ \mathcal{O}_{\mathfrak{Z}}[n] \ar[rrd]_{\cdot \psi^+\phi^+(\mathfrak{c})}\ar[rr]^{\cdot (\phi\circ\psi)^+(\mathfrak{c})} & & (\phi\circ\psi)^+\mathcal{A} \\ & & \psi^+\phi^+\mathcal{A}\ar[u]_{c^+_{\phi,\psi}} \\
}
\]
commutative.

\begin{lemma}\label{lemma: preservation of marking}
The map $c^+_{\phi,\psi}$ is a morphism of marked Lie algebroids.
\end{lemma}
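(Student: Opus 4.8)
The plan is to reduce the statement to two ingredients: the fact, already established in Lemma \ref{lemma: cfg lie algd}, that $c^+_{\phi,\psi}$ is a morphism of $\mathcal{O}_{\mathfrak{Z}}$-Lie algebroids, and the verification that it carries the marking $\psi^+\phi^+(\mathfrak{c})$ to the marking $(\phi\circ\psi)^+(\mathfrak{c})$. Since a morphism of marked Lie algebroids is by definition a Lie algebroid morphism preserving the marking, once the latter identity is checked we are done. In other words, the content of the lemma is precisely the commutativity of the triangle displayed just above it.

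First I would unwind the two markings using the explicit description given in \ref{subsection: Inverse image for marked Lie algebroids}, where the marking map $\mathcal{O}_{\mathfrak{Y}}[n]\xrightarrow{\cdot\phi^+(\mathfrak{c})}\phi^+\mathcal{A}$ sends $g$ to $(g\otimes\mathfrak{c},0)$. Applying this formula twice --- first for $\phi$ and then for $\psi$ --- identifies the marking on $\psi^+\phi^+\mathcal{A}$ as the image of $1\in\Gamma(Z;\mathcal{O}_{\mathfrak{Z}})$, namely the homogeneous element $(1\otimes(1\otimes\mathfrak{c},0),0)$ in the iterated pull-back, written in the notation $(h\otimes(f\otimes a,\xi),\rho)$ of \ref{subsection: inverse image Lie algd} with $h=f=1$, $a=\mathfrak{c}$ and $\xi=\rho=0$. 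Here it is worth recording that the anchor of $\mathfrak{c}$ vanishes, since any central section lies in the kernel of the anchor, so that $\xi=\rho=0$ is consistent with the defining fibre-product constraints and with relation \eqref{rel-pull}, which in this case reads $0=1\otimes 0$.

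Next I would apply the explicit formula $c^+_{\phi,\psi}(h\otimes(f\otimes a,\xi),\rho)=(h\psi^{*}(f)\otimes a,\rho)$ to this element. Substituting the values above gives $c^+_{\phi,\psi}(1\otimes(1\otimes\mathfrak{c},0),0)=(1\otimes\mathfrak{c},0)$, since $\psi^{*}(1)=1$. Finally, the same marking formula applied to the composite $\phi\circ\psi$ identifies $(\phi\circ\psi)^+(\mathfrak{c})$ with $(1\otimes\mathfrak{c},0)$, and the two elements coincide; this establishes commutativity of the triangle and hence that $c^+_{\phi,\psi}$ preserves the marking.

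The computation is entirely routine; the only point that requires care is bookkeeping in the iterated pull-back, namely correctly locating $\psi^+\phi^+(\mathfrak{c})$ inside $\psi^+\phi^+\mathcal{A}=\mathcal{T}_{\mathfrak{Z}}\times_{\psi^{*}\mathcal{T}_{\mathfrak{Y}}}\psi^{*}(\phi^+\mathcal{A})$ as the nested element above, and checking that its tangent component is $0$ so that the formula for $c^+_{\phi,\psi}$ applies with $\rho=0$. This is precisely where the vanishing of the anchor on central sections is used. Combining this with Lemma \ref{lemma: cfg lie algd} yields that $c^+_{\phi,\psi}$ is a morphism of marked Lie algebroids.
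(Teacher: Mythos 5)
Your proposal is correct and follows essentially the same route as the paper's own proof: cite Lemma \ref{lemma: cfg lie algd} for the Lie algebroid morphism part, then verify preservation of the marking by applying the explicit formula $c^+_{\phi,\psi}(h\otimes(f\otimes a,\xi),\rho)=(h\psi^{*}(f)\otimes a,\rho)$ to the element $(g\otimes(1\otimes\mathfrak{c},0),0)$ (the paper checks this for general $g\in\mathcal{O}_{\mathfrak{Z}}$, you at $g=1$, which suffices by $\mathcal{O}_{\mathfrak{Z}}$-linearity). Your additional bookkeeping remark---that the central section $\mathfrak{c}$ lies in the kernel of the anchor, so the tangent components $\xi=\rho=0$ are consistent with the fibre-product constraints and with relation \eqref{rel-pull}---is implicit in the paper's formula $g\mapsto(g\otimes\mathfrak{c},0)$ and is a worthwhile point to make explicit.
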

\begin{proof}
Lemma \ref{lemma: cfg lie algd} implies that $c^+_{\phi,\psi}$ is a morphism of $\mathcal{O}_{\mathfrak{Y}}$-Lie algebroids.  For $g\in\mathcal{O}_{\mathfrak{Z}}$
\[
g \cdot (\phi\circ\psi)^+(\mathfrak{c}) = (g\otimes\mathfrak{c},0),
\]
by construction. On the other hand, the calculation
\[
c^+_{\phi,\psi}(g\cdot \psi^+\phi^+(\mathfrak{c})) = c^+_{\phi,\psi}(g\otimes(1\otimes \mathfrak{c},0),0)=(g\otimes\mathfrak{c},0)
\]
shows that $g\cdot(\phi\circ\psi)^+(\mathfrak{c}) = c^{+}_{\phi,\psi}(g\cdot\psi^+\psi^+(\mathfrak{c}))$, i.e.  $c^+_{\phi,\psi}$ preserves the marking.
\end{proof}

\subsection{Inverse image for $\mathcal{O}[n]$-extensions}
Suppose that $\mathfrak{Y}\xrightarrow{\phi}\mathfrak{X}$ is a map of DG-manifolds and $\mathcal{B}$ is a $\mathcal{O}_{\mathfrak{X}}$-Lie algebroid.

\begin{lemma}\label{lemma: inverse image preserves extensions}
Suppose that $(\mathcal{A},\mathfrak{c})$ is a $\mathcal{O}[n]$-extension of $\mathcal{B}$. Then, $\phi^+(\mathcal{A},\mathfrak{c})$ is a $\mathcal{O}[n]$-extension of $\phi^+\mathcal{B}$.
\end{lemma}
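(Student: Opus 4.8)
The plan is to exhibit the short exact sequence witnessing the extension and to verify its exactness directly from the fiber–product description of the inverse image, the one structural input being the compatibility of anchors.

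First I would produce the candidate projection. Write $p \colon \mathcal{A} \to \mathcal{B}$ for the structure map of the given extension; it is a morphism of $\mathcal{O}_\mathfrak{X}$-Lie algebroids, hence a morphism in $\Mod{\mathcal{O}_\mathfrak{X}}/\mathcal{T}_\mathfrak{X}$, so applying the functor $\phi^+$ yields $\phi^+(p) \colon \phi^+\mathcal{A} \to \phi^+\mathcal{B}$, which is again a morphism of Lie algebroids (as is checked from the bracket formula of \ref{subsection: inverse image Lie algd}, exactly as in Lemma \ref{lemma: cfg lie algd}). Together with the marking $\mathcal{O}_\mathfrak{Y}[n] \xrightarrow{\cdot\phi^+(\mathfrak{c})} \phi^+\mathcal{A}$, $g \mapsto (g\otimes\mathfrak{c},0)$, of \ref{subsection: Inverse image for marked Lie algebroids}, it then remains to prove that
\[
0 \to \mathcal{O}_\mathfrak{Y}[n] \xrightarrow{\cdot\phi^+(\mathfrak{c})} \phi^+\mathcal{A} \xrightarrow{\phi^+(p)} \phi^+\mathcal{B} \to 0
\]
is exact.

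The key reduction is to $\phi^*$. Since $\mathcal{B}$ (hence $\mathcal{A}$) is locally free of finite rank, the defining sequence $0 \to \mathcal{O}_\mathfrak{X}[n] \xrightarrow{\cdot\mathfrak{c}} \mathcal{A} \xrightarrow{p} \mathcal{B} \to 0$ is locally split, so the right-exact functor $\phi^*$ preserves its exactness, giving a short exact sequence $0 \to \mathcal{O}_\mathfrak{Y}[n] \xrightarrow{\phi^*(\cdot\mathfrak{c})} \phi^*\mathcal{A} \xrightarrow{\phi^*(p)} \phi^*\mathcal{B} \to 0$. I would then work with the explicit descriptions $\phi^+\mathcal{A} = \{(a,\xi)\in\phi^*\mathcal{A}\times\mathcal{T}_\mathfrak{Y} : \phi^*(\pi_\mathcal{A})(a)=d\phi(\xi)\}$ and likewise $\phi^+\mathcal{B} = \{(b,\eta)\in\phi^*\mathcal{B}\times\mathcal{T}_\mathfrak{Y} : \phi^*(\pi_\mathcal{B})(b)=d\phi(\eta)\}$, under which $\phi^+(p)$ sends $(a,\xi)\mapsto(\phi^*(p)(a),\xi)$. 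The verification is then term by term: injectivity at $\mathcal{O}_\mathfrak{Y}[n]$ follows from injectivity of $\phi^*(\cdot\mathfrak{c})$; the composite $\phi^+(p)\circ(\cdot\phi^+(\mathfrak{c}))$ vanishes because $p(\mathfrak{c})=0$ by exactness of the original sequence; and for exactness at $\phi^+\mathcal{A}$ an element $(a,\xi)$ killed by $\phi^+(p)$ has $\xi=0$ and $\phi^*(p)(a)=0$, whence $a=g\otimes\mathfrak{c}$ by middle-exactness of the $\phi^*$-sequence, so $(a,\xi)$ lies in the image of the marking. The one genuinely structural point is surjectivity of $\phi^+(p)$: given $(b,\eta)\in\phi^+\mathcal{B}$, choose a lift $a_0\in\phi^*\mathcal{A}$ of $b$ along the epimorphism $\phi^*(p)$; then the anchor compatibility $\pi_\mathcal{A}=\pi_\mathcal{B}\circ p$ gives $\phi^*(\pi_\mathcal{A})(a_0)=\phi^*(\pi_\mathcal{B})(b)=d\phi(\eta)$, so $(a_0,\eta)$ automatically satisfies the fiber–product condition and maps to $(b,\eta)$.

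The main obstacle is really the exactness input for $\phi^*$: one must know that the extension sequence stays short exact after pullback, which is exactly where local freeness (local splitting) of $\mathcal{B}$ is used. Once that is in hand everything else is formal, and the pleasant feature is that the condition cutting out the fiber product is precisely what forces the chosen lift $a_0$ to land in $\phi^+\mathcal{A}$ with no correction term in $\ker(d\phi)$, so surjectivity comes for free rather than requiring an adjustment. I would close by noting that $\overline{\phi^+\mathcal{A}} = \coker(\cdot\phi^+(\mathfrak{c})) \cong \phi^+\mathcal{B}$ is then immediate from this exact sequence, so $\phi^+(\mathcal{A},\mathfrak{c})$ is indeed an $\mathcal{O}[n]$-extension of $\phi^+\mathcal{B}$.
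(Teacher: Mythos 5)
Your proof is correct and is essentially the paper's argument written out on sections: the paper pastes the two Cartesian squares to obtain the canonical isomorphism $\phi^+\mathcal{A} \cong \phi^+\mathcal{B}\times_{\phi^*\mathcal{B}}\phi^*\mathcal{A}$ and concludes by base change of the pulled-back extension along $\phi^+\mathcal{B} \to \phi^*\mathcal{B}$, and your surjectivity step (the lift $a_0$ landing in the fiber product automatically, via $\pi_\mathcal{A} = \pi_\mathcal{B}\circ p$) is exactly that pasting in element form. A minor merit of your write-up is that you make explicit the local-freeness (local splitting) input guaranteeing that $\phi^*$ preserves exactness of the extension sequence --- in particular injectivity of $\phi^*(\cdot\mathfrak{c})$ --- which the paper's concluding ``Hence'' leaves implicit, consistent with the locally free setting in force throughout.
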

\begin{proof}
Since both small squares in the commutative diagram
\[
\begin{CD}
\phi^+\mathcal{B}\times_{\phi^*\mathcal{B}}\phi^*\mathcal{A} @>>> \phi^+\mathcal{B} @>>> \mathcal{T}_{\mathfrak{Y}} \\
@VVV @VVV @VVV \\
\phi^*\mathcal{A} @>>> \phi^*\mathcal{B} @>>> \phi^*\mathcal{T}_{\mathfrak{X}}
\end{CD}
\]
are Cartesian, so is the large one. Therefore there is a canonical isomorphism $\phi^+\mathcal{A} \cong \phi^+\mathcal{B}\times_{\phi^*\mathcal{B}}\phi^*\mathcal{A}$. Hence, $\phi^+(\mathcal{A},\mathfrak{c})$ is a $\mathcal{O}[n]$-extension of $\phi^+\mathcal{B}$.
\end{proof}

Thus, the inverse image functor for marked Lie algebroids induces a functor
\begin{equation}\label{inverse image extensions Lie}
\phi^+ \colon \OExt{n}(\mathcal{\mathcal{B}}) \to \OExt{n}(\mathcal{\phi^+\mathcal{B}})
\end{equation}

\begin{proposition}\label{lemma: inverse image is linear}
The functor \eqref{inverse image extensions Lie} is a morphism of $\mathbb{C}$-vector spaces in categories (and, in particular, of Picard groupoids).
\end{proposition}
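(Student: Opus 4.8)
The plan is to exhibit, for extensions $\mathcal{A}_1,\ldots,\mathcal{A}_m\in\OExt{n}(\mathcal{B})$ and scalars $\lambda_1,\ldots,\lambda_m\in\mathbb{C}$, a canonical isomorphism
\[
\phi^+\left(\opdp_{i=1}^m\lambda_i\mathcal{A}_i\right)\;\cong\;\opdp_{i=1}^m\lambda_i\,\phi^+\mathcal{A}_i
\]
in $\OExt{n}(\phi^+\mathcal{B})$, natural in the $\mathcal{A}_i$ and compatible with the associativity, commutativity, unit and distributivity constraints; this is precisely what it means for the functor \eqref{inverse image extensions Lie} to be a morphism of $\mathbb{C}$-vector spaces in categories. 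I would follow the pattern of Proposition \ref{lemma: tau and Q are linear} and its surrounding discussion, replacing the cokernel that enters the definition of $\tau$ by the fiber product that enters the definition of $\phi^+$.

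First I would verify that $\phi^+$ commutes with the fiber products appearing in the definition of the linear combination, that is, that the canonical map
\[
\phi^+\left(\mathcal{A}_1\times_\mathcal{B}\cdots\times_\mathcal{B}\mathcal{A}_m\right)\longrightarrow\phi^+\mathcal{A}_1\times_{\phi^+\mathcal{B}}\cdots\times_{\phi^+\mathcal{B}}\phi^+\mathcal{A}_m
\]
is an isomorphism. Since each structure map $\mathcal{A}_i\to\mathcal{B}$ is an epimorphism, every pair $\mathcal{A}_i\to\mathcal{B}\leftarrow\mathcal{A}_j$ satisfies condition \eqref{condition C}, so iterating Lemma \ref{lemma: condition C} shows that $\phi^*$ commutes with $\times_\mathcal{B}$. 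Writing $\phi^+=(\mathcal{T}_\mathfrak{Y}\times_{\phi^*\mathcal{T}_\mathfrak{X}}-)\circ\phi^*$ and using that fiber products commute with fiber products (exactly as in the proof of Lemma \ref{lemma: pull-back prod}) then upgrades the statement from $\phi^*$ to $\phi^+$.

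Next I would construct the comparison map from the universal property of the pushout that defines the target $\opdp_i\lambda_i\phi^+\mathcal{A}_i$. The two maps into $\phi^+\!\left(\opdp_i\lambda_i\mathcal{A}_i\right)$ required by that universal property are the marking $\mathcal{O}_\mathfrak{Y}[n]\to\phi^+\!\left(\opdp_i\lambda_i\mathcal{A}_i\right)$ supplied by Lemma \ref{lemma: inverse image preserves extensions}, and the composite of the isomorphism of the previous step with $\phi^+$ of the pushout structure map $\mathcal{A}_1\times_\mathcal{B}\cdots\times_\mathcal{B}\mathcal{A}_m\to\opdp_i\lambda_i\mathcal{A}_i$. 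Their agreement over $\mathcal{O}_\mathfrak{Y}[n]^m$ is obtained by applying $\phi^+$ to the defining pushout relation upstairs and precomposing with \eqref{marking map}, using the commutative diagram of \ref{subsection: Inverse image for marked Lie algebroids}, which identifies the marking of $\phi^+\!\left(\opdp_i\lambda_i\mathcal{A}_i\right)$ with $\phi^+$ of the marking followed by \eqref{marking map}. By construction the resulting morphism covers the identity of $\phi^+\mathcal{B}$ and preserves the markings, so it is a morphism in $\OExt{n}(\phi^+\mathcal{B})$; since that category is a groupoid it is automatically an isomorphism. Running the same bookkeeping with the scalars separated out gives compatibility with scalar multiplication, and naturality of all the maps involved yields the coherence constraints.

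The main obstacle is the mismatch of marking objects. Although $\phi^+$ does send $\mathcal{O}[n]$-extensions to $\mathcal{O}[n]$-extensions (Lemma \ref{lemma: inverse image preserves extensions}), one has $\phi^+\mathcal{O}_\mathfrak{X}[n]=\ker(d\phi)\oplus\mathcal{O}_\mathfrak{Y}[n]$ rather than $\mathcal{O}_\mathfrak{Y}[n]$, so $\phi^+$ does \emph{not} literally carry the pushout square defining $\opdp_i\lambda_i\mathcal{A}_i$ to the one defining $\opdp_i\lambda_i\phi^+\mathcal{A}_i$; the extra summand $\ker(d\phi)$ is exactly what the marking map \eqref{marking map} discards. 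The care is therefore in targeting the pushout at the correct marking $\mathcal{O}_\mathfrak{Y}[n]$ and letting the groupoid property promote the comparison morphism to an isomorphism, rather than hoping that $\phi^+$ preserves the pushout on the nose. Conceptually, the identification $\phi^+\mathcal{A}\cong\phi^+\mathcal{B}\times_{\phi^*\mathcal{B}}\phi^*\mathcal{A}$ of Lemma \ref{lemma: inverse image preserves extensions} realises $\phi^+$ on extensions as the exact functor $\phi^*$ (which preserves the module-level Baer sum, the bracket being determined) followed by pullback of extensions along the projection $\phi^+\mathcal{B}\to\phi^*\mathcal{B}$, and this makes the linearity transparent, the diagrammatic argument above being the concrete implementation of that reformulation.
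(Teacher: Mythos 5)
Your proposal is correct and takes essentially the same route as the paper's proof: you show $\phi^+$ commutes with $\times_{\mathcal{B}}$ via the identification $\phi^+\mathcal{A}_i \cong \phi^+\mathcal{B}\times_{\phi^*\mathcal{B}}\phi^*\mathcal{A}_i$ and condition \eqref{condition C} (the structure maps being epimorphisms), then obtain the comparison morphism $\opdp_i\lambda_i\phi^+\mathcal{A}_i \to \phi^+\bigl(\opdp_i\lambda_i\mathcal{A}_i\bigr)$ from the universal property of the defining pushout, with the marking mismatch $\phi^+\mathcal{O}_\mathfrak{X}[n] = \ker(d\phi)\oplus\mathcal{O}_\mathfrak{Y}[n]$ mediated by \eqref{marking map} exactly as in the paper's commutative diagram. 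Your explicit remark that the groupoid structure of $\OExt{n}(\phi^+\mathcal{B})$ automatically promotes this morphism to an isomorphism is precisely the mechanism the paper relies on.
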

\begin{proof} We prove the statement in the case $\lambda_1\mathcal{A}_1 \opdp\lambda_2\mathcal{A}_2$. The general case is analogous and is left to the reader

Recall that for any object $\mathcal{A}\in\OExt{n}(\mathcal{\mathcal{B}})$ there is the canonical isomorphism
$\phi^+\mathcal{A} = \phi^+\mathcal{B}\times_{\phi^*\mathcal{B}}\ \phi^*\mathcal{A}$.
Thus, for a pair of objects $\mathcal{A}_1,\mathcal{A}_2 \in \OExt{n}(\mathcal{\mathcal{B}})$ there is a sequence of isomorphisms
\begin{multline*}
\phi^+\mathcal{A}_1\times_{\phi^+\mathcal{B}}\phi^+\mathcal{A}_2 \cong (\phi^+\mathcal{B}\times_{\phi^*\mathcal{B}}\phi^*\mathcal{A}_1)\times_{\phi^+\mathcal{B}}(\phi^+\mathcal{B}\times_{\phi^*\mathcal{B}}\phi^*\mathcal{A}_2) \\
\cong \mathcal{T}_{\mathfrak{Y}}\times_{\phi^*\mathcal{T}_{\mathfrak{X}}}(\phi^*\mathcal{A}_1 \times_{\phi^*B}\phi^*\mathcal{A}_2) \cong \mathcal{T}_{\mathfrak{Y}}\times_{\phi^*\mathcal{T}_{\mathfrak{X}}}\phi^*(\mathcal{A}_1 \times_{B}\mathcal{A}_2) \cong \phi^+(\mathcal{A}_1\times_{\mathcal{B}}\mathcal{A}_2)\ .
\end{multline*}
The inverse image functor $\phi^+$ applied to the commutative diagram
\[
\begin{CD}
\mathcal{O}_{\mathfrak{X}}[n]\times \mathcal{O}_{\mathfrak{X}}[n] @>>> \mathcal{A}_1\times_{\mathcal{B}}\mathcal{A}_2 \\
@V{(\alpha_1,\alpha_2)\mapsto \lambda_1\alpha_1+\lambda_2\alpha_2}VV @VVV \\
\mathcal{O}_{\mathfrak{X}}[n] @>>> \lambda_1\mathcal{A}_1 \opdp \lambda_2\mathcal{A}_2
\end{CD}
\]
and canonical isomorphisms $\phi^+(\mathcal{O}_{\mathfrak{X}}[n]\times \mathcal{O}_{\mathfrak{X}}[n]) \cong  \phi^+\mathcal{O}_{\mathfrak{X}}[n]\times \phi^+\mathcal{O}_{\mathfrak{X}}[n]$ and $\phi^+\mathcal{A}_1\times_{\phi^+\mathcal{B}}\phi^+\mathcal{A}_2 \cong\phi^+(\mathcal{A}_1\times_{\mathcal{B}}\mathcal{A}_2)$ give rise to the commutative diagram
\[
\begin{CD}
\mathcal{O}_{\mathfrak{Y}}[n]\times \mathcal{O}_{\mathfrak{Y}}[n] @>{(\eqref{marking map},\eqref{marking map})}>> \phi^+\mathcal{O}_{\mathfrak{X}}[n]\times \phi^+\mathcal{O}_{\mathfrak{X}}[n] @>>> \phi^+\mathcal{A}_1\times_{\phi^+\mathcal{B}}\phi^+\mathcal{A}_2 \\
@V{(\alpha_1,\alpha_2)\mapsto \lambda_1\alpha_1+\lambda_2\alpha_2}VV @VV{\phi^+((\alpha_1,\alpha_2)\mapsto \lambda_1\alpha_1+\lambda_2\alpha_2)}V @VVV \\
\mathcal{O}_{\mathfrak{Y}}[n] @>{\eqref{marking map}}>> \phi^+\mathcal{O}_{\mathfrak{X}}[n] @>>> \phi^+(\lambda_1\mathcal{A}_1 \opdp \lambda_2\mathcal{A}_2)\ .
\end{CD}
\]
Therefore, there is a unique morphism in $\OExt{n}(\mathcal{\phi^+\mathcal{B}})$
\begin{equation}\label{map of extensions}
\lambda_1\phi^+\mathcal{A}_1\opdp \lambda_2\phi^+\mathcal{A}_2 \to \phi^+(\lambda_1\mathcal{A}_1 \opdp \lambda_2\mathcal{A}_2)\ .
\end{equation}
\end{proof}

\subsection{Smooth localization for Lie algebroids}\label{subsection: Smooth localization for Lie algebroids}
Let $\left(\LA{\mathcal{O}}\right)^\sharp$ denote the category with objects pairs $(X,\mathcal{A})$, where $X$ is a manifold and $\mathcal{A}\in\LA{\mathcal{O}_{X^\sharp}}$. A morphism $u \colon (Y,\mathcal{B}) \to (X,\mathcal{A})$ is a pair $u=(f, t)$, where $f \colon Y \to X$ is a map of manifolds and $t \colon \mathcal{B} \to f^{\sharp +}(\mathcal{A}$ is a morphism in $\LA{\mathcal{O}_{Y^\sharp}}$. It follows from Lemma \ref{lemma: cfg lie algd} that the forgetful functor $(X,\mathcal{A}) \mapsto X$ makes $\left(\LA{\mathcal{O}}\right)^\sharp$ a category prefibered over $\Man$.

Let $\left(\LA{\mathcal{O}}^\mathtt{lf}\right)^\sharp_X$ denote the full subcategory of $\left(\LA{\mathcal{O}}\right)^\sharp_X$ with objects locally free of finite rank over $\mathcal{O}_{X^\sharp}$.

This is an example of the framework of \ref{subsection: general framework} with $\mathcal{P} = \left(\LA{\mathcal{O}}\right)^\sharp$, $\mathcal{P}^\flat_X = \left(\LA{\mathcal{O}}^\mathtt{lf}\right)^\sharp_X$ and the functor of inverse image defined in \ref{subsection: inverse image Lie algd}.

\begin{corollary}[of Theorem \ref{thm: smooth descent}]
$\left(\LA{\mathcal{O}}^\mathtt{lf}\right)^\sharp$ has the smooth descent property.
\end{corollary}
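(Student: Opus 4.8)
The plan is to derive the statement directly from Theorem \ref{thm: smooth descent}. As recorded in \ref{subsection: Smooth localization for Lie algebroids}, the pair $\mathcal{P} = \left(\LA{\mathcal{O}}\right)^\sharp$ and $\mathcal{P}^\flat_X = \left(\LA{\mathcal{O}}^\mathtt{lf}\right)^\sharp_X$ fits the framework of \ref{subsection: general framework}; hence it suffices to establish the \emph{classical} descent property, after which Theorem \ref{thm: smooth descent} promotes it to the smooth descent property. So the whole task reduces to proving that for a classical cover the localization functor \eqref{localization to descent data} is an equivalence.

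First I would unwind what classical descent means here. A classical cover $P \xrightarrow{\pi_P} X$ is a disjoint union of open embeddings, so $P \cong \coprod_i U_i$ for an open cover $\{U_i\}$ of $X$, and $P\times_X P$, $P\times_X P\times_X P$ are the disjoint unions of the double and triple intersections. For an open embedding $j\colon U \hookrightarrow X$ the induced $j^\sharp$ is again an open embedding with $dj^\sharp$ an isomorphism, so the abstract inverse image $j^{\sharp +}$ of \ref{subsection: inverse image Lie algd} collapses to ordinary restriction $\mathcal{A}\mapsto \mathcal{A}|_{U^\sharp}$ of sheaves of Lie algebroids. Under this identification $\Desc(P;\mathcal{P}^\flat)$ becomes the category of families $(\mathcal{A}_i)$ of locally free $\mathcal{O}_{U_i^\sharp}$-Lie algebroids together with Lie-algebroid isomorphisms $g_{ij}\colon \mathcal{A}_j|_{(U_i\cap U_j)^\sharp} \xrightarrow{\sim} \mathcal{A}_i|_{(U_i\cap U_j)^\sharp}$ satisfying the cocycle condition on triple overlaps.

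Next I would prove that \eqref{localization to descent data} is an equivalence for such a cover, in two steps. For essential surjectivity I would glue in two layers: the underlying objects glue by the already established corollary for $\left(\Mod{\mathcal{O}}^\mathtt{lf}/\mathcal{T}\right)^\sharp$ (smooth descent there specializes to classical covers), producing a locally free $\mathcal{E}\in\Mod{\mathcal{O}_{X^\sharp}}/\mathcal{T}_{X^\sharp}$ with a global anchor; and because each $g_{ij}$ is a morphism of Lie algebroids, the local brackets $\mathcal{A}_i\otimes_\mathbb{C}\mathcal{A}_i \to \mathcal{A}_i$ agree on overlaps and hence glue to a single $\mathbb{C}$-bilinear bracket on $\mathcal{E}$. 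The Leibniz rule, the Jacobi identity, and compatibility of the bracket with the anchor are all local conditions, so they hold on $\mathcal{E}$ precisely because they hold on each $U_i^\sharp$; this exhibits $\mathcal{E}$ as an object of $\left(\LA{\mathcal{O}}^\mathtt{lf}\right)^\sharp_X$ with $\widetilde{\pi_P^\blacktriangledown}\mathcal{E}\cong (\mathcal{F},g_\mathcal{F})$. Full faithfulness is again sheaf descent for morphisms: a morphism of descent data is a compatible family of Lie-algebroid maps $\mathcal{A}_i\to\mathcal{A}_i'$, which glues to a single morphism of sheaves, automatically a morphism of Lie algebroids since bracket- and anchor-compatibility are checked locally.

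The step I expect to require the most care is the identification of the abstract inverse image $j^{\sharp +}$ with honest restriction along open embeddings, together with the verification that the Lie bracket is genuinely a piece of the local gluing data preserved by the transition isomorphisms; once these are pinned down, the remainder is a routine application of sheaf descent for modules layered with the observation that every defining identity of a Lie algebroid is pointwise. With classical descent thus in hand, Theorem \ref{thm: smooth descent} yields the smooth descent property for $\left(\LA{\mathcal{O}}^\mathtt{lf}\right)^\sharp$.
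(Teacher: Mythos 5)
Your proposal is correct and follows the same route as the paper: the corollary is obtained by exhibiting $\left(\LA{\mathcal{O}}^\mathtt{lf}\right)^\sharp$ as an instance of the general framework of \ref{subsection: general framework} (done in \ref{subsection: Smooth localization for Lie algebroids}, using Lemma \ref{lemma: cfg lie algd} for the prefibered structure) and then invoking Theorem \ref{thm: smooth descent}, which requires only the classical descent property. The paper treats classical descent as immediate and gives no argument for it; your explicit verification---identifying $j^{\sharp+}$ with restriction along open embeddings since $dj^\sharp$ is an isomorphism, gluing the underlying anchored modules, and gluing the brackets because the transition maps are Lie algebroid isomorphisms and all the axioms are local---correctly fills in exactly what the paper leaves implicit.
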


\subsection{Smooth localization for marked Lie algebroids}\label{subsection: Smooth localization for marked Lie algebroids}
Let $\left(\LA{\mathcal{O}}^\star_n\right)^\sharp$ denote the category with objects pairs $(X,(\mathcal{A},\mathfrak{c}))$, where $X$ is a manifold and $(\mathcal{A},\mathfrak{c})\in\LA{\mathcal{O}_{X^\sharp}}^\star_n$. A morphism $u \colon (Y,(\mathcal{B},\mathfrak{b})) \to (X,(\mathcal{A},\mathfrak{c}))$ is a pair $u=(f, t)$, where $f \colon Y \to X$ is a map of manifolds and $t \colon (\mathcal{B},\mathfrak{b}) \to f^{\sharp +}\mathcal{A},\mathfrak{c})$ is a morphism in $\LA{\mathcal{O}_{Y^\sharp}}^\star_n$. It follows from Lemma \ref{lemma: cfg lie algd} and Lemma \ref{lemma: preservation of marking} that the forgetful functor $(X,(\mathcal{A},\mathfrak{c})) \mapsto X$ makes $\left(\LA{\mathcal{O}}^\star_n\right)^\sharp$ a category prefibered over $\Man$.

Let $\left(\LA{\mathcal{O}}^{\star\mathtt{lf}}_n\right)^\sharp_X$ denote the full subcategory of $\left(\LA{\mathcal{O}}^\star_n\right)^\sharp_X$ with objects locally free of finite rank over $\mathcal{O}_{X^\sharp}$.

This is an example of the framework of \ref{subsection: general framework} with $\mathcal{P} = \left(\LA{\mathcal{O}}^\star_n\right)^\sharp$, $\mathcal{P}^\flat_X = \left(\LA{\mathcal{O}}^{\star\mathtt{lf}}_n\right)^\sharp_X$ and the functor of inverse image defined in, \ref{subsection: Inverse image for marked Lie algebroids}.

\begin{corollary}[of Theorem \ref{thm: smooth descent}]
$\left(\LA{\mathcal{O}}^\star_n\right)^\sharp$ has the smooth descent property.
\end{corollary}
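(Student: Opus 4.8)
The plan is to invoke Theorem \ref{thm: smooth descent}. Since $\mathcal{P}^\flat = \left(\LA{\mathcal{O}}^{\star\mathtt{lf}}_n\right)^\sharp$ fits the framework of \ref{subsection: general framework}, as recorded in \ref{subsection: Smooth localization for marked Lie algebroids}, it suffices to verify that it enjoys the \emph{classical} descent property; the passage from classical to smooth is then automatic by the theorem.

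First I would unwind what classical descent demands. A classical cover $P \xrightarrow{\pi_P} X$ is a disjoint union of open embeddings $\coprod_i U_i \to X$ with $\{U_i\}$ an open cover of $X$. For an open embedding $j \colon U \hookrightarrow X$ the induced $j^\sharp$ is again an open embedding, so $dj^\sharp \colon \mathcal{T}_{U^\sharp} \to j^{\sharp*}\mathcal{T}_{X^\sharp}$ is an isomorphism and hence $j^{\sharp+}\mathcal{A} \cong j^{\sharp*}\mathcal{A}$ is simply the restriction $\mathcal{A}|_{U^\sharp}$. Consequently an object of $\Desc(P;\mathcal{P}^\flat)$ is exactly a family of locally free marked Lie algebroids $\mathcal{F}_i$ on $U_i^\sharp$ together with isomorphisms $g_{ij} \colon \mathcal{F}_j|_{(U_i\cap U_j)^\sharp} \to \mathcal{F}_i|_{(U_i\cap U_j)^\sharp}$ satisfying the cocycle condition on triple overlaps, that is, ordinary gluing data along the open cover.

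I would then glue. The underlying sheaves of $\mathcal{O}_{X^\sharp}$-modules glue to a locally free $\mathcal{A}$ because sheaves of modules form a stack for the ordinary topology, and local freeness of finite rank, being a local condition, is inherited. The structure maps glue by the sheaf property: the anchors $\mathcal{F}_i \to \mathcal{T}_{U_i^\sharp}$, compatible with the $g_{ij}$, patch to an anchor $\mathcal{A} \to \mathcal{T}_{X^\sharp}$; the $\mathbb{C}$-linear brackets, being morphisms of sheaves $\mathcal{F}_i\otimes_\mathbb{C}\mathcal{F}_i \to \mathcal{F}_i$ agreeing on overlaps, patch to a bracket on $\mathcal{A}$; and the marking sections $\mathfrak{c}_i \in \Gamma(U_i;\mathcal{F}_i)$, which the $g_{ij}$ match by definition of a morphism in the marked category, glue to a global section $\mathfrak{c}$. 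All the defining identities, namely the Leibniz rule, the Jacobi identity, compatibility of the anchor, and centrality of $\mathfrak{c}$, are equalities of sheaf morphisms, hence hold globally once they hold on each $U_i^\sharp$. This produces $(\mathcal{A},\mathfrak{c}) \in \mathcal{P}^\flat_X$ restricting to the given descent datum, and the same patching argument applied to morphisms shows the localization functor \eqref{localization to descent data} is fully faithful, yielding the required equivalence.

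The step I expect to require the most care is the identification in the second paragraph that, over a classical cover, $\phi^+$ reduces to honest restriction and hence $\Desc(P;\mathcal{P}^\flat)$ is literally gluing data; once this is pinned down the remainder is the standard stack-theoretic gluing for sheaves together with the observation that all the extra structure is local. If one prefers a shortcut, the gluing of the underlying Lie algebroid may be quoted from the already-established descent property of $\left(\LA{\mathcal{O}}^\mathtt{lf}\right)^\sharp$ (classical covers being smooth covers), reducing the new content to gluing the marking $\mathfrak{c}$ and checking its centrality, both of which are immediate from the sheaf property.
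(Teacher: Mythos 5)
Your proposal is correct and follows exactly the route the paper intends: the paper states this corollary without proof, the implicit argument being precisely that the marked-Lie-algebroid setting fits the framework of \ref{subsection: general framework} (as recorded in \ref{subsection: Smooth localization for marked Lie algebroids}), that classical descent is ordinary sheaf-theoretic gluing (since $j^{\sharp+}$ is restriction for an open embedding $j$, as you verify), and that Theorem \ref{thm: smooth descent} then upgrades classical to smooth descent. Your write-up simply makes explicit the classical-descent verification that the paper leaves to the reader, including the correct observation that the marking and the identities (Jacobi, Leibniz, centrality) are local and therefore glue.
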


\section{Inverse image for Courant algebroids}\label{section: Inverse image for Courant algebroids}
\subsection{The inverse image functor}\label{subsection: inverse image courants}
Suppose that $f \colon Y \to X$ is a map of manifolds. For $\mathcal{Q}\in \CA(X)$ the functor of inverse image under $f$ for Courant algebroids denoted
\[
f^\ii \colon \CA(X) \to \CA(Y)
\]
is defined by
\[
f^\ii\mathcal{Q} := \Cour(f^+\tau\mathcal{Q}).
\]
in the notation of \ref{subsection: Transgression for Courant algebroids}.

\subsection{Explicit description}
Suppose that $\mathcal{Q}$ is a Courant algebroid on $X$ and $f\colon Y \to X$ is a map of manifolds. It transpires from the definition, that $f^\ii\mathcal{Q}$ is a sub-quotient of $\Omega^1_Y\oplus f^*\mathcal{Q}\oplus\mathcal{T}_Y$:
\[
\Omega^1_Y\oplus f^*\mathcal{Q}\oplus\mathcal{T}_Y \hookleftarrow \Omega^1_Y\oplus f^*\mathcal{Q}\times_{f^*\mathcal{T}_X} \mathcal{T}_Y \to f^\ii\mathcal{Q} .
\]

\subsubsection{The symmetric pairing}
Let $\ip^\prime$ denote the symmetric pairing on $f^*\mathcal{Q}\oplus\mathcal{T}_Y$ such that
\begin{enumerate}
\item it restricts to the pairing induced by $\ip$ on $f^*\mathcal{Q}$,
\item $\langle\mathcal{T}_Y,\mathcal{T}_Y\rangle = 0$,
\item for $\alpha\in f^*\Omega^1_X$, $\xi\in\mathcal{T}_Y$, $\langle i(\alpha),\xi\rangle^\prime = \iota_\xi\dual{df}(\alpha)$.
\end{enumerate}

Let $\ip^{\prime\prime}$ denote the unique symmetric pairing on $\Omega^1_Y\oplus f^*\mathcal{Q}\times_{f^*\mathcal{T}_X} \mathcal{T}_Y$ such that
\begin{enumerate}
\item it restricts to the pairing $\ip^\prime$ on $f^*\mathcal{Q}\times_{f^*\mathcal{T}_X} \mathcal{T}_Y$,
\item $\langle\Omega^1_Y,\Omega^1_Y\rangle = 0$,
\item for $\alpha\in\Omega^1_Y$, $q\in f^*\mathcal{Q}\times_{f^*\mathcal{T}_X} \mathcal{T}_Y$, $\langle \alpha,q\rangle^{\prime\prime} = \iota_{\sigma(q)}\alpha$.
\end{enumerate}

Since
\[
\langle (\dual{df}(\alpha),-i(\alpha),0),(\beta,q,\xi)\rangle^{\prime\prime} = 0
\]
for all $\alpha\in f^*\Omega^1_X$, $\beta\in\Omega^1_Y$, $(q,\xi)\in f^*\mathcal{Q}\times_{f^*\mathcal{T}_X} \mathcal{T}_Y$, it follows that the pairing $\ip^{\prime\prime}$ descends to a symmetric pairing on $f^\ii\mathcal{Q}$ which we will denote by $\ip$.

\subsubsection{The bracket}
Let $[\ ,\ ]^{\prime}$ denote the unique operation on $\Omega^1_Y\oplus f^*\mathcal{Q} \oplus \mathcal{T}_Y$ defined by the formula

\[
[(\alpha, h\otimes q, \xi), (\beta, j\otimes p, \eta)]^{\prime} = (-\iota_{\eta}d\alpha + L_{\xi}\beta + jdh\langle q,p \rangle, hj\otimes [q,p]-\iota_{\eta}dh\otimes q + L_{\xi}j\otimes p, [\xi,\eta]),
\]
where $\alpha,\,\beta\in\Omega^1_Y$, $h,\, j\in\mathcal{O}_Y$, $q,\,p\in\mathcal{Q}$ and $\xi,\eta\in\mathcal{T}_Y$.

For any $\gamma\in f^{*}\Omega^1_X$, the calculation
\begin{multline*}
[(\dual{df}(\gamma),-i(\gamma),0), (\beta, j\otimes p, \eta)]^{\prime}
= (-\iota_{\eta}d(\dual{df}(\gamma)),-j\otimes[\gamma,p],0) \\
= (-\iota_{\eta}d(\dual{df}(\gamma)),j\otimes L_{\pi(p)}\gamma-j\otimes \pi^{\dagger}(d\langle p,\gamma \rangle),0) \\
=  (-\iota_{\eta}d(\dual{df}(\gamma)), j\otimes L_{\pi(p)}\gamma-j\otimes d\iota_{\pi(p)}(\gamma),0) \\
= (-\iota_{\eta}(\dual{df}(d\gamma)), j\otimes \iota_{\pi(p)}(d\gamma),0),
\end{multline*}
shows that the bracket $[\ , \ ]^{\prime}$ descends to an operation on $f^{++}(\mathcal{Q})$ which we will denoted by $[\ , \ ]$.

\begin{remark}
The construction of inverse image for Courant algebroids specializes to that \citep{LM} under suitable transversality assumptions.
\end{remark}

\subsection{Twisting by a 3-form}
Suppose that $\mathcal{Q}$ is a Courant algebroid on $X$ and $H$ is a closed 3-form on $X$. Recall that the $H$-twist of $\mathcal{Q}$, denoted $\mathcal{Q}_H$, is the Courant algebroid whose symmetric pairing and the anchor map coincide with the ones given on $\mathcal{Q}$ and the bracket is given by the $$[q,p]_{H}=[q,p]+\pi^{\dagger}({\iota_{\pi(p)}\iota_{\pi(q)}H}),$$ for any $q,p\in\mathcal{Q}$.

Suppose that $f\colon Y \to X$ is a map of manifolds. 

\begin{lemma}
\[
(f^\ii\mathcal{Q})_{f^*H} = f^\ii\mathcal{Q}_H .
\]
\end{lemma}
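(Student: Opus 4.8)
The plan is to exploit that both $(f^\ii\mathcal{Q})_{f^*H}$ and $f^\ii\mathcal{Q}_H$ are built on the \emph{same} underlying data except for the Leibniz bracket, so that the lemma reduces to a single bracket comparison. First I would observe that $\mathcal{Q}$ and $\mathcal{Q}_H$ share the same underlying $\mathcal{O}_X$-module, symmetric pairing, anchor $\pi$ and co-anchor $\pi^\dagger$, differing only in the bracket. Since the transgression produces the underlying $\mathcal{O}_{X^\sharp}$-module of $\tau(-)$ out of the module and $\pi^\dagger$ alone (the map $\int$ depends only on the grading), and since $f^+$ and $\Cour$ likewise only touch the module, anchor and pairing, the three Courant algebroids $f^\ii\mathcal{Q}$, $f^\ii\mathcal{Q}_H$ and $(f^\ii\mathcal{Q})_{f^*H}$ all have the same underlying module, pairing, anchor $\sigma$ and co-anchor (here $\sigma$ is the projection to $\mathcal{T}_Y$, and the co-anchor sends $\omega \mapsto (\omega,0,0)$). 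Thus it suffices to show that the two brackets coincide; note also that $f^*H$ is closed because $H$ is, so both twists are legitimate.

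Next I would use the explicit description of $f^\ii\mathcal{Q}$ as the sub-quotient of $\Omega^1_Y\oplus f^*\mathcal{Q}\oplus\mathcal{T}_Y$ together with the formula for $[\ ,\ ]'$. Running the same construction for $\mathcal{Q}_H$ leaves the pairing-dependent term $jdh\langle q,p\rangle$ unchanged and replaces only $hj\otimes[q,p]$ by $hj\otimes[q,p]_H$ in the $f^*\mathcal{Q}$-slot. Since $[q,p]_H = [q,p] + \pi^\dagger(\iota_{\pi(p)}\iota_{\pi(q)}H)$, the bracket for $f^\ii\mathcal{Q}_H$ is $[\ ,\ ]'$ corrected by
\[
(0,\ hj\otimes\pi^\dagger(\iota_{\pi(p)}\iota_{\pi(q)}H),\ 0).
\]
On the other hand, twisting $f^\ii\mathcal{Q}$ by $f^*H$ corrects its descended bracket by the co-anchor applied to $\iota_{\sigma(\cdot)}\iota_{\sigma(\cdot)}f^*H$, which on representatives $(\alpha,h\otimes q,\xi)$, $(\beta,j\otimes p,\eta)$ lifts to
\[
(\iota_\eta\iota_\xi f^*H,\ 0,\ 0).
\]
So everything comes down to showing that these two correction terms represent the same class in $f^\ii\mathcal{Q}$.

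For the comparison I would invoke the quotient relation used in the explicit description, namely that $(\dual{df}(\gamma),-i(\gamma),0)$ vanishes in $f^\ii\mathcal{Q}$ for $\gamma\in f^*\Omega^1_X$, where $i = f^*(\pi^\dagger)$. Taking $\gamma = hj\otimes(\iota_{\pi(p)}\iota_{\pi(q)}H)$ gives $i(\gamma) = hj\otimes\pi^\dagger(\iota_{\pi(p)}\iota_{\pi(q)}H)$, so the first correction equals $(\dual{df}(\gamma),0,0)$ modulo the quotient; this term indeed lies in the fiber product $f^*\mathcal{Q}\times_{f^*\mathcal{T}_X}\mathcal{T}_Y$ precisely because $\pi\circ\pi^\dagger = 0$ by \eqref{complex}. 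It then remains to verify the pointwise identity $\dual{df}(\gamma) = \iota_\eta\iota_\xi f^*H$ of $1$-forms on $Y$. This follows from the fiber-product constraints $df(\xi) = h\otimes\pi(q)$ and $df(\eta) = j\otimes\pi(p)$ together with naturality of pullback, $(f^*H)(\xi,\eta,\zeta) = H(df\,\xi, df\,\eta, df\,\zeta)$: both $\dual{df}(hj\otimes\iota_{\pi(p)}\iota_{\pi(q)}H)$ and $\iota_\eta\iota_\xi f^*H$ evaluate on $\zeta\in\mathcal{T}_Y$ to $hj\,H(\pi(q),\pi(p),df\,\zeta)$.

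The main obstacle is the careful bookkeeping across the sub-quotient $\Omega^1_Y\oplus(f^*\mathcal{Q}\times_{f^*\mathcal{T}_X}\mathcal{T}_Y)\to f^\ii\mathcal{Q}$: one must confirm that each correction term actually lands in the fiber product (using $\pi\circ\pi^\dagger = 0$) and then identify the two representatives modulo the quotient relation, which is exactly the point at which the identity $\dual{df}(hj\otimes\iota_{\pi(p)}\iota_{\pi(q)}H) = \iota_\eta\iota_\xi f^*H$ is invoked. Everything else is the routine verification, already summarized above, that twisting alters neither the module nor the pairing nor the anchor, so that the equality of brackets established this way yields the claimed equality $(f^\ii\mathcal{Q})_{f^*H} = f^\ii\mathcal{Q}_H$ of Courant algebroids.
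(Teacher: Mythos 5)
Your proposal is correct and follows essentially the same route as the paper's own proof: both reduce to comparing the two bracket corrections $(0,\ hj\otimes\pi^\dagger(\iota_{\pi(p)}\iota_{\pi(q)}H),\ 0)$ and $(\iota_\eta\iota_\xi f^*H,\ 0,\ 0)$ on representatives, identifying them via the relations $df(\xi)=h\otimes\pi(q)$, $df(\eta)=j\otimes\pi(p)$ and the quotient by elements $(\dual{df}(\gamma),-i(\gamma),0)$. Your write-up is in fact slightly more careful than the paper's, since you make the quotient relation and the membership in the fiber product (via $\pi\circ\pi^\dagger=0$) explicit where the paper passes over them silently.
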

\begin{proof}
The underlying $\mathcal{O}_Y$-modules of both objects coincide. The bracket on $f^\ii\mathcal{Q}_H$ is given by the formula,
\begin{multline*}
[(\alpha, h\otimes q, \xi), (\beta, j\otimes p, \eta)] \\
= (-\iota_{\eta}d\alpha + L_{\xi}\beta + jdh\langle q,p \rangle, hj\otimes [q,p]_{H}-\iota_{\eta}dh\otimes q + L_{\xi}j\otimes p, [\xi,\eta])\\ =(-\iota_{\eta}d\alpha + L_{\xi}\beta + jdh\langle q,p \rangle, hj\otimes [q,p]-\iota_{\eta}dh\otimes q + L_{\xi}j\otimes p, [\xi,\eta])+(0,hj\otimes\pi^{\dagger}(\iota_{\pi(p)}\iota_{\pi(q)}H),0) \\
= [(\alpha, h\otimes q, \xi), (\beta, j\otimes p, \eta)] +(0,hj\otimes \pi^{\dagger}(\iota_{\pi(p)}\iota_{\pi(q)}H),0).
\end{multline*}
On the other hand, the bracket on $(f^\ii\mathcal{Q})_{f^*H}$ is given by the formula
\begin{multline*}
[(\alpha, h\otimes q, \xi), (\beta, j\otimes p, \eta)]_{f^*H}= [(\alpha, h\otimes q, \xi), (\beta, j\otimes p, \eta)]+ (\iota_{\xi}\iota_{\eta}f^{*}H,0,0) \\ = [(\alpha, h\otimes q, \xi), (\beta, j\otimes p, \eta)] +(0,\pi^{\dagger}(H(df(\xi),df(\eta),\;)),0)\\
 = [(\alpha, h\otimes q, \xi), (\beta, j\otimes p, \eta)] +(0,hj\otimes \pi^{\dagger}(\iota_{\pi(p)}\iota_{\pi(q)}H),0).
\end{multline*}
\end{proof}

\subsection{Connections}
Recall that a connection $\nabla$ on a Courant algebroid $\mathcal{Q}$ on $X$ is splitting $\nabla \colon \mathcal{T}_X \to \mathcal{Q}$  of the anchor map $\pi \colon \mathcal{Q} \to \mathcal{T}_X$ isotropic with respect to the symmetric pairing on $\mathcal{Q}$. We denote by $\mathcal{C}(\mathcal{Q})$ the sheaf of locally defined connections on $\mathcal{Q}$.

Suppose that $\mathcal{Q}$ is a Courant algebroid on $X$, and $f\colon Y \to X$ is a map of manifolds. A connection $\nabla$ on $Q$ induces a splitting $f^*(\nabla) \colon f^*\mathcal{T}_X \to f^*\mathcal{Q}$ of $f^*(\pi)$ and the splitting
\begin{equation}\label{splitting induced by connection}
\mathcal{T}_Y\times_{f^*\mathcal{T}_X}f^*(\nabla) \colon \mathcal{T}_Y \to \mathcal{T}_Y\times_{f^*\mathcal{T}_X}f^*\mathcal{Q}
\end{equation}
of $\mathcal{T}_Y\times_{f^*\mathcal{T}_X}f^*(\pi)$.

Let $f^\ii(\nabla) \colon \mathcal{T}_Y \to f^\ii\mathcal{Q}$ denote the composition
\[
\mathcal{T}_Y \xrightarrow{\eqref{splitting induced by connection}} \mathcal{T}_Y\times_{f^*\mathcal{T}_X}f^*\mathcal{Q} \to f^\ii\mathcal{Q} .
\]

\begin{lemma}\label{lemma: inverse image connections}
The map $f^\ii(\nabla)$ is a connection on $f^\ii\mathcal{Q}$.
\end{lemma}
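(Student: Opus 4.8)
The plan is to verify directly the two defining conditions of a connection on the Courant algebroid $f^\ii\mathcal{Q}$: that $f^\ii(\nabla)$ is a splitting of the anchor map, and that its image is isotropic for the symmetric pairing $\ip$ constructed above. Throughout I use the explicit model of $f^\ii\mathcal{Q}$ as a subquotient of $\Omega^1_Y\oplus f^*\mathcal{Q}\oplus\mathcal{T}_Y$, writing $q_\xi := f^*(\nabla)(df(\xi))$, so that $f^*(\pi)(q_\xi)=df(\xi)$, the pair $(q_\xi,\xi)$ lies in $f^*\mathcal{Q}\times_{f^*\mathcal{T}_X}\mathcal{T}_Y$, and $f^\ii(\nabla)(\xi)$ is the class of $(0,q_\xi,\xi)$.

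First I would dispatch the splitting condition. The anchor of $f^\ii\mathcal{Q}$ sends the class of $(\alpha,q,\xi)$ to its $\mathcal{T}_Y$-component $\xi$, while $f^\ii(\nabla)$ is by construction the composite of the $\mathcal{O}_Y$-linear splitting \eqref{splitting induced by connection} of $\mathcal{T}_Y\times_{f^*\mathcal{T}_X}f^*(\pi)$ with the quotient map to $f^\ii\mathcal{Q}$. Hence $f^\ii(\nabla)$ is $\mathcal{O}_Y$-linear and the composition of the anchor with $f^\ii(\nabla)$ is the identity of $\mathcal{T}_Y$, so $f^\ii(\nabla)$ is indeed a splitting of the anchor.

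The substance of the lemma is the isotropy. Choosing representatives with vanishing $\Omega^1_Y$-component, the pairing $\langle f^\ii(\nabla)(\xi),f^\ii(\nabla)(\eta)\rangle$ equals $\langle (q_\xi,\xi),(q_\eta,\eta)\rangle^\prime$, which by the defining properties of $\ip^\prime$ decomposes into the term $\langle q_\xi,q_\eta\rangle$ computed in $f^*\mathcal{Q}$, the term $\langle\xi,\eta\rangle^\prime$, and the two cross terms $\langle q_\xi,\eta\rangle^\prime$ and $\langle q_\eta,\xi\rangle^\prime$. The first vanishes because $\nabla$ is isotropic on $\mathcal{Q}$ and isotropy is preserved by $f^*$, so $f^*(\nabla)$ is isotropic on $f^*\mathcal{Q}$; the second vanishes by the property $\langle\mathcal{T}_Y,\mathcal{T}_Y\rangle^\prime=0$.

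The hard part will be the two cross terms: the relation $\langle i(\alpha),\xi\rangle^\prime=\iota_\xi\dual{df}(\alpha)$ pins down $\ip^\prime$ only on the image of $i=f^*(\pi^\dagger)$, whereas $q_\xi$ satisfies $f^*(\pi)(q_\xi)=df(\xi)\neq 0$ and therefore lies outside $\ker f^*(\pi)\supseteq\im i$. I expect these cross terms to cancel by exactly the mechanism already used to show that $\ip^{\prime\prime}$ descends to $f^\ii\mathcal{Q}$, namely by combining the pulled-back adjunction \eqref{adjunction}, the defining relation for $\langle i(\alpha),\xi\rangle^\prime$, and the fiber-product identity $f^*(\pi)(q_\xi)=df(\xi)$. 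Concretely, I would exploit the freedom to modify the chosen representatives by elements of the form $(\dual{df}(\alpha),-i(\alpha),0)$, moving the $f^*\mathcal{Q}$-components into a position where the defining relation for $\ip^\prime$ applies, and then check that the resulting contractions cancel in pairs; verifying this cancellation with the correct signs is the one genuinely computational point of the proof.
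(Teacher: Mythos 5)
Your verification of the splitting condition and of the two diagonal terms is correct and agrees with the paper (which in fact leaves the splitting condition tacit and records only the isotropy computation). The genuine problem is that you stop exactly at what you yourself call ``the one genuinely computational point'': the vanishing of the cross terms $\langle q_\xi,\eta\rangle^\prime$ is conjectured (``I expect these cross terms to cancel'') but never established, and the mechanism you propose cannot establish it. Modifying a representative by an element $(\dual{df}(\alpha),-i(\alpha),0)$ changes the $f^*\mathcal{Q}$-component only by an element of $\im i\subseteq\ker f^*(\pi)$, so every representative of $f^\ii(\nabla)(\xi)$ has $f^*\mathcal{Q}$-component with anchor image $df(\xi)$; when $df(\xi)\neq 0$ no choice of representative ever places that component in $\im i$, which is the only locus where clause (3) of the definition of $\ip^\prime$ says anything. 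Representative-juggling therefore cannot reduce your cross terms to the stated defining relations, and your proof has a hole precisely at the step you flag as hard.

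What closes the gap is identifying the pairing correctly rather than computing harder. Clauses (1)--(3) do not determine $\ip^\prime$ on all of $f^*\mathcal{Q}\oplus\mathcal{T}_Y$; the pairing actually in force is the one inherited from the transgression construction, namely the $(-1,-1)$-component of the bracket on $f^+\tau\mathcal{Q}$. Representing a class by $(\beta\otimes\mathfrak{c}+1\otimes q\epsilon,\iota_\xi)$ and using $\iota_\xi(1)=0$ one finds
\[
\langle(\beta_1,q_1,\xi_1),(\beta_2,q_2,\xi_2)\rangle \;=\; f^*\langle q_1,q_2\rangle+\iota_{\xi_1}\beta_2+\iota_{\xi_2}\beta_1 ,
\]
so there is \emph{no} cross pairing between the $f^*\mathcal{Q}$- and $\mathcal{T}_Y$-slots at all: clause (3) is not an independent datum but a consequence of clause (1), the pulled-back adjunction \eqref{adjunction}, and the fiber-product constraint $f^*(\pi)(q)=df(\xi)$. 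Once this is in hand, the paper's entire proof is the one-line computation $\langle(0,f^*(\nabla)(df(\xi)),\xi),(0,f^*(\nabla)(df(\eta)),\eta)\rangle=f^*\langle\nabla(df(\xi)),\nabla(df(\eta))\rangle=0$ by isotropy of $\nabla$ --- that is, exactly your first term; the cross terms you worry about are identically zero, not terms that cancel after a sign-sensitive calculation. (Your instinct that clauses (1)--(3) underdetermine $\ip^\prime$ is a fair criticism of the paper's exposition, but the resolution lies in the transgression definition of $f^\ii\mathcal{Q}$, not in the maneuver you sketch.)
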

\begin{proof}
For $\xi\in\mathcal{T}_Y$, $f^\ii(\nabla)(\xi)=(0,f^*(\nabla)(df(\xi)),\xi)$. Then
\[
\langle f^\ii(\nabla)(\xi),f^\ii(\nabla)(\eta)\rangle = 
\langle (0,f^*(\nabla)(df(\xi)),\xi), (0,f^*(\nabla)(df(\eta)),\eta)\rangle = 0.
\]
\end{proof}

Therefore, the map $f$ induces the morphism of sheaves
\begin{equation}\label{pull back of connections}
f^\ii \colon f^{-1}\mathcal{C}(\mathcal{Q}) \to \mathcal{C}(f^\ii\mathcal{Q}) .
\end{equation}

\subsection{Inverse image and linear algebra}
Suppose that $f\colon Y \to X$ is a map of manifolds and $\mathcal{A}\in\LA{\mathcal{O}_X}$.
\begin{lemma}\label{lemma: preservation Courant extensions}
Suppose that $\mathcal{Q}$ is a Courant extension of $\mathcal{A}$. Then, $f^\ii\mathcal{Q}$ is a Courant extension of $f^+\mathcal{A}$.
\end{lemma}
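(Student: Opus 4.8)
The plan is to transport the statement to the transgression side, where $f^\ii\mathcal{Q}$ is by definition $\Cour(f^{\sharp +}\tau\mathcal{Q})$, and to use that inverse image of marked Lie algebroids preserves $\mathcal{O}[n]$-extensions. Since $\mathcal{Q}$ is a Courant extension of $\mathcal{A}$, property (3) of the transgression functor of Subsection \ref{subsection: Transgression for Courant algebroids}, equivalently Proposition \ref{lemma: OExt and CExt}, gives $\tau\mathcal{Q}\in\OExt{2}(\mathcal{A}^\sharp)$. Applying Lemma \ref{lemma: inverse image preserves extensions} to the morphism of DG-manifolds $f^\sharp\colon Y^\sharp\to X^\sharp$, with $n=2$ and $\mathcal{B}=\mathcal{A}^\sharp$, I conclude that $f^{\sharp +}\tau\mathcal{Q}$ is an $\mathcal{O}_{Y^\sharp}[2]$-extension of $f^{\sharp +}(\mathcal{A}^\sharp)$, i.e.\ $f^{\sharp +}\tau\mathcal{Q}\in\OExt{2}(f^{\sharp +}\mathcal{A}^\sharp)$.

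The crucial step, which I expect to be the main obstacle, is the canonical identification of $\mathcal{O}_{Y^\sharp}$-Lie algebroids
\[
f^{\sharp +}(\mathcal{A}^\sharp)\;\cong\;(f^+\mathcal{A})^\sharp ,
\]
asserting that transgression for Lie algebroids commutes with inverse image. On underlying modules over $\mathcal{T}_{Y^\sharp}$ this reads $\mathcal{T}_{Y^\sharp}\times_{f^{\sharp *}\mathcal{T}_{X^\sharp}} f^{\sharp *}(\pr_*\ev^*\mathcal{A})\cong\pr_*\ev^*(\mathcal{T}_Y\times_{f^*\mathcal{T}_X} f^*\mathcal{A})$. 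Since $\mathcal{O}_{X^\sharp}[\epsilon]$ is locally free, hence flat, over $\mathcal{O}_X$, the functor $\pr_*\ev^*$ is exact and therefore preserves fibre products; applied to the defining fibre product of $f^+\mathcal{A}$ it turns the right-hand side into $\mathcal{T}_Y^\sharp\times_{(f^*\mathcal{T}_X)^\sharp}(f^*\mathcal{A})^\sharp$. I would then invoke the isomorphism $\mathcal{T}_X^\sharp\cong\mathcal{T}_{X^\sharp}$ of Subsection \ref{subsection: Transgression for Lie algebroids} together with a base-change isomorphism $(f^*\mathcal{E})^\sharp\cong f^{\sharp *}(\mathcal{E}^\sharp)$, valid for every $\mathcal{E}\in\Mod{\mathcal{O}_X}$ and issuing from the commuting squares relating $\ev$ and $\pr$ for $X$ and $Y$ under $f^\sharp\times\id$. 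This yields the desired isomorphism of modules over $\mathcal{T}_{Y^\sharp}$; it then remains to check that it intertwines the two brackets, which I would verify on generators by matching the explicit bracket on $f^+\mathcal{A}$ of Subsection \ref{subsection: inverse image Lie algd} against the bracket of the transgression.

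Granting this identification, $f^{\sharp +}\tau\mathcal{Q}\in\OExt{2}\big((f^+\mathcal{A})^\sharp\big)$, and its associated Lie algebroid $(f^+\mathcal{A})^\sharp$ is concentrated in degrees $\geqslant -1$, so the vanishing hypothesis needed to form $\Cour$ holds. Proposition \ref{lemma: OExt and CExt}, applied over $Y$ to the Lie algebroid $f^+\mathcal{A}$, supplies the equivalence $\Cour\colon\OExt{2}((f^+\mathcal{A})^\sharp)\to\CExt{f^+\mathcal{A}}$ quasi-inverse to $\tau$. Hence $f^\ii\mathcal{Q}=\Cour(f^{\sharp +}\tau\mathcal{Q})\in\CExt{f^+\mathcal{A}}$, which is precisely the assertion that $f^\ii\mathcal{Q}$ is a Courant extension of $f^+\mathcal{A}$.

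As a cross-check, one can argue directly from the explicit subquotient description of $f^\ii\mathcal{Q}$ recorded above. Pulling back the exact sequence \eqref{CExt transitive exact sequence}, which remains exact because $\mathcal{A}$ (hence $\mathcal{Q}$) is locally free, one verifies that the co-anchor $\Omega^1_Y\to f^\ii\mathcal{Q}$ is injective, with the relation $\gamma\mapsto(\dual{df}(\gamma),-f^*(\pi^\dagger)(\gamma),0)$ accounting for the quotient, and that the cokernel of the co-anchor is isomorphic to $f^*\mathcal{A}\times_{f^*\mathcal{T}_X}\mathcal{T}_Y=f^+\mathcal{A}$. This reproduces the defining short exact sequence of a Courant extension of $f^+\mathcal{A}$.
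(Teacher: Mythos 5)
Your proof is correct and takes essentially the same route as the paper, whose entire proof consists of citing exactly the two ingredients you use: Proposition \ref{lemma: OExt and CExt} and Lemma \ref{lemma: inverse image preserves extensions} applied to $f^\sharp$ with $n=2$ and $\mathcal{B}=\mathcal{A}^\sharp$. The only difference is that you make explicit (and correctly sketch) the identification $f^{\sharp +}(\mathcal{A}^\sharp)\cong(f^+\mathcal{A})^\sharp$ and the vanishing condition needed to apply $\Cour$ over $Y$, both of which the paper's one-line proof leaves tacit.
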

\begin{proof}
Follows from Proposition \ref{lemma: OExt and CExt} and Lemma \ref{lemma: inverse image preserves extensions}.
\end{proof}

Thus, the inverse image functor for Courant algebroids induces a functor
\begin{equation}\label{inverse image courant extension}
f^\ii \colon \CExt{\mathcal{A}} \to \CExt{f^+\mathcal{A}}
\end{equation}

\begin{proposition}
The functor \eqref{inverse image courant extension} is a morphism of $\mathbb{C}$-vector spaces in categories (and, in particular, of Picard groupoids).
\end{proposition}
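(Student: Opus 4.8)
The plan is to realise the functor \eqref{inverse image courant extension} as a composition of functors each of which is already known to be a morphism of $\mathbb{C}$-vector spaces in categories, and then to invoke the (essentially formal) fact that such morphisms are closed under composition. By the very definition of the inverse image for Courant algebroids, $f^\ii\mathcal{Q} = \Cour(f^{\sharp +}\tau\mathcal{Q})$, the inverse image being taken along $f^\sharp \colon Y^\sharp \to X^\sharp$. Hence the restriction of $f^\ii$ to $\CExt{\mathcal{A}}$ is the composite
\[
\CExt{\mathcal{A}} \xrightarrow{\ \tau\ } \OExt{2}(\mathcal{A}^\sharp) \xrightarrow{\ f^{\sharp +}\ } \OExt{2}\big((f^+\mathcal{A})^\sharp\big) \xrightarrow{\ \Cour\ } \CExt{f^+\mathcal{A}},
\]
where the first arrow is the equivalence of Proposition \ref{lemma: OExt and CExt}, the second is the functor \eqref{inverse image extensions Lie} for the map $f^\sharp$, and the third is the quasi-inverse of Proposition \ref{lemma: OExt and CExt} over $Y$. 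Here I use the identification $f^{\sharp +}(\mathcal{A}^\sharp) \cong (f^+\mathcal{A})^\sharp$ of marked Lie algebroids implicit in Lemma \ref{lemma: preservation Courant extensions}, which is exactly what makes the third functor land in $\CExt{f^+\mathcal{A}}$.

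Next I would record the linearity of each factor. The functors $\tau$ and $\Cour$ are morphisms of $\mathbb{C}$-vector spaces in categories by Proposition \ref{lemma: tau and Q are linear}, and $f^{\sharp +}$ on $\mathcal{O}[2]$-extensions is one by Proposition \ref{lemma: inverse image is linear}. Thus for objects $\mathcal{Q}_1,\dots,\mathcal{Q}_n \in \CExt{\mathcal{A}}$, objects $\mathcal{A}_1,\dots,\mathcal{A}_n$ of the relevant $\OExt{2}$ categories, and scalars $\lambda_1,\dots,\lambda_n \in \mathbb{C}$ one has coherent comparison isomorphisms
\[
\tau\Big(\opdp_{i}\lambda_i\mathcal{Q}_i\Big) \cong \opdp_i\lambda_i\tau\mathcal{Q}_i, \qquad f^{\sharp +}\Big(\opdp_i\lambda_i\mathcal{A}_i\Big) \cong \opdp_i\lambda_i f^{\sharp +}\mathcal{A}_i, \qquad \Cour\Big(\opdp_i\lambda_i\mathcal{A}_i\Big) \cong \opdp_i\lambda_i\Cour\mathcal{A}_i.
\]
Splicing these three isomorphisms along the composite above produces the desired comparison isomorphism
\[
f^\ii\Big(\opdp_i\lambda_i\mathcal{Q}_i\Big) = \Cour f^{\sharp +}\tau\Big(\opdp_i\lambda_i\mathcal{Q}_i\Big) \cong \opdp_i\lambda_i\,\Cour f^{\sharp +}\tau\mathcal{Q}_i = \opdp_i\lambda_i f^\ii\mathcal{Q}_i .
\]

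Finally I would verify that this spliced isomorphism is natural and satisfies the coherence constraints of a morphism of $\mathbb{C}$-vector spaces in categories (compatibility with associativity and commutativity of $\opdp$, with the zero object, and with iterated scalar multiplication). This is formal: the coherence data of a composite $G\circ F$ are obtained by pasting those of $F$ and $G$, and a paste of commuting coherence diagrams commutes. I expect the only genuine point of care — and hence the main obstacle — to be checking that the identification $f^{\sharp +}(\mathcal{A}^\sharp)\cong (f^+\mathcal{A})^\sharp$ intertwines the two $\opdp$-structures, so that the middle functor $f^{\sharp +}$ may legitimately be read as a \emph{linear} functor with target $\OExt{2}((f^+\mathcal{A})^\sharp)$; this follows since $\OExt{2}(-)$ and its vector-space structure, being built from fiber products over the base Lie algebroid and push-outs along $\cdot\mathfrak{c}$, are transported along any isomorphism of base Lie algebroids. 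Once this compatibility is secured, the remainder is the formal closure of morphisms of $\mathbb{C}$-vector spaces in categories under composition.
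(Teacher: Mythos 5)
Your proposal is correct and takes essentially the same route as the paper: the paper's entire proof is the observation that the statement ``follows from Proposition \ref{lemma: tau and Q are linear} and Proposition \ref{lemma: inverse image is linear}," i.e., exactly your factorization $f^\ii = \Cour\circ f^{\sharp +}\circ\tau$ through $\OExt{2}(\mathcal{A}^\sharp)$ together with closure of morphisms of $\mathbb{C}$-vector spaces in categories under composition. The details you make explicit --- reading the inverse image in the definition of $f^\ii$ as taken along $f^\sharp$, and the identification $f^{\sharp +}(\mathcal{A}^\sharp)\cong(f^{+}\mathcal{A})^\sharp$ needed for the composite to land in $\CExt{f^+\mathcal{A}}$ --- are precisely what the paper leaves implicit, the latter already being tacitly used in Lemma \ref{lemma: preservation Courant extensions}.
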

\begin{proof}
Follows from Proposition \ref{lemma: tau and Q are linear} and Proposition \ref{lemma: inverse image is linear}.
\end{proof}

\subsection{Exact Courant algebroids}
Suppose that $\mathcal{Q}$ is an exact Courant algebroid on $X$. Recall (\citep{B}, 3.7) that the sheaf $\mathcal{C}(\mathcal{Q})$ of locally defined connections on $\mathcal{Q}$ together with the curvature map $c \colon \mathcal{C}(\mathcal{Q}) \to \Omega^{3,cl}_X$ is a $(\Omega^2_X \to \Omega^{3,cl}_X)$-torsor and the assignment $\mathcal{Q} \mapsto (\mathcal{C}(\mathcal{Q}), c)$ defines an equivalence $\ECA(X) \to (\Omega^2_X \to \Omega^{3,cl}_X)\mathtt{-torsors}$ of $\mathbb{C}$-vector spaces in categories.

Suppose that $f\colon Y \to X$ is a map of manifolds. The map \eqref{pull back of connections} is a morphism of torsors relative to the map $f^* \colon f^{-1}\Omega^2_X \to \Omega^2_Y$, hence induces the morphism of $\Omega^2_Y$-torsors
\begin{equation}\label{pull back connections torsor}
f^\ii \colon f^*\mathcal{C}(\mathcal{Q}) := \Omega^2_Y\times_{f^{-1}\Omega^2_X}f^{-1}\mathcal{C}(\mathcal{Q}) \to \mathcal{C}(f^\ii\mathcal{Q}).
\end{equation}

\begin{lemma}\label{lemma: pullback curvature}
The diagram
\[
\begin{CD}
f^{-1}\mathcal{C}(\mathcal{Q}) @>{f^\ii}>> \mathcal{C}(f^\ii\mathcal{Q}) \\
@V{f^*(c)}VV @VV{c}V \\
f^{-1}\Omega^{3,cl}_X @>{f^*}>>\Omega^{3,cl}_Y
\end{CD}
\]
is commutative.
\end{lemma}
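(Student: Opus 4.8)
The plan is to reduce the assertion to the identity of $3$-forms $c(f^\ii\nabla) = f^*(c(\nabla))$ on $Y$, valid for every local connection $\nabla$ on $\mathcal{Q}$. First I would record a structural input: by Lemma \ref{lemma: preservation Courant extensions}, applied to the sequence exhibiting the exact $\mathcal{Q}$ as a Courant extension of $\mathcal{T}_X$, the inverse image $f^\ii\mathcal{Q}$ is again exact, so its coanchor $\pi^\dagger_{f^\ii}\colon\Omega^1_Y\to f^\ii\mathcal{Q}$ is injective. I will then use the characterization of the curvature of an isotropic splitting $s$ of the anchor of an exact Courant algebroid as the unique closed $3$-form $c(s)$ with
\[
\{s\xi_1, s\xi_2\} - s[\xi_1,\xi_2] = \pi^\dagger(\iota_{\xi_2}\iota_{\xi_1}c(s))
\]
for local vector fields $\xi_1,\xi_2$, injectivity of $\pi^\dagger$ being what makes $c(s)$ well defined. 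Since $c(f^\ii\nabla)$ and $f^*(c(\nabla))$ are both $\mathcal{O}_Y$-linear and alternating, it suffices to compare them on local vector fields on $Y$. The advantage of phrasing things through this bracket relation rather than through the pairing is that the entire argument becomes a local bracket computation and never touches the auxiliary pairing $\ip^\prime$.

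Concretely, I would fix local coordinates $x^a$ on $X$, set $e_a := \nabla(\partial_a)\in\mathcal{Q}$, and record the two facts $\langle e_a,e_b\rangle = 0$ (isotropy of $\nabla$) and $\{e_a,e_b\} = \pi^\dagger(\iota_{\partial_b}\iota_{\partial_a}H)$, where $H := c(\nabla)$ and I used $[\partial_a,\partial_b]=0$. By Lemma \ref{lemma: inverse image connections} the pulled-back splitting is $f^\ii\nabla(\xi) = (0,\, f^*(\nabla)(df(\xi)),\, \xi)$, and with $f^a := x^a\circ f$ one has $f^*(\nabla)(df(\xi_i)) = \sum_a \xi_i(f^a)\,e_a$. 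Feeding these into the explicit bracket formula for $f^\ii\mathcal{Q}$, the decisive simplification is that the $\Omega^1_Y$-component $j\,dh\langle q,p\rangle$ is a multiple of $\langle e_a,e_b\rangle$ and therefore vanishes; hence $\{f^\ii\nabla\xi_1, f^\ii\nabla\xi_2\}$ takes the form $(0, R, [\xi_1,\xi_2])$ with an explicit $R\in f^*\mathcal{Q}$.

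Next I would subtract $f^\ii\nabla[\xi_1,\xi_2] = (0,\ \sum_a [\xi_1,\xi_2](f^a)\,e_a,\ [\xi_1,\xi_2])$ and check that the $f^*\mathcal{Q}$-component of the difference lies in $\im(f^*\pi^\dagger) = \ker(f^*\pi)$: applying $f^*\pi$ to both $R$ and to $f^*(\nabla)(df[\xi_1,\xi_2])$ yields $df[\xi_1,\xi_2]$, the two agreeing because $\xi_1(\xi_2 f^a)-\xi_2(\xi_1 f^a) = [\xi_1,\xi_2](f^a)$. Carrying out the cancellation of the derivative terms $-\iota_{\xi_2}dh\otimes q$ and $L_{\xi_1}j\otimes p$ against this discrepancy, the difference collapses to $f^*\pi^\dagger(\gamma)$ with $\gamma = \iota_{df(\xi_2)}\iota_{df(\xi_1)}(f^*H)\in f^*\Omega^1_X$. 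Finally I would invoke the defining relation $(\dual{df}(\gamma),\,-i(\gamma),\,0)\sim 0$ of the sub-quotient to rewrite the class $(0,\, f^*\pi^\dagger(\gamma),\,0)$ as $(\dual{df}(\gamma),0,0)=\pi^\dagger_{f^\ii}(\dual{df}(\gamma))$, and identify $\dual{df}(\gamma)=\iota_{\xi_2}\iota_{\xi_1}(f^*H)$ through the transpose relation $\iota_\eta\dual{df}(\gamma)=\iota_{df(\eta)}\gamma$. Comparing with the characterization above gives $c(f^\ii\nabla)=f^*H=f^*(c(\nabla))$, which is exactly the commutativity of the diagram.

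I expect the main obstacle to be the bookkeeping in the penultimate step: organizing the explicit $R$ so that the anchor and derivative contributions cancel cleanly and recognizing the surviving term as $f^*\pi^\dagger$ of a contraction of $f^*H$. The one input that must not be overlooked is the exactness of $f^\ii\mathcal{Q}$, hence injectivity of $\pi^\dagger_{f^\ii}$, since this is what legitimizes the curvature characterization and lets me conclude from the bracket identity alone, bypassing any direct manipulation of the descended pairing.
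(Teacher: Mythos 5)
Your proposal is correct and takes essentially the same route as the paper: both compute $[f^\ii(\nabla)(\xi),f^\ii(\nabla)(\eta)]-f^\ii(\nabla)([\xi,\eta])$ directly in the sub-quotient model $\Omega^1_Y\oplus f^*\mathcal{Q}\times_{f^*\mathcal{T}_X}\mathcal{T}_Y$ using the explicit bracket formula, with isotropy of $\nabla$ killing the $\Omega^1_Y$-component and the defining relation of the quotient converting the residual $f^*\Omega^1_X$-term into $\dual{df}$ of the contraction of $f^*(c(\nabla))$. Your extra ingredients --- the exactness of $f^\ii\mathcal{Q}$ (hence injectivity of its co-anchor, justifying the bracket characterization of curvature) and the coordinate-level cancellation of the derivative terms against $f^*(\nabla)(df([\xi,\eta]))$ --- are exactly what the paper's terser computation leaves implicit.
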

\begin{proof}
By definition, for $\nabla\in\mathcal{C}(\mathcal{Q})$, the curvature $c(\nabla) \in \Omega^{3,cl}_X$ is defined by
\[
c(\nabla)(\xi,\eta)=[\nabla(\xi),\nabla(\eta)]-\nabla([\xi,\eta]),
\]
where $\xi, \ \eta\in\mathcal{T}_X$.

For $\xi, \ \eta\in\mathcal{T}_Y$, the computation
\begin{multline*}
c(f^\ii(\nabla))(\xi,\eta) = [f^\ii(\nabla(\xi)),f^\ii(\nabla(\eta))]-f^\ii(\nabla([\xi,\eta])) \\
= (0,[f^*(\nabla)(df(\xi)),f^*(\nabla)(df(\eta))],[\xi,\eta])-(0,f^*(\nabla)(df([\xi,\eta])),[\xi,\eta])\\
=(0,[f^*(\nabla)(df(\xi)),f^*(\nabla)(df(\eta))]-f^*(\nabla)(df([\xi,\eta])),0) \\
=((df)^{\vee}([f^*(\nabla)(df(\xi)),f^*(\nabla)(df(\eta))] - f^*(\nabla)(df([\xi,\eta]))),0,0) \\ 
= f^*(c(\nabla))(\xi,\eta)
\end{multline*}
implies the claim.
\end{proof}

\begin{proposition}
The diagram
\[
\begin{CD}
\ECA(X) @>{(\mathcal{C}, c)}>> (\Omega^2_X \to \Omega^{3,cl}_X)\mathtt{-torsors} \\
@V{f^\ii}VV @VV{f^*}V \\
\ECA(Y) @>{(\mathcal{C}, c)}>> (\Omega^2_Y \to \Omega^{3,cl}_Y)\mathtt{-torsors}
\end{CD}
\]
is commutative.
\end{proposition}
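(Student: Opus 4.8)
The plan is to produce, for each exact Courant algebroid $\mathcal{Q}$ on $X$, a natural isomorphism of $(\Omega^2_Y \to \Omega^{3,cl}_Y)$-torsors between the two composites around the square, and then to verify naturality in $\mathcal{Q}$.

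First I would confirm that both composite functors are defined. An object of $\ECA(X)$ is a Courant extension of $\mathcal{T}_X$, and since $f^+\mathcal{T}_X = \mathcal{T}_Y$, Lemma \ref{lemma: preservation Courant extensions} applied with $\mathcal{A} = \mathcal{T}_X$ shows that $f^\ii\mathcal{Q}$ is a Courant extension of $\mathcal{T}_Y$, hence exact; thus the left vertical arrow lands in $\ECA(Y)$. Evaluating the composites on $\mathcal{Q}$ yields, going right-then-down, the torsor $f^*(\mathcal{C}(\mathcal{Q}), c)$ whose underlying $\Omega^2_Y$-torsor is $\Omega^2_Y\times_{f^{-1}\Omega^2_X}f^{-1}\mathcal{C}(\mathcal{Q})$ with curvature induced by $f^*(c)$, and, going down-then-right, the torsor $(\mathcal{C}(f^\ii\mathcal{Q}), c)$.

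The substantive content is already available. The morphism \eqref{pull back connections torsor} is a morphism of $\Omega^2_Y$-torsors $f^\ii \colon f^*\mathcal{C}(\mathcal{Q}) \to \mathcal{C}(f^\ii\mathcal{Q})$, and any morphism of torsors over a sheaf of abelian groups is automatically an isomorphism; hence $f^\ii$ is an isomorphism of $\Omega^2_Y$-torsors. Lemma \ref{lemma: pullback curvature} shows that it intertwines $f^*(c)$ with $c$, so it is in fact an isomorphism of $(\Omega^2_Y \to \Omega^{3,cl}_Y)$-torsors, providing the objectwise comparison that witnesses commutativity of the square.

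Finally I would check naturality: for a morphism $\phi \colon \mathcal{Q}_1 \to \mathcal{Q}_2$ in $\ECA(X)$ the square relating the isomorphisms $f^\ii$ for $\mathcal{Q}_1$ and $\mathcal{Q}_2$ through the maps induced by $\phi$ on connection torsors must commute. I expect this to be the only point requiring care; I would prove it by unwinding the definition \eqref{pull back of connections} of pullback of connections together with the functoriality of inverse image for Courant algebroids and of $f^*$ on torsors. Since every map in sight is induced by the same underlying constructions, the verification is formal, and the main obstacle is bookkeeping rather than any genuine difficulty.
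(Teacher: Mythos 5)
Your proposal is correct and follows essentially the same route as the paper: the paper's proof consists precisely of observing that the torsor morphism \eqref{pull back connections torsor} furnishes the isomorphism $f^*\circ\mathcal{C} \cong \mathcal{C}\circ f^\ii$ (implicitly using that any morphism of torsors is an isomorphism, as you make explicit) and invoking Lemma~\ref{lemma: pullback curvature} for compatibility with the curvature map. The only difference is that you spell out the well-definedness of the square via Lemma~\ref{lemma: preservation Courant extensions} and flag the naturality check, both of which the paper leaves implicit.
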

\begin{proof}
The map \eqref{pull back connections torsor} provides the isomorphism $f^*\circ\mathcal{C} \cong \mathcal{C}\circ f^\ii$. Lemma \ref{lemma: pullback curvature} says that it is a morphism of $(\Omega^2_Y \to \Omega^{3,cl}_Y)$-torsors.
\end{proof}

\subsection{Smooth descent for Courant algebroids}\label{subsection: Smooth localization for Courant algebroids}
Let $\CA$ denote the category with objects pairs $(X,\mathcal{Q})$, where $X$ is a manifold and $\mathcal{Q} \in \CA(X)$. A morphism $u \colon (Y,\mathcal{Q}^\prime) \to (X,\mathcal{Q})$ is a pair $u=(f, t)$, where $f \colon Y \to X$ is a map of manifolds and $t \colon \mathcal{Q}^\prime \to f^\ii\mathcal{Q}$ is a morphism in $\CA(Y)$. Lemma \ref{lemma: cfg lie algd} and the definition of the inverse image functor imply that the forgetful functor $(X,\mathcal{Q}) \mapsto X$ makes $\CA$ a prefibered category over $\Man$.

Let $\CA^\mathtt{lf}(X)$ denote the full subcategory of $\CA(X)$ with objects locally free of finite rank over $\mathcal{O}_X$. This is an example of the framework of \ref{subsection: general framework} with $\mathcal{P} = \CA$, $\mathcal{P}^\flat_X = \CA^\mathtt{lf}(X)$ and the functor of inverse image defined in \ref{subsection: inverse image courants}.

\begin{corollary}[of Theorem \ref{thm: smooth descent}]
$\CA^\mathtt{lf}$ has the smooth descent property.
\end{corollary}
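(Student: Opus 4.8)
The plan is to reduce to the classical descent property and invoke Theorem~\ref{thm: smooth descent}. Thus it suffices to prove that for every classical cover $P \xrightarrow{\pi_P} X$ the localization functor \eqref{localization to descent data} is an equivalence. Writing $P = \bigsqcup_i U_i$ with each $U_i \hookrightarrow X$ an open embedding, one has $P\times_X P = \bigsqcup_{i,j} U_i\cap U_j$ and $P\times_X P\times_X P = \bigsqcup_{i,j,k} U_i\cap U_j\cap U_k$, and all the maps $\pi_P$, $\pi_i$, $\pi_{ij}$ are disjoint unions of open embeddings. The whole descent problem therefore lives over open subsets of $X$.

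The first, and crucial, step I would carry out is to identify the inverse image functor $f^\ii$ along an open embedding with restriction. For an open embedding $j\colon U \hookrightarrow X$ the derivative $df$ is an isomorphism, so the pull-back square \eqref{diag: inverse image over T} is trivially Cartesian and $j^+\mathcal{E} = \mathcal{E}|_U$. Since the underlying space of $X^\sharp$ is $X$ and the transgression $\tau\mathcal{Q}$ is assembled from the local $\mathcal{O}$-module constructions $\pr_*\ev^*$ and the cocartesian square \eqref{defn tauQ coCart square}, I expect $(\tau\mathcal{Q})|_{U^\sharp} = \tau(\mathcal{Q}|_U)$ and, likewise, that $\Cour$ commutes with restriction. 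Combining these gives
\[
j^\ii\mathcal{Q} = \Cour\big(j^{\sharp +}\tau\mathcal{Q}\big) = \Cour\big((\tau\mathcal{Q})|_{U^\sharp}\big) = \Cour\big(\tau(\mathcal{Q}|_U)\big) = \mathcal{Q}|_U ,
\]
naturally in $\mathcal{Q}$; the same then holds for the projection maps above.

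Granting this identification, the descent category $\Desc(P;\CA^\mathtt{lf})$ becomes the ordinary category of gluing data for locally free Courant algebroids: a family $\mathcal{F}_i\in\CA^\mathtt{lf}(U_i)$ together with Courant-algebroid isomorphisms $g_{ij}\colon \mathcal{F}_j|_{U_i\cap U_j} \xrightarrow{\sim}\mathcal{F}_i|_{U_i\cap U_j}$ satisfying the cocycle condition. I would then glue the underlying $\mathcal{O}$-modules by descent for sheaves, obtaining a locally free $\mathcal{O}_X$-module $\mathcal{Q}$ (local freeness being a local condition). Because each of the four structure operations --- the Leibniz bracket, the anchor $\pi$, the pairing $\ip$ and the co-anchor $\pi^\dagger$ --- is a morphism of sheaves intertwined by the $g_{ij}$, they glue to the corresponding operations on $\mathcal{Q}$, and the axioms \eqref{complex}--\eqref{symmetrizer}, being identities between local sections, hold on $X$ because they hold on each $U_i$. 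This yields $\widetilde{\pi_P^\blacktriangledown}\mathcal{Q}\cong(\mathcal{F}, g_\mathcal{F})$ and hence essential surjectivity; full faithfulness follows from the sheaf property of morphism sheaves, as a compatible family of Courant-algebroid morphisms glues uniquely to a global one.

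The main obstacle I anticipate is the second step: verifying that $f^\ii$ along an open embedding really is restriction. Although each ingredient is routine, the inverse image for Courant algebroids is defined only indirectly, through $\tau$ and $\Cour$, so one must check carefully that both functors commute with restriction to an open set --- they are built from constructions local over the common underlying space $X$. Once this is in place, the descent assertion is the standard gluing of sheaves carrying an algebraic structure and presents no further difficulty.
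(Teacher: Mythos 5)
Your proposal is correct and follows the paper's (implicit) argument: the paper obtains the corollary by placing $\CA^\mathtt{lf}$ into the framework of \ref{subsection: general framework} (the verification carried out in \ref{subsection: Smooth localization for Courant algebroids}) and invoking Theorem \ref{thm: smooth descent}, with the classical descent property left as standard. Your identification of $f^\ii$ along an open embedding $j$ with restriction (valid since $dj^\sharp$ is an isomorphism, so $j^{\sharp+}$ is restriction, and both $\tau$ and $\Cour$ are assembled from sheaf-local constructions over the common underlying space) and the subsequent gluing of the underlying module with its four structure operations is precisely the routine content the paper omits.
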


\subsection{Dirac structures with support}
Suppose that $Z$ is a submanifold of $X$ and let $i\colon Z \to X$ denote the embedding. Let $\mathcal{Q}$ be a Courant algebroid on $X$ locally free of finite rank over $\mathcal{O}_X$. For an $\mathcal{O}_X$-module $\mathcal{E}$ and a submodule $\mathcal{F}\subset i^*\mathcal{E}$ we denote by $\widetilde{\mathcal{F}}$ the sub-module of $\mathcal{E}$ defined by the Cartesian square
\[
\begin{CD}
\widetilde{\mathcal{F}} @>>> \mathcal{E} \\
@VVV @VVV \\
i_*\mathcal{F} @>>> i_*i^*\mathcal{E}
\end{CD}
\]

\begin{definition}[\citep{AX,BIS}]\label{def: dirac with support}
A \emph{Dirac structure in $\mathcal{Q}$ supported on $Z$} is a sub-bundle $\mathcal{K}\subset i^*\mathcal{Q}$ which satisfies
\begin{enumerate}
\item $\mathcal{K}$ is maximal isotropic with respect to the restriction of the symmetric pairing;
\item $\mathcal{K}$ is mapped to $\mathcal{T}_Z$ under (the restriction of) the anchor map;
\item the sheaf $\widetilde{\mathcal{K}}$ is closed under the bracket on $\mathcal{Q}$.
\end{enumerate}
\end{definition}

We denote the collection of Dirac structures in $\mathcal{Q}$ supported on $Z$ by $\DirStr_Z(\mathcal{Q})$ and set $\DirStr(\mathcal{Q}) := \DirStr_X(\mathcal{Q})$.

\begin{remark}\label{remark: almost dirac}
The second condition in Definition \ref{def: dirac with support} is equivalent to $\mathcal{K}\subset \mathcal{T}_Z\times_{i^*\mathcal{T}_X} i^*\mathcal{Q}\subset i^*\mathcal{Q}$.
\end{remark}

Let $\dual{\mathcal{N}}_{Z\vert X}$ denote the conormal bundle defined by the exact sequence
\[
0 \to \dual{\mathcal{N}}_{Z\vert X} \to i^*\Omega^1_X \xrightarrow{\dual{di}} \Omega^1_Z \to 0 ,
\]
i.e. $\dual{\mathcal{N}}_{Z\vert X} = \ann(\mathcal{T}_Z)$.

Suppose that $\mathcal{K}$ is a Dirac structure supported on $Z$. In view of Remark \ref{remark: almost dirac} $\mathcal{K} + \pi^\dagger(\dual{\mathcal{N}}_{Z\vert X})$ is isotropic. Therefore, by maximality of $\mathcal{K}$, $\mathcal{K} = \mathcal{K} + \pi^\dagger(\dual{\mathcal{N}}_{Z\vert X})$, and $\pi^\dagger(\dual{\mathcal{N}}_{Z\vert X}) \subset \mathcal{K}$.

Recall that, by definition, $i^\ii\mathcal{Q} = \mathcal{T}_Z\times_{i^*\mathcal{T}_X} i^*\mathcal{Q}/\pi^\dagger(\dual{\mathcal{N}}_{Z\vert X})$. Let
\[
i^\ii\mathcal{K} := \mathcal{K}/\pi^\dagger(\dual{\mathcal{N}}_{Z\vert X}) .
\]

\begin{proposition}\label{prop: dirsupp bj dir}
{~}
\begin{enumerate}
\item $i^\ii\mathcal{K}$ is a Dirac structure in $i^\ii\mathcal{Q}$.

\item The assignment $\mathcal{K} \mapsto i^\ii\mathcal{K}$ defines a bijection between the set of almost Dirac structures in $\mathcal{Q}$ supported on $Z$ and the set of almost Dirac structures in $i^\ii\mathcal{Q}$.
\end{enumerate}
\end{proposition}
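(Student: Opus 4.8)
The plan is to reduce the statement to linear algebra over $\mathcal{O}_Z$ together with a single bracket computation. Write $W := \mathcal{T}_Z\times_{i^*\mathcal{T}_X}i^*\mathcal{Q}$, which the injectivity of $di$ identifies with the subsheaf $\{q\in i^*\mathcal{Q} : i^*\pi(q)\in\im(di)\}$ of $i^*\mathcal{Q}$, and $R := \pi^\dagger(\dual{\mathcal{N}}_{Z\vert X})$, so that by construction $i^\ii\mathcal{Q} = W/R$. Recall from the discussion preceding the statement that any almost Dirac structure $\mathcal{K}$ supported on $Z$ satisfies $R\subset\mathcal{K}\subset W$ (Remark \ref{remark: almost dirac} together with maximality) and that $i^\ii\mathcal{K} = \mathcal{K}/R$. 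First I would pin down the pairing on $i^\ii\mathcal{Q}$. The adjunction identity together with the relation $[(0,\pi^\dagger(\alpha),0)]=[(\dual{di}(\alpha),0,0)]$ in $i^\ii\mathcal{Q}$ shows that, on the image of $W$, the induced pairing is the one coming from $\ip$ on $i^*\mathcal{Q}$; in particular $\langle \pi^\dagger(\alpha),q\rangle = \iota_{i^*\pi(q)}\alpha = 0$ for $\alpha\in\dual{\mathcal{N}}_{Z\vert X}=\ann(\im di)$ and $q\in W$, so $R\subseteq W\cap W^{\perp}=\operatorname{rad}(\ip|_W)$ and the pairing descends to $W/R$.

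Assuming, as is standard for Dirac structures, that $\ip$ is non-degenerate on $\mathcal{Q}$ (so that the adjunction makes $\pi^\dagger$ the transpose of $\pi$), I would next prove the key identity $(\pi^{-1}(\im di))^{\perp}=\pi^\dagger(\ann(\im di))=R$ by the dimension count $\dim\pi^\dagger(\ann U)=\dim\im\pi-\dim(U\cap\im\pi)=\dim(\pi^{-1}U)^{\perp}$ applied to $U=\im di$, the inclusion $\subseteq$ being immediate from the adjunction. Hence $\operatorname{rad}(\ip|_W)=R$ and the induced pairing on $i^\ii\mathcal{Q}=W/R$ is non-degenerate; this simultaneously gives $R^{\perp}=W$.

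The bijection of part (2) is then pure linear algebra. The correspondence theorem for $W\to W/R$ is a bijection between subbundles of $W$ containing $R$ and subbundles of $i^\ii\mathcal{Q}$, via $\mathcal{K}\mapsto\mathcal{K}/R$. I would check it matches maximal isotropics: for $\mathcal{K}$ maximal isotropic in $i^*\mathcal{Q}$ with $\mathcal{K}\subset W$ one has $\mathcal{K}=\mathcal{K}^{\perp}\supset W^{\perp}=R$, and the annihilator computation $(\mathcal{K}/R)^{\perp_{W/R}}=(W\cap\mathcal{K}^{\perp})/R=(W\cap\mathcal{K})/R=\mathcal{K}/R$ shows $\mathcal{K}/R$ is Lagrangian in $i^\ii\mathcal{Q}$; conversely, if $\mathcal{L}=\mathcal{L}^{\perp_{W/R}}$ then its preimage $\mathcal{K}$ is isotropic in $i^*\mathcal{Q}$ (the pairing descends), and $R^{\perp}=W$ forces $\mathcal{K}^{\perp}\subset W$, whence $\mathcal{K}^{\perp}=\mathcal{K}$. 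Since for a maximal isotropic the support condition $\mathcal{K}\subset W$ and the condition $R\subset\mathcal{K}$ are equivalent, this is exactly a bijection of almost Dirac structures, and it also establishes conditions (1) and (2) of part (1) for $i^\ii\mathcal{K}$ (condition (2) being automatic, as the anchor of $i^\ii\mathcal{Q}$ is already $\mathcal{T}_Z$-valued).

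The remaining—and hardest—point is the integrability condition (3) of part (1), together with the fact that the bijection respects integrability. Here I would introduce the restriction-to-$Z$ map $r\colon\{s\in\mathcal{Q}: s|_Z\in W\}\to i^\ii\mathcal{Q}$, $s\mapsto[s|_Z]$, whose source is closed under $\{\ ,\ \}_\mathcal{Q}$ because $\pi$ is a morphism of Leibniz algebras and vector fields tangent to $Z$ along $Z$ form a Lie subalgebra, so $\pi(\{s_1,s_2\})|_Z=[\pi(s_1),\pi(s_2)]|_Z$ is again tangent to $Z$. Granting the compatibility $r(\{s_1,s_2\}_\mathcal{Q})=[r(s_1),r(s_2)]$, a section $s$ restricts into $\mathcal{K}$ iff $r(s)\in\mathcal{K}/R=i^\ii\mathcal{K}$ (since $R\subset\mathcal{K}$), so $\widetilde{\mathcal{K}}$ is closed under $\{\ ,\ \}_\mathcal{Q}$ iff $i^\ii\mathcal{K}$ is closed under $[\ ,\ ]$; this proves condition (3) and shows the bijection of (2) carries genuine Dirac structures to genuine Dirac structures. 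I expect the compatibility of $r$ with the brackets to be the main obstacle: it must be read off from the explicit formula for the bracket of $i^\ii\mathcal{Q}$, verifying that the correction terms $-\iota_\eta dh\otimes q+L_\xi j\otimes p$ and $j\,dh\langle q,p\rangle$ account precisely for the difference between bracketing in $\mathcal{Q}$ and then restricting, and in particular that the result is independent of the chosen extensions of $s_1|_Z,s_2|_Z$ off $Z$.
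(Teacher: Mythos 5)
Your proposal is correct, and its skeleton is exactly the paper's: the printed proof consists of the single observation that the inverse of $\mathcal{K}\mapsto i^\ii\mathcal{K}=\mathcal{K}/R$ is the preimage $\Dirac\mapsto\Dirac+\pi^\dagger(\dual{\mathcal{N}}_{Z\vert X})$, the containments $R\subset\mathcal{K}\subset W$ having already been established in the preamble to the proposition. Everything you add --- the descent of the pairing to $W/R$, the correspondence of maximal isotropics under $W\to W/R$, and especially the two-way transfer of the integrability condition via the restriction map $r$ (which does check out against the explicit bracket formula, taking $h=j=1$ and $\alpha=\beta=0$, with independence of extensions following since $r$ restricts honest sections) --- is verification the paper leaves entirely implicit, and your treatment of condition (3) is genuinely more complete than the one-line proof in the text.

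One deviation should be flagged: you assume $\ip$ is non-degenerate and use $\mathcal{K}=\mathcal{K}^\perp$, whereas the paper's definition of Courant algebroid expressly allows a degenerate pairing (see the footnote there), and $\mathcal{K}=\mathcal{K}^\perp$ additionally presumes split signature. Both crutches are avoidable at no cost. The only perpendicularity identity your bijection actually needs is $R^\perp=W$, and this follows from the adjunction axiom \eqref{adjunction} alone: $q\in R^\perp$ iff $\iota_{i^*\pi(q)}\alpha=0$ for all $\alpha\in\dual{\mathcal{N}}_{Z\vert X}=\ann(\im di)$, iff $i^*\pi(q)\in\ann(\ann(\im di))=\im di$ --- biduality holding because $i$ is an immersion, so $\im di$ is a subbundle --- i.e.\ iff $q\in W$. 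Likewise $R\subset\mathcal{K}$ follows directly from maximality, as the paper's preamble argues: $\langle R,W\rangle=0$ and $\langle R,R\rangle=0$ by \eqref{adjunction}, so $\mathcal{K}+R$ is isotropic and hence equals $\mathcal{K}$. Your stronger claims --- the dimension count giving $W^\perp=R$ and non-degeneracy of the induced pairing on $W/R$ --- are never used in an essential way: in the inverse direction, an isotropic $\mathcal{L}\supseteq\mathcal{K}\supseteq R$ satisfies $\mathcal{L}\subseteq\mathcal{L}^\perp\subseteq R^\perp=W$, and maximality of $\mathcal{K}/R$ in $W/R$ then forces $\mathcal{L}=\mathcal{K}$, exactly as you argue. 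With this substitution your proof covers the proposition in the paper's stated generality.
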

\begin{proof}
Given such a $\Dirac \subset  i^\ii\mathcal{Q}$, its pre-image in $i^+\mathcal{Q}$, i.e. $\Dirac + \pi^\dagger(\dual{\mathcal{N}}_{Z\vert X})$,  is a Dirac structure supported on $Z$.
\end{proof}

For $\phi\in\Hom_{\CA(X)}(\mathcal{Q}_1, \mathcal{Q}_2)$ the graph $\Gamma_\phi$ is a subsheaf of $\mathcal{Q}_1 \times \mathcal{Q}_2$. Since, by definition, $\phi$ induces the identity map on $\mathcal{T}_X$, it follows that $\Gamma_\phi\subset\mathcal{Q}_1 \times_{\mathcal{T}_X} \mathcal{Q}_2$.
Since, by definition, $\phi$ restricts to the identity map on $\Omega^1_X$, it follows that $\Gamma_\phi\cap(\Omega^1_X\times\Omega^1_X)$ is the diagonal. Therefore, the restriction of the map
\[
\mathcal{Q}_1 \times_{\mathcal{T}_X} \mathcal{Q}_2 \to \mathcal{Q}_1\dotplus\mathcal{Q}_2
\]
to $\Gamma_\phi$ is a monomorphism and (the image of) $\Gamma_\phi$ is a Dirac structure in $\mathcal{Q}_1\dotplus\mathcal{Q}_2$. The assignment $\phi \mapsto \Gamma_\phi$ defines a canonical map
\[
\Hom_{\CA(X)}(\mathcal{Q}_1, \mathcal{Q}_2) \to \DirStr(\mathcal{Q}_1\dotplus\mathcal{Q}_2^{op}) .
\]

\subsection{Courant algebroid morphisms (\citep{AX,BIS})}
Suppose that $f\colon Y \to X$ is a map of manifolds, $\mathcal{Q}_X \in \CA(X)$, $\mathcal{Q}_Y \in \CA(Y)$. Let $\pr_X \colon Y\times X \to X$ and $\pr_Y \colon Y\times X \to Y$ denote the projections; let $\gamma_f \colon Y \to Y\times X$ denote the graph embedding $y \mapsto (y, f(y))$.

The sheaf $\pr_Y^*\mathcal{Q}_Y\oplus\pr_X^*\mathcal{Q}^{op}_X$ is endowed with the canonical structure of a Courant algebroid on $Y\times X$ canonically isomorphic to $\pr_Y^\ii\mathcal{Q}_Y\dotplus\pr_X^\ii\mathcal{Q}^{op}_X$.

According to Proposition \ref{prop: dirsupp bj dir} a \emph{Courant algebroid morphism} (\citep{AX,BIS}) $\mathcal{K} \in \DirStr_{\gamma_f(Y)}(\pr_Y^\ii\mathcal{Q}_Y\dotplus\pr_X^\ii\mathcal{Q}^{op}_X)$ corresponds to the Dirac structure $\gamma_f(Y)^\ii\mathcal{K} \in \DirStr(\gamma_f^\ii(\pr_Y^\ii\mathcal{Q}_Y\dotplus\pr_X^\ii\mathcal{Q}^{op}_X)) \cong \DirStr(\mathcal{Q}_Y\dotplus f^\ii\mathcal{Q}^{op}_X)$ and there is a canonical map
\[
\Hom_{\CA(Y)}(\mathcal{Q}_Y, f^\ii\mathcal{Q}_X) \to \DirStr_{\gamma_f(Y)}(\pr_Y^\ii\mathcal{Q}_Y\dotplus\pr_X^\ii\mathcal{Q}^{op}_X) .
\]

\end{document}